\newcommand{\N}{\mathbb{N}}
\newcommand{\R}{\mathbb{R}}
\newcommand{\Z}{\mathbb{Z}}
\newcommand{\ol}{\overline}
\newcommand{\ul}{\underline}
\newcommand{\G}{G_{n,k,b}}
\newcommand{\V}{V_{n,k,b}}
\newcommand{\Om}{\Omega}
\newtheorem{theorem}{Theorem}
\newtheorem{corollary}{Corollary}
\newtheorem{lem}{Lemma}
\newtheorem{prop}{Proposition}
\newtheorem{prob}{Problem}
\DeclareMathOperator*{\argmin}{arg\,min}
\DeclareMathOperator*{\argmax}{arg\,max}
\DeclareMathOperator*{\inte}{int}
\newcommand{\diam}{\operatorname{diam}}
\begin{document}


\title{Bandwidth of graphs resulting from the edge clique covering problem}

\author{Konrad~Engel\thanks{corresponding author, email address:\
konrad.engel@uni-rostock.de},
Sebastian~Hanisch\thanks{email address:\
sebastian.hanisch@uni-rostock.de}\\
{\small University of Rostock, Institute for Mathematics,
    Ulmenstrasse 69, 18057 Rostock, Germany}}
\date{}
 \maketitle

\begin{abstract}
Let $n,k,b$ be integers with $1 \le k-1 \le b \le n$ and let $G_{n,k,b}$ be the graph whose vertices are the $k$-element subsets $X$ of $\{0,\dots,n\}$ with $\max(X)-\min(X) \le b$ and where two such vertices $X,Y$ are joined by an edge if $\max(X \cup Y) - \min(X \cup Y) \le b$.
These graphs are generated by applying a transformation to maximal $k$-uniform hypergraphs of bandwidth $b$ that is used to
reduce the (weak) edge clique covering problem to a vertex clique covering problem.
The bandwidth of $G_{n,k,b}$ is thus the largest possible bandwidth of any transformed $k$-uniform hypergraph of bandwidth $b$.
For $b\geq \frac{n+k-1}{2}$, the exact bandwidth of these graphs is determined. For $b<\frac{n+k-1}{2}$, the bandwidth is asymptotically determined in the case of $b=o(n)$ and in the case of $b$ growing linearly in $n$ with a factor $\beta \in (0,0.5]$, where for one case only bounds could be found. It is conjectured that the upper bound of this open case is the right asymptotic value.
\end{abstract}



\section{Introduction}\label{sec1}
The bandwidth problem for graphs is to find a labelling of the vertices with different integers, such that the maximum absolut value of the difference of the labels of two adjacent vertices is minimal.
There are many applications such as efficient storage of sparsely populated symmetric matrices, which arise e.g. from discretization of partial differential equations,
cf. \cite{Pissanetzky1984}.
Several other applications, including the placement problem for modules of a VLSI design, the binary constraint satisfaction problem and the minimization of effects of noise in the multichannel communication of data are discussed e.g. in \cite{CCDG, BerR02, Chu88}. The bandwidth problem was shown to be NP-hard \cite{BW_NPc} and even an approximation with a ratio better than $2$ is NP-hard \cite{DuFeUn2010}, so several heuristics such as the Cuthill-McKee-Algorithm \cite{cuthill} or some similar approaches, cf. \cite{GPS-Algorithm}, are very popular in applications. However, for some graph classes the exact bandwidth is known. These include the path, the cycle, the complete graph, the complete bipartite graph \cite{bwdiamb}, the hypercube \cite{hypercube}, the grid graph \cite{labeltwopath}, special Hamming graphs \cite{HenS92} and several others, cf. \cite{CCDG}. However, there are still many graphs, where the exact bandwidth is unknown, such as the general Hamming graphs, cf. \cite{Har03, BBHS08}. In this paper, we consider graphs $\G$, $1 \le k-1 \le b \le n$, whose vertices are those $k$-element subsets of $\{0,\dots,n\}$, for which the difference of the maximum and the minimum is at most $b$. There is an edge between two vertices, if the difference of the maximum and the minimum of the union of the corresponding sets is at most $b$. We start by introducing the necessary notation and a motivation in Section \ref{sec2} and study some basic properties in Section \ref{170}. Based on that, we determine the exact bandwidth for these graphs in the case of $b\geq \frac{n+k-1}{2}$ in Section \ref{sec3}. In Section \ref{sec4}, we present some asymptotic results for $n\to \infty$ in the case of $b=o(n)$. The results of Sections \ref{sec3} and \ref{sec4} are summarized by the following theorem:
\begin{theorem}\label{thm1}
Let $k$ be a fixed positive integer and $1 \le k-1 \le b \le n$.
\begin{enumerate}[label=\alph{enumi})]
\item If $b\geq\frac{n+k-1}{2}$ then
\[
B(G_{n,k,b})=  \left\lceil\frac{(n+1)\binom{b}{k-1}-(k-1)\binom{b+1}{k}+\binom{2b-n+1}{k}-2}{2}\right\rceil.
\]
\item If $b=o(n^{\frac{1}{k+1}})$ then for sufficiently large $n$
\[
B(G_{n,k,b})=k\binom{b}{k}.
\]
If $b=o(n)$ then
\[
B(G_{n,k,b})\sim k\binom{b}{k} \text{ as } n\rightarrow \infty.
\]
\end{enumerate}
\end{theorem}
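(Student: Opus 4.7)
The proof naturally splits into the two regimes $b\ge (n+k-1)/2$ (part (a), dense) and $b=o(n)$ (part (b), sparse), handled in Sections \ref{sec3} and \ref{sec4} respectively.

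For part (a) the key observation is that $\G$ has a distinguished set of \emph{universal} vertices: $X$ is adjacent to every other vertex if and only if $\max(X)\le b$ and $\min(X)\ge n-b$, because these are the constraints forced by the two extremal non-universal neighbours $\{0,\ldots,k-1\}$ and $\{n-k+1,\ldots,n\}$. The number of such universal vertices is $c:=\binom{2b-n+1}{k}$, which is positive precisely when $b\ge(n+k-1)/2$. In any labeling of bandwidth $B$, the $c$ universal vertices occupy $c$ distinct labels $\ell_1<\cdots<\ell_c$ with $\ell_c-\ell_1\ge c-1$, while every other vertex has a label within $B$ of each universal one; combining with the fact that labels $0$ and $|V_{n,k,b}|-1$ must both be reached, one obtains $B\ge\lceil(|V_{n,k,b}|+c-2)/2\rceil$. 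Using $|V_{n,k,b}|=(n-b+1)\binom{b}{k-1}+\binom{b}{k}$ and the identity $(n-b+1)\binom{b}{k-1}+\binom{b}{k}=(n+1)\binom{b}{k-1}-(k-1)\binom{b+1}{k}$ (a direct consequence of $k\binom{b}{k}=(b-k+1)\binom{b}{k-1}$), this matches the formula in (a). For the upper bound I would place the $c$ universal vertices at the middle labels and arrange the remaining vertices symmetrically on either side in an order consistent with the interval structure, then verify that every edge respects the claimed bandwidth.

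For part (b), the lower bound comes from Chv\'atal's inequality $B(\G)\ge\lceil(|V_{n,k,b}|-1)/\diam(\G)\rceil$. Using the vertex count above together with $\diam(\G)=\lceil(n-k+1)/(b-k+1)\rceil$ --- obtained by noting that an adjacency can advance $\min(X)$ by at most $b-k+1$ (since $X\cup Y$ must fit in a window of length $b$ and $Y$ needs $k$ elements), with equality realised by the chain $X_j=\{j(b-k+1),\ldots,j(b-k+1)+k-1\}$ --- an algebraic manipulation shows the Chv\'atal quotient tends to $k\binom{b}{k}$. The quantitative hypothesis $b=o(n^{1/(k+1)})$ is precisely what makes the correction terms smaller than one, so that the ceiling equals $k\binom{b}{k}$ for sufficiently large $n$, while the weaker $b=o(n)$ yields only the asymptotic equality.

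The matching upper bound is the lexicographic labeling: order the vertices by $\min(X)$ first and then lex on the remaining $k-1$ elements. If $X$ lies in layer $m$ (i.e.\ $\min(X)=m$) and $Y$ is a neighbour in layer $m+j$ with $j\ge 1$, then $\max(Y)\le m+b$ is forced by adjacency, placing $Y$ among the first $\binom{b-j}{k-1}$ vertices of layer $m+j$. Hence the label gap is at most $j\binom{b}{k-1}+\binom{b-j}{k-1}-1$, a quantity monotone in $j$ whose maximum at $j=b-k+1$ equals $(b-k+1)\binom{b}{k-1}=k\binom{b}{k}$; this matches the lower bound in the main range and is bounded by $(1+o(1))k\binom{b}{k}$ throughout $b=o(n)$. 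The main obstacle I anticipate is constructing and verifying the labeling in part (a): although the universal-vertex lower bound is clean, the cross-edges between left and right non-universal vertices (which genuinely exist in the dense regime) force a careful placement strategy to keep every gap within the bandwidth budget.
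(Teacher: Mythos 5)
Your lower bound for part (a) and your whole treatment of part (b) essentially coincide with the paper's arguments: the universal-vertex set $C$ of size $\binom{2b-n+1}{k}$, the estimate $B\ge\lceil(|V_{n,k,b}|+|C|-2)/2\rceil$ obtained by pairing the vertices with extreme labels against the extreme labels inside $C$, the Chv\'atal bound over the diameter $\lceil(n-k+1)/(b-k+1)\rceil$, and the lexicographic upper bound $k\binom{b}{k}$ are exactly what is done in Sections \ref{sec3} and \ref{sec4}. Your layer-by-layer count $j\binom{b}{k-1}+\binom{b-j}{k-1}-1$ is a slightly finer version of the paper's bracketing $X'\le_{lex}X<_{lex}Y\le_{lex}Y'$, and both give $k\binom{b}{k}$. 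One logical point to repair in part (b): Chv\'atal's inequality needs an \emph{upper} bound on the diameter (every pair of vertices within $\lceil(n-k+1)/(b-k+1)\rceil$ steps), whereas your ``an adjacency advances $\min$ by at most $b-k+1$'' argument together with the explicit chain only determines the distance between the two extreme intervals; the paper supplies the missing direction in Corollary \ref{193} by routing an arbitrary pair of vertices through a path of intervals. This is routine, but it is a needed step.

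The genuine gap is the upper bound in part (a), which you yourself flag as the main obstacle: you do not construct, let alone verify, a numbering achieving $\lceil(|V_{n,k,b}|+|C|-2)/2\rceil$, and this is the technical heart of the result. The paper's construction (the ``simple palindrome ordering'') is: split $R=V_{n,k,b}\setminus C$ into $R_0$ (vertices with $\underline{X}+\overline{X}<n$) and $R_1$ (vertices with $\underline{X}+\overline{X}>n$), distributing the antidiagonal tie set $\underline{X}+\overline{X}=n$ between the two sides so that $\bigl||R_0|-|R_1|\bigr|\le 1$; then order $V_{n,k,b}$ as $R_0\oplus C\oplus R_1$ with $R_0$ lexicographic and $R_1$ lexicographic on the \emph{reversed} tuples. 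The verification (Lemma \ref{lem2}) that the maximal gap is attained at the edge $\{[0,k-1],[b-k+1,b]\}$ or at $\{[n-b,n-b+k-1],[n-k+1,n]\}$ proceeds by the shifting injection $(i_1,\dots,i_k)\mapsto(i_1-1,\dots,i_k-1)$ between the order-intervals spanned by consecutive extremal edges, with a case analysis according to which of $R_0,C,R_1$ the image lands in; the balanced tie-split and the reversal on $R_1$ are precisely what make the two candidate maximal gaps differ by at most one, so that their average $\lceil(|V_{n,k,b}|+|C|-2)/2\rceil$ is achieved. Your instinct that the cross-edges between the two sides force a careful placement is correct, but without carrying out this construction and its verification, part (a) is only half proved.
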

Sections \ref{451} to \ref{1122} discuss the case $b\sim\beta n$ with $\beta \in (0,0.5]$. The main result is given by the next theorem:
\begin{theorem}\label{thm2}
Let $k\ge 2$ be a fixed positive integer, but $n\to \infty$.  Let $b\sim \beta n$ and let $1=q\beta +r$, where $q\ge 2$ is a positive integer and $0\leq r<\beta$.
Let
\begin{align*}
c_1(\beta,k)&=\frac{\beta^k}{k!} \left(k-\frac{k-1}{q}\right),\\
c_2(\beta,k)&=\frac{\beta^{k-1}}{(q+1)k!} \left(k-(k-1)\beta\right),\\
c_3(\beta,k)&=\frac{(\beta-r)^k}{(q+1)k!} q^{k-1}.
\end{align*}

\begin{enumerate}[label=\alph{enumi})]
\item If $r\leq \frac{q-1}{q^2+q-1}$ then
$B(G_{n,k,b})\sim c_1(\beta,k) n^k.$
\item If $r> \frac{q-1}{q^2+q-1}$ then
$\max\{c_1(\beta,k), c_2(\beta,k)+\frac{1}{q^{k-1}}c_3(\beta,k)\}n^k \lesssim B(\G)\lesssim
(c_2(\beta,k)+c_3(\beta,k))n^k.$
\end{enumerate}
\end{theorem}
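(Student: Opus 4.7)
The aim is to establish matching (up to the gap in case (b)) upper and lower asymptotic bounds on $B(\G)$. Since $|V(\G)|=\Theta(n^k)$ when $b\sim \beta n$, it is natural to expect $B(\G)=\Theta(n^k)$ and to target the sharp leading constant.

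For the upper bounds I would exhibit two explicit numberings of the vertex set and keep the better one. The first orders the vertices by $\min X$, breaking ties by a canonical secondary rule on the remaining $k-1$ elements, in the spirit of the numbering used for the $b=o(n)$ regime of Theorem \ref{thm1}. A direct count of the vertices lying strictly between two adjacent $X,Y$ in this order, replacing sums by integrals, should produce worst-case label difference $\sim c_1(\beta,k)\,n^k$. The second numbering first partitions $\{0,\dots,n\}$ into $q+1$ consecutive blocks of length $\approx \beta n$ (the last of length $\approx(\beta-r)n$), orders vertices by the block containing $\min X$, and applies a standard inner rule inside each block. Here I expect the maximum label difference to be $\sim (c_2(\beta,k)+c_3(\beta,k))\,n^k$, with $c_2$ coming from edges entirely inside a full block and $c_3$ from edges that meet the short leftover block. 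The upper bound claimed in the theorem is then $\min(c_1,c_2+c_3)n^k$; the threshold $r=(q-1)/(q^2+q-1)$ is precisely the crossover $c_1=c_2+c_3$.

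For the lower bounds I would rely on the classical boundary/isoperimetric inequality: for any $S\subseteq V(\G)$ with $|S|\le |V(\G)|/2$, $B(\G)\ge |\partial S|$, where $\partial S$ denotes the outer vertex boundary. To recover $c_1(\beta,k)n^k$, I take $S$ to be an initial segment of size $|V(\G)|/2$ in the first ordering above and estimate the number of vertices $Y\notin S$ having a neighbour in $S$; this reduces to a Riemann-sum computation on the simplex of $k$-tuples in an interval of length $\beta$ and reproduces $c_1$. To recover the bound $(c_2+q^{-(k-1)}c_3)n^k$, a more refined witness set adapted to the $(q+1)$-block partition is needed; the factor $q^{-(k-1)}$ reflects that the best boundary set the method finds is essentially supported on $k$-subsets confined to a single block.

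The main obstacle is closing the gap in case (b): any numbering seems forced to pay the full $c_3$-contribution from edges entering the short leftover block, whereas the boundary method only recovers a $q^{-(k-1)}$ fraction of it. Bridging this would require either an improved labelling exploiting the global interaction of all $q+1$ blocks (if the lower bound is sharp) or a sharper isoperimetric estimate for sets of $k$-subsets that spread across several blocks (if the upper bound is sharp, as the authors conjecture). The arithmetic nature of the threshold $(q-1)/(q^2+q-1)$ suggests that such an extremal analysis must distinguish cases according to how the elements of $X$ are distributed among the blocks, and it is plausibly this combinatorial case-explosion that blocks a clean resolution.
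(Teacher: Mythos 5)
There are two genuine gaps here. First, your primary upper-bound numbering --- ordering by $\min X$ with a secondary tie-breaking rule --- cannot achieve $c_1(\beta,k)n^k$. Between the lex-first vertex with minimum $i$ and the lex-first vertex with minimum $i+b-k+1$ lie all $(b-k+1)\binom{b}{k-1}=k\binom{b}{k}\sim\frac{k\beta^k}{k!}n^k$ vertices whose minimum falls in $[i,i+b-k]$, and adjacent pairs realizing essentially this gap exist; so any such numbering gives only $\frac{k\beta^k}{k!}n^k$, which exceeds $c_1(\beta,k)n^k=\frac{\beta^k}{k!}\left(k-\frac{k-1}{q}\right)n^k$ by the positive amount $\frac{k-1}{q}\frac{\beta^k}{k!}n^k$. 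The paper's construction is essentially different: it maps $X\mapsto\frac1n(\ul X,\ol X)$ into the triangle $\{0\le x\le y\le 1\}$ and sweeps the strip $\{y\le x+\beta\}$ by a family of segments that alternates between parallel translates (in the quadrangles $C_iB_{i+1}D_{i+1}E_i$) and fans of rays through the points $A_i$ (in $B_iC_iE_iD_i$); the fans are precisely what compress the swept slab from measure $k\beta^k/k!$ down to $c_1$. A similar remark applies to your block ordering for case (b) (whose block lengths, as written, do not sum to $n$): the paper's ordering there is built from hexagons and rays through $(0,1)$, not from a block order composed with an inner rule, and no verification that a block scheme attains $c_2+c_3$ is offered.

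Second, the lower-bound method is logically invalid as stated. The inequality $B(G)\ge|\partial S|$ does not hold for an arbitrary $S$ with $|S|\le|V|/2$; the correct Harper-type bound is $B(G)\ge\min_{|S|=m}|\partial S|$, so computing the boundary of one initial segment of your own ordering yields an \emph{upper} bound on the isoperimetric minimum, not a lower bound on the bandwidth. Making this route work would require solving a vertex-isoperimetric problem over all subsets of $\V$ of a given size, which the proposal does not address. The paper sidesteps this entirely with the Chv\'atal bound $B(G')\ge(|V'|-1)/\diam(G')$ applied to induced subgraphs: for $c_1$, the subgraph on all $X$ with $\ol X\le q\beta(1-\varepsilon)n$ has diameter at most $q$ and $\sim qc_1n^k$ vertices; for $c_2+q^{-(k-1)}c_3$, a three-case argument locating the vertices labelled $1$ and $|\V|$ relative to the small triangle $B_1C_1H_1$. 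Your observation that the threshold $r=(q-1)/(q^2+q-1)$ is exactly the crossover $c_1=c_2+c_3$ is correct and is a useful sanity check, but neither the upper nor the lower bounds are actually established by the proposal as it stands.
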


The part b) gives only bounds instead of an exact asymptotic value. We strongly conjecture that the RHS bound is the right value.
The bounds are not too far away from each other because
\[
\frac{c_2(\beta,k)}{c_3(\beta,k)} = \left(\frac{\beta}{q\beta-qr}\right)^k \frac{q}{\beta} (k-(k-1) \beta) \ge \left(\frac{\beta}{q\beta-qr}\right)^k \frac{k(q-1)+1}{\beta} \ge 6
\]
since $\beta > q\beta-qr$ iff $r >\frac{q-1}{q^2+q-1}$ and $ k \ge 2, q \ge 2, \beta \le \frac{1}{2}$.

Let $U=\{\beta \in (0,0.5]: r > \frac{q-1}{q^2+q-1}\}$ be the set of numbers $\beta$ for which part b) applies and thus the exact asymptotic value is still unknown. Note that $r > \frac{q-1}{q^2+q-1}$ iff $\frac{1}{q+1} < \beta < \frac{q}{q^2+q-1}$. Thus the Lebesgue measure of $U$ is equal to $\sum_{q=2}^{\infty} (\frac{q}{q^2+q-1}-\frac{1}{q+1}) = 0.119\dots$, i.e., for the ``majority'' of numbers $\beta \in (0,0.5]$ the exact value is known.

The proof of Theorem \ref{thm2} is based on a reduction to a continuous problem on the unit square $[0,1]^2$. Riemann integrals and elementary geometric arguments suffice. The embedding into a more difficult continuous problem on the unit cube was used by Harper \cite{Har99} to obtain bounds for the bandwidth of Hamming graphs. Also for the edge-bandwidth of multidimensional grids and Hamming graphs (the bandwidth of the line graph of these graphs) Harper's reduction to the unit cube was applied in \cite{AJM08}. Asymptotic bounds for the bandwidth of the $d$-ary de Bruijn graph were obtained in \cite{Pel93} by an approach based on the use of a continuous domain.

\section{Notation and motivation}\label{sec2}
Let $[n]=\{1,\dots, n\}$ and $[i,j]=\{i,i+1,\dots,j-1,j\}$ with $i,j\in \Z, \, i\leq j$. In particular, $[0,n]=\{0,1,\dots,n\}$.
For a graph $G=(V,E)$ with $|V|=n$ vertices, a \emph{proper numbering} of $G$ is a bijection $f:V\to [n]$. For two vertices $u,v\in V$, we call $d_f(u,v)=|f(u)-f(v)|$ the \emph{$f$-distance} of $u$ and $v$. Let $f$ be a proper numbering of a graph $G$. The \emph{bandwidth of} $f$, denoted $B_f(G)$, is given by $B_f(G)=\max\{|f(u)-f(v)|:\, \{u,v\}\in E\}$, i.e., the maximal $f$-distance. The \emph{bandwidth} of $G$ is defined by $B(G)=\min\{B_f(G): \, f \text{ is a proper numbering of } G\}$. A \emph{bandwidth numbering} of $G$ is a numbering $f$ such that $B(G)=B_f(G)$.
This definition can be easily generalized to hypergraphs $H=(V,E)$. There we have $B_f(H)=\max\{|f(u)-f(v)|:\, \exists e \in E \text{ with } u,v\in e\}$.

Now we formally define the subject of our study. Let $k$ and $b$ be positive integers with $b \ge k-1$. For $A \subseteq [0,n]$ let $\underline{A}=\min(A)$ and $\overline{A}=\max(A)$. Further let $\binom{[0,n]}{k}=\{X \subseteq [0,n]: |X|=k\}$. Then $G_{n,k,b}$ is the graph with vertex set
\[
V_{n,k,b}=\left\{X \in \binom{[0,n]}{k}: \overline{X}- \underline{X}\leq b\right\}
\]
and edge set
\[
E_{n,k,b}=\left\{\{X,Y\} \in V_{n,k,b}: \overline{X\cup Y}- \underline{X\cup Y}\leq b\right\}.
\]
These graphs arise in the following context:
Let $G=(V,E)$ be a graph. A \emph{clique} is a subset of $V$ that induces a complete subgraph of $G$.
Consider the following transformation, which was used in the NP-completeness proof of the edge clique covering problem in \cite{NPc} by reducing it to the vertex clique covering problem. Let $V=[n]$. Then $\tilde{G}=(\tilde{V},\tilde{E})$ is constructed by setting $\tilde{V}=E$ and $\{\{i,j\},\{i',j'\}\}\in \tilde{E}$, if there is a clique $C\subseteq V$ with $i,j,i',j'\in C$. Let $\overline{\chi}_v(G)$ (resp. $\overline{\chi}_e(G)$) be the \emph{vertex clique covering number} (resp. \emph{edge clique covering number}) of the graph $G=(V,E)$, i.e.,  the minimal number of cliques whose induced subgraphs cover all vertices (resp. edges) of $G$. It can be shown, that $\overline{\chi}_e(G)=\overline{\chi}_v(\tilde{G})$, which is the essential part of the NP-completeness proof for the edge clique covering problem, since the transformation can be done in polynomial time. Here we want to generalize this result for hypergraphs $H=(V,E)$. We consider the \emph{2-section graph} of $H$, i.e., the graph $G_H=(V,E_H)$ on the vertex set of $H$ where $\{u,v\} \in E_H$ if there is an edge of $H$ containing $u$ and $v$. A subset $C$ of $V$ is called a \emph{weak clique} of $H$ if $C$ is a clique in $G_H$. A \emph{weak edge clique covering} of $H$ is a family ${\cal C}$ of weak cliques of $H$ such that for all $e\in E$ there is some $C\in {\cal C}$ with $e\subseteq C$. The \emph{weak edge clique covering number} of $H$ is the smallest size $\overline{\chi}_e(H)$ of a weak edge clique covering of $H$. It will turn out that the computation of $\overline{\chi}_e(H)$ can be simplified by the following transformation. The \emph{weak edge clique graph} of $H$ is the graph $\tilde{G}_H=(\tilde{V},\tilde{E})$ where $\tilde{V}=E$ and two vertices of $\tilde{G}_H$, i.e., edges $e,e'$ of $H$, are adjacent if there is a weak clique $C$ of $H$ containing $e$ and $e'$ as subsets. With these definitions we prove the following proposition.
\begin{prop}\label{prop_gamma}
Let $H=(V,E)$ be a hypergraph. Then
\[
\overline{\chi}_e(H)=\overline{\chi}_v(\tilde{G}_H).
\]
\end{prop}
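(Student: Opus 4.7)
The plan is to prove both inequalities $\overline{\chi}_v(\tilde{G}_H)\le \overline{\chi}_e(H)$ and $\overline{\chi}_e(H)\le\overline{\chi}_v(\tilde{G}_H)$ by passing back and forth between covers, exactly mimicking the classical graph-theoretic argument of \cite{NPc} but handling the fact that a weak clique of $H$ need not itself be an edge of $H$.

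For the first inequality, I would start with an optimal weak edge clique cover $\{C_1,\dots,C_m\}$ of $H$ with $m=\overline{\chi}_e(H)$. For each $i$ set $\tilde C_i=\{e\in E: e\subseteq C_i\}$. The family $\{\tilde C_i\}$ covers $\tilde V=E$ since every $e\in E$ is contained in some $C_i$ by assumption, and each $\tilde C_i$ is a clique of $\tilde{G}_H$ directly from the definition of adjacency in $\tilde{G}_H$: any two edges $e,e'\subseteq C_i$ are contained in the common weak clique $C_i$. Hence $\overline{\chi}_v(\tilde{G}_H)\le m$.

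For the reverse inequality, start with an optimal vertex clique cover $\{\tilde C_1,\dots,\tilde C_m\}$ of $\tilde G_H$ with $m=\overline{\chi}_v(\tilde{G}_H)$, where each $\tilde C_i\subseteq E$. Define $C_i=\bigcup_{e\in\tilde C_i} e\subseteq V$. Then every $e\in\tilde C_i$ satisfies $e\subseteq C_i$, so once I check that each $C_i$ is a weak clique of $H$ the family $\{C_1,\dots,C_m\}$ will be a weak edge clique cover of $H$ (every edge of $H$ lies in some $\tilde C_i$, hence inside the corresponding $C_i$), giving $\overline{\chi}_e(H)\le m$. To verify that $C_i$ is a weak clique, pick any $u,v\in C_i$, choose $e_1,e_2\in\tilde C_i$ with $u\in e_1$ and $v\in e_2$; since $\tilde C_i$ is a clique of $\tilde G_H$, there is a weak clique $C'$ of $H$ with $e_1\cup e_2\subseteq C'$, so $u,v\in C'$, and because $C'$ is a clique in $G_H$ there is an edge of $H$ containing both $u$ and $v$. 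This places the pair $\{u,v\}$ in $E_H$, which is exactly the condition for $C_i$ to be a weak clique.

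The only subtle point, which is where I would be most careful, is the verification in the reverse direction that $C_i$ is a weak clique rather than merely a union of edges; one must remember that adjacency in $\tilde{G}_H$ is defined through the existence of a containing weak clique (not through set-theoretic intersection of edges), so the witness $C'$ above is exactly the object the definition supplies. Handling the degenerate cases $|\tilde C_i|=1$ (take $C_i=e$ itself, which is trivially a weak clique) and possible empty $\tilde C_i$ (discard) poses no difficulty. Combining both inequalities yields the claimed equality.
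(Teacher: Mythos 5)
Your proposal is correct and follows essentially the same route as the paper's proof: both directions pass between covers via $\tilde C=\{e\in E: e\subseteq C\}$ and $C=\bigcup_{e\in\tilde C}e$, with the key verification being that the union of the edges in a clique of $\tilde G_H$ is a weak clique of $H$. The only cosmetic difference is that the paper explicitly separates the case where $u,v$ lie in a single edge of $\tilde C_i$ from the case of two distinct edges, which your argument covers implicitly.
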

\proof
Let ${\cal C}$ be a weak edge clique covering of $H$ of size $\overline{\chi}_e(H)$. For each $C\in {\cal C}$ let $\tilde{C}=\{e \in E: \, e\subseteq C\}$. Then $\tilde{C}$ is a clique in $\tilde{G}_H$ and ${\cal \tilde{C}}=\{\tilde{C}:\, C\in {\cal C}\}$ is a vertex clique covering of $\tilde{G}_H$. Consequently $\overline{\chi}_e(H)\geq \overline{\chi}_v(\tilde{G}_H)$.
Now let ${\cal \tilde{C}}$ be a vertex clique covering of $\tilde{G}_H$ of size $\overline{\chi}_v(\tilde{G}_H)$. For each $\tilde{C}\in {\cal \tilde{C}}$ let $C=\bigcup_{e\in \tilde{C}} e$. Then $C$ is a weak clique in $H$. To verify this fact, we pick two arbitrary vertices $x,y\in C$ and show that they are adjacent in $G_H$. First we consider the case that there is an edge $e\in \tilde{C}$ with $x,y\in e$. Then, by construction of $G_H$, $\{x,y\}\in E_H$. The alternative is, that there are 2 edges $e,e'\in \tilde{C}$ with $x\in e$ and $y\in e'$. Since $e$ and $e'$ are adjacent in $\tilde{G}_H$ there is a weak clique of $H$ containing both edges, which implies the adjacency of $x$ and $y$. Moreover, ${\cal \tilde{C}}$ is a weak edge clique covering of $H$. Consequently, $\overline{\chi}_v(\tilde{G}_H) \geq \overline{\chi}_e(H)$.
\qed

Thus, from an algorithmic point of view, it is enough to study the vertex clique covering problem. For bounded bandwidth, and more generally for bounded treewidth, there is a linear time dynamic programming algorithm for the solution \cite{CliqueCover2012}.
In an application, which will be described below, we were lead to the weak edge covering problem on a hypergraph whose bandwidth is small (and thus, theoretically, considered as bounded). This implies the following question: Given a hypergraph $H$ of bandwidth $b$, how large can be the bandwidth of the weak edge clique graph $\tilde{G}_H$ of $H$?
Here we discuss only $k$-uniform hypergraphs though many results can be simply generalized to hypergraphs whose edges have size at most $k$. For later computations, it is more suitable to take $[0,n]$ as the vertex set of $H$ instead of $[1,n]$. If, without loss of generality, $f(i)=i$ is the bandwidth numbering of $H$ then, obviously, $\tilde{G}_H$ has maximal bandwidth if $H$ contains all $k$-element subsets $X$ of $[0,n]$ with $\ol{X}-\ul{X} \le b$. In this case, $\tilde{G}_H$ is exactly the graph $\G$, which motivates the study of $\G$.

We came to these questions in the study of multielectrode recordings of neuronal signals, so-called spikes. Such recordings are carried out on multielectrode arrays, which can be used in-vivo or in-vitro. The denser the electrodes are placed the more likely it is for the neurons to be simultaneously recorded at different electrodes.
The resulting similarities in the recordings of the electrodes can provide useful information. In \cite{EnHa1} we developed an algorithm to estimate the (unknown) neighborhood of a neuron, i.e., the set of electrodes which record the signals of this neuron.
Such neighborhood information is also used as an additional tool in \cite{Prentice2011} for the so called \emph{spike sorting}, which is an estimated assignment of the recorded signals to the neurons.

Fix a short time interval in which several electrodes record signals. We consider these electrodes as vertices of a graph, which we call \emph{similarity-graph} for the fixed time interval.
First we mention that some neurons may always spike simultaneously. We combine such a set of neurons to one (artificial) new neuron. It might be an accident that two electrodes record a signal at almost the same time, but the simultaneous recording can also be caused by the fact that one spiking neuron has contact to both electrodes. Thus we do not test only one short time interval but several such intervals. If there are sufficiently many simultaneous recordings of two (or $k$) fixed electrodes, one may expect that these recordings are indeed caused by only one neuron and thus we draw an edge (hyperedge) between the corresponding vertices in the similarity-graph.

By algorithmic reasons, it is easier to check only pairs of electrodes, see \cite{EnHa1}.
But, with some more effort, also $k$-element subsets of electrodes could be checked for similarities if $k$ is small. This leads to edges and hyperedges of electrodes.
If a spiking neuron has contact to an unknown set $S$ of electrodes, all edges between any two vertices of $S$ (all hyperedges of any $k$ vertices of $S$) are drawn in the similarity-graph. Though these edges may also be caused by different simultaneously spiking neurons having contact in each case to two (or $k$) neurons, it is more likely that only one neuron is the source. Such a neuron yields the edges of a weak clique in the similarity-graph. Once the similarity-graph is constructed, it remains the question what is the basic cause for this graph. A reasonable answer is that as few as possible neurons yield the graph. Consequently, a minimum weak edge clique covering has to be determined. Because of the bounded length of the axons, only nearby electrodes, which are placed in form of a two-dimensional bounded grid (or some similar variants), may have contact to the same neuron. Hence the similarity-graph is a relatively sparse graph and edges are only drawn between electrodes which have a small Euclidean distance. Thus it is reasonable to expect that also this graph has a small bandwidth.

\section{Some basic properties}
\label{170}
\begin{lem}
\label{164}
Let $X$ and $Y$ be two distinct vertices of $G_{n,k,b}$. They are adjacent iff $\ol{X}-\ul{Y}\le b$ and $\ol{Y}-\ul{X} \le b$.
\end{lem}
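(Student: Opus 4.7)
The plan is to simply unfold the definition of the edge set and exploit the identity $\overline{X\cup Y} = \max(\overline{X},\overline{Y})$ together with $\underline{X\cup Y} = \min(\underline{X},\underline{Y})$, which holds because the max (resp.\ min) of a union equals the larger (resp.\ smaller) of the maxes (resp.\ mins) of the parts.

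First, I would rewrite the adjacency condition $\overline{X\cup Y}-\underline{X\cup Y}\le b$ as
\[
\max(\overline{X},\overline{Y})-\min(\underline{X},\underline{Y})\le b.
\]
Next, I observe that an inequality of the form $\max(a_1,a_2)-\min(c_1,c_2)\le b$ is equivalent to the conjunction of the four pairwise inequalities $a_i-c_j\le b$ for $i,j\in\{1,2\}$; one direction is immediate, and the other follows since $\max(a_1,a_2)-\min(c_1,c_2)$ equals $a_{i^*}-c_{j^*}$ for the indices $i^*,j^*$ attaining max and min. Applied to $a_i=\overline{X},\overline{Y}$ and $c_j=\underline{X},\underline{Y}$, the adjacency condition becomes the conjunction of
\[
\overline{X}-\underline{X}\le b,\quad \overline{Y}-\underline{Y}\le b,\quad \overline{X}-\underline{Y}\le b,\quad \overline{Y}-\underline{X}\le b.
\]

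Finally, I would use that $X$ and $Y$ are vertices of $G_{n,k,b}$, which by definition of $V_{n,k,b}$ forces $\overline{X}-\underline{X}\le b$ and $\overline{Y}-\underline{Y}\le b$ for free. Hence the adjacency condition is equivalent to the two remaining cross inequalities $\overline{X}-\underline{Y}\le b$ and $\overline{Y}-\underline{X}\le b$, which is exactly the claim. There is no real obstacle here; the lemma is a straightforward reformulation of the definition, and the only thing to be careful about is the equivalence in the middle step, but that is an elementary statement about max/min.
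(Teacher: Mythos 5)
Your proposal is correct and follows essentially the same route as the paper: both reduce adjacency to $\max\{\overline{X},\overline{Y}\}-\min\{\underline{X},\underline{Y}\}\le b$, expand this as the conjunction of the four pairwise differences, and discharge $\overline{X}-\underline{X}\le b$ and $\overline{Y}-\underline{Y}\le b$ using membership in $V_{n,k,b}$. No issues.
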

\proof Let $X$ and $Y$ be adjacent. Then $\ol{X}-\ul{Y}\le \ol{X \cup Y} - \ul{X \cup Y}\le b$ and, analogously, $\ol{Y}-\ul{X}\le b$.

Now let $\ol{X}-\ul{Y}\le b$ and $\ol{Y}-\ul{X} \le b$. Then $\ol{X \cup Y} - \ul{X \cup Y} = \max\{\ol{X},\ol{Y}\}-\min\{\ul{X},\ul{Y}\} = \max\{\ol{X}-\ul{X},
\ol{X}-\ul{Y}, \ol{Y}-\ul{X}, \ol{Y}-\ul{Y}\} \le b.$
\qed

Note that $[i,i+k-1] \in \V$ iff $0 \le i \le n-(k-1)$ and that for $0 \le i < j  \le n-(k-1)$ the vertices $[i,i+k-1]$ and $[j,j+k-1]$ are adjacent iff $j \le i+b-(k-1)$.

\begin{lem}
\label{175}
Let $0 \le i < j  \le n-(k-1)$. The vertices $[i,i+k-1]$ and $[j,j+k-1]$ have distance $\lceil \frac{j-i}{b-k+1} \rceil$ in $\G$.
\end{lem}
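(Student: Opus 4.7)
I would prove both a lower bound and a matching upper bound on the distance.

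\textbf{Upper bound (construction).} Set $s:=b-k+1$ and $t:=\lceil (j-i)/s\rceil$. I would exhibit the explicit path of interval vertices
\[
X_0=[i,i+k-1],\ X_1=[i+s,i+s+k-1],\ \dots,\ X_{t-1}=[i+(t-1)s,i+(t-1)s+k-1],\ X_t=[j,j+k-1].
\]
All of these lie in $\V$: the maximum entry never exceeds $j+k-1\le n$, and by the remark preceding the lemma, consecutive interval vertices $[p,p+k-1]$ and $[p',p'+k-1]$ are adjacent whenever $|p'-p|\le s$. For the first $t-1$ jumps this gap equals $s$, and the final jump has size $(j-i)-(t-1)s$, which is positive (because $t-1 < (j-i)/s$) and at most $s$ (because $t\ge (j-i)/s$). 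So the path is legal and has length $t$.

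\textbf{Lower bound (step-size estimate on $\underline{X}$).} Let $X_0,X_1,\ldots,X_t$ be any path in $\G$ with $X_0=[i,i+k-1]$ and $X_t=[j,j+k-1]$. The crucial observation is that for any edge $\{X_s,X_{s+1}\}$, Lemma \ref{164} gives $\overline{X}_{s+1}-\underline{X}_s\le b$, and since every vertex has $k$ elements we have $\underline{X}_{s+1}\le \overline{X}_{s+1}-(k-1)$. Combining,
\[
\underline{X}_{s+1}-\underline{X}_s \;\le\; b-(k-1) \;=\; s,
\]
and the symmetric inequality from $\overline{X}_s-\underline{X}_{s+1}\le b$ yields $|\underline{X}_{s+1}-\underline{X}_s|\le s$. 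Telescoping along the path,
\[
j-i \;=\; \underline{X}_t-\underline{X}_0 \;\le\; \sum_{s=0}^{t-1}|\underline{X}_{s+1}-\underline{X}_s| \;\le\; ts,
\]
so $t\ge (j-i)/s$, and since $t$ is an integer, $t\ge \lceil (j-i)/s\rceil$.

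The main obstacle is the lower bound, because a priori a path may pass through arbitrary $k$-subsets of $[0,n]$ with spread at most $b$, not just consecutive interval vertices; the trick is to track only the minima $\underline{X}_s$, for which the edge condition from Lemma \ref{164} together with the $k$-uniformity immediately bounds each one-step change by $b-k+1$. The upper bound is then just a matter of writing down a greedy interval path, and the two bounds match.
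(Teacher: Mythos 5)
Your proposal is correct and follows essentially the same route as the paper: the upper bound via the explicit path of interval vertices stepping by $b-k+1$, and the lower bound by showing that along any path the minima satisfy $\ul{X}_{t}-\ul{X}_{t-1}\le b-(k-1)$ (from Lemma \ref{164} together with $\ol{X}_t-\ul{X}_t\ge k-1$) and telescoping. The only cosmetic difference is that you pass through absolute values of the increments, whereas the paper sums the one-sided inequalities directly; both are valid.
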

\proof
Let $j-i=q(b-k+1)+r$ where $q$ is an integer and $1 \le r \le b-k+1$. Then $\lceil \frac{j-i}{b-k+1} \rceil=q+1$.
Obviously, the vertices $[i,i+(k-1)], [i+b-(k-1),i+b], [i+2b-2(k-1),i+2b-(k-1)], \dots, [i+qb-q(k-1),i+qb-(q-1)(k-1)], [j,j+(k-1)]$ form a path in $\G$ of length $q+1$.
Thus the distance is at most $q+1$.

If the vertices $X_0=[i,i+(k-1)], X_1, \dots, X_{l-1}, [j,j+(k-1)]=X_l$ form any path of length $l$ in $\G$, then, for $t=1,\dots,l$,
$\ol{X}_t-\ul{X}_{t-1}\le b$ and $\ol{X}_t-\ul{X}_t\ge k-1$, which implies
\begin{equation}
\label{183}
\ul{X}_t-\ul{X}_{t-1} \le b-(k-1).
\end{equation}
Summing up the inequalities (\ref{183}) for $t=1,\dots,l$ yields
$j-i \le l(b-k+1)$.
Since $j-i>q(b-k+1)$ we have $l \ge q+1$ and thus the distance is at least $q+1$.
\qed
\begin{corollary}
\label{193}
Let $X,Y \in \V$ and let $\ul{X} < \ul{Y}$ or $\ul{X} = \ul{Y}$ as well as $\ol{X} < \ol{Y}$.
Then $X$ and $Y$ have distance at most $\lceil \frac{\ol{Y}-\ul{X}-b}{b-k+1} \rceil+1$.
\end{corollary}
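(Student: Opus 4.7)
The plan is to construct an explicit short path from $X$ to $Y$ by threading through the canonical shifted $k$-intervals that already appear in the proof of Lemma~\ref{175}. Setting $l = \lceil \frac{\ol{Y} - \ul{X} - b}{b-k+1} \rceil$, I would split the argument into the trivial regime $\ol{Y} - \ul{X} \le b$ (so $l \le 0$) and the generic regime $l \ge 1$.

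In the trivial regime, the ordering hypothesis on $\ul{X}, \ul{Y}, \ol{X}, \ol{Y}$ forces $\ul{X \cup Y} = \ul{X}$ and $\ol{X \cup Y} = \ol{Y}$, so $X$ and $Y$ are already adjacent and the bound of $1$ is immediate. In the generic regime, I would exhibit the walk
\[
X,\; X_1,\; X_2,\; \dots,\; X_l,\; Y,
\]
where $X_t = [\ul{X} + t(b-k+1),\, \ul{X} + t(b-k+1) + (k-1)]$ is exactly the sort of shifted $k$-interval used inside Lemma~\ref{175}. Adjacency of consecutive interior vertices $X_t \sim X_{t+1}$ was already verified there, and $X \sim X_1$ will follow from $\ol{X} - \ul{X} \le b$ together with Lemma~\ref{164} (since $\ol{X_1} - \ul{X} = b$ and $\ol{X} - \ul{X_1} \le k-1$).

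The core point is to verify that $l$ is simultaneously large enough to force $X_l \sim Y$ and small enough to keep $X_l$ inside $\V$. The defining ceiling identity yields the two-sided sandwich
\[
\ol{Y} - \ul{X} - b \;\le\; l(b-k+1) \;<\; \ol{Y} - \ul{X} - (k-1);
\]
the left inequality gives $\ol{Y} - \ul{X_l} \le b$, and the right one gives $\ol{X_l} < \ol{Y} \le n$, which both places $X_l$ in $\V$ and, combined with $\ol{Y} - \ul{Y} \le b$, also yields $\ol{X_l} - \ul{Y} < b$; a final appeal to Lemma~\ref{164} then closes $X_l \sim Y$. The walk has length $l+1$, matching the stated bound. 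The only genuine obstacle is this sandwich computation on $l$; everything else is bookkeeping. (I would implicitly assume $b \ge k$, the edge case $b = k-1$ making $\G$ edgeless and the statement vacuous for $X \neq Y$.)
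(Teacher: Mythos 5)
Your construction is essentially the paper's proof: both build a path of $k$-intervals shifted by $b-k+1$ starting from $\ul{X}$, the only organizational difference being that the paper anchors an interval at each end ($X_1$ near $X$, $Y_1$ near $Y$) and cites Lemma~\ref{175} for the middle, whereas you run the chain one-sidedly and close $X_l \sim Y$ by the ceiling sandwich directly; your sandwich $(l-1)(b-k+1) < \ol{Y}-\ul{X}-b \le l(b-k+1)$ and the resulting adjacency checks are correct. Two small imprecisions, neither fatal: in the trivial regime the hypothesis does \emph{not} force $\ol{X\cup Y}=\ol{Y}$ (one can have $\ul{X}<\ul{Y}$ with $\ol{X}>\ol{Y}$), but adjacency still follows since $\ol{X}-\ul{Y}\le\ol{X}-\ul{X}\le b$ and $\ol{Y}-\ul{X}\le b$ give Lemma~\ref{164}'s condition; and in that same regime you should note $l\ge 0$ (which follows from $\ol{Y}-\ul{X}\ge k$ under the stated ordering, as the paper checks), since otherwise the claimed bound $l+1$ would fall below the actual distance $1$.
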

\proof
Let $i=\ul{X}$ and $j=\ol{Y}$.

{\bf Case 1.} $j-i \le b$. Then $\ol{Y}-\ul{X} \le b$ and $\ol{X}-\ul{Y} \le \ol{X} - \ul{X} \le b$. Consequently, $X$ and $Y$ are adjacent by Lemma \ref{164} and their distance is 1. Indeed, from the conditions on $X$ and $Y$ it follows that $\ol{Y} - \ul{X} > k-1$ and thus $\ol{Y} - \ul{X} -b > (-1)(b-k+1)$, which implies
$\lceil \frac{\ol{Y}-\ul{X}-b}{b-k+1} \rceil+1=1$.

{\bf Case 2.} $j-i > b$. Obviously, $X$ is adjacent to $X_1=[i+b-k+1,i+b]$ and $Y$ is adjacent to $Y_1=[j-b,j-b+k-1]$.

{\bf Case 2.1} $i+b-k+1 \ge j-b$. Then $X_1$ and $Y$ are adjacent since $\ol{X}_1-\ul{Y}=i+b-\ul{Y}=\ul{X}-\ul{Y}+b \le b$ and $\ol{Y}-\ul{X}_1=j-i-b+k-1 \le b$. Thus $X$ and $Y$ have distance at most 2. Indeed, $2 \le \lceil \frac{\ol{Y}-\ul{X}-b}{b-k+1} \rceil+1 \le \lceil \frac{b+(b-k+1)-b}{b-k+1} \rceil+1 =2$.

{\bf Case 2.2} $i+b-k+1 < j-b$. By Lemma \ref{175}, $X_1$ and $Y_1$ have distance $\lceil \frac{j-b-i-(b-k+1)}{b-k+1} \rceil=\lceil \frac{\ol{Y}-\ul{X}-b}{b-k+1} \rceil -1$, and thus $X$ and $Y$ have distance at most $\lceil \frac{\ol{Y}-\ul{X}-b}{b-k+1} \rceil+1.$
\qed

\begin{lem}
\label{338}
The graph $\G$ has the following number of vertices:
\[
|V_{n,k,b}|=(n-b+1)\binom{b}{k-1}+\binom{b}{k}=(n+1)\binom{b}{k-1}-(k-1)\binom{b+1}{k}.
\]
\end{lem}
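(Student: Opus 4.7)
The plan is to count the vertices by partitioning according to the value of $\ul{X}$. For a fixed $i = \ul{X} \in [0,n-(k-1)]$, the remaining $k-1$ elements of $X$ must lie in $[i+1,i+b] \cap [0,n]$, subject to no further constraints (the condition $\ol{X}-\ul{X}\le b$ becomes automatic once the upper window is $i+b$, and the condition $\ol{X} \le n$ is the other active bound).

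I would split into two ranges of $i$. For $0 \le i \le n-b$, the interval $[i+1,i+b]$ fits entirely inside $[0,n]$, so there are exactly $\binom{b}{k-1}$ ways to choose the remaining elements. For $n-b+1 \le i \le n-(k-1)$, the effective window shrinks to $[i+1,n]$, which has $n-i$ elements, contributing $\binom{n-i}{k-1}$ vertices. Summing these contributions and substituting $j=n-i$ in the second sum gives
\begin{equation*}
|V_{n,k,b}| = (n-b+1)\binom{b}{k-1} + \sum_{j=k-1}^{b-1}\binom{j}{k-1}.
\end{equation*}
The hockey-stick identity $\sum_{j=k-1}^{b-1}\binom{j}{k-1}=\binom{b}{k}$ then yields the first claimed form.

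For the second form, I would verify the algebraic identity $b\binom{b}{k-1} - \binom{b}{k} = (k-1)\binom{b+1}{k}$ by writing both sides as factorial expressions over the common denominator $k!(b-k+1)!$ and checking that the numerators agree via $bk-b+k-1 = (k-1)(b+1)$. Rewriting $(n-b+1)\binom{b}{k-1}+\binom{b}{k} = (n+1)\binom{b}{k-1} - \bigl(b\binom{b}{k-1}-\binom{b}{k}\bigr)$ and substituting this identity immediately produces $(n+1)\binom{b}{k-1}-(k-1)\binom{b+1}{k}$.

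There is no substantive obstacle here: the partition by $\ul{X}$ is the natural strategy, and both remaining ingredients (hockey stick and the small factorial identity) are routine. The only point deserving care is the range of $i$ in the second sub-sum — one must ensure that $i$ does not exceed $n-(k-1)$, so that enough room remains above $i$ for the other $k-1$ elements — which is precisely what makes the substitution $j=n-i$ land on the range $[k-1,b-1]$ needed to apply the hockey-stick identity cleanly.
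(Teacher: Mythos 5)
Your proof is correct and takes essentially the same approach as the paper: a partition of $V_{n,k,b}$ according to the minimum element, with the first $n-b+1$ classes each contributing $\binom{b}{k-1}$. The only cosmetic difference is that the paper collapses all vertices with $\ul{X}\ge n-b+1$ into a single block, namely all $k$-subsets of the $b$-element interval $[n-b+1,n]$, which yields the summand $\binom{b}{k}$ at once without needing the hockey-stick identity.
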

\proof
The proof follows directly from the partition
\[
V_{n,k,b}=\bigcupdot_{i=0}^{n-b}\left\{ X\in \binom{[i,i+b]}{k}: \, \underline{X}=i\right\}\cupdot \left\{  X\in \binom{[n-b+1,n]}{k}\right\}.
\]
\qed

\section{Bandwidth for $b\geq \frac{n+k-1}{2}$}\label{sec3}
In the following, we often write the elements of $V_{n,k,b}$ as $k$-tuples in ascending order, i.e., $X=(i_1,\dots , i_k)$ with $i_1<\dots < i_k$.
Then $\underline{X}=i_1$ and $\overline{X}=i_k$. Furthermore, let $\overleftarrow{X}=(i_k,\dots , i_1)$ as well as $X^c=(n-i_k,\dots ,n-i_1)$. We collect all vertices that are adjacent to all other vertices in the set
\[
	C=\{ X\in \V: \, n-b\leq \underline{X} \le \overline{X} \leq b\}.
\]
Note that $C\neq \emptyset$ iff $b\geq \frac{n+k-1}{2}$ and that
\begin{align}\label{anzC}
|C|=\binom{2b-n+1}{k}.
\end{align}
We denote the set of remaining vertices by $R=\V\setminus C$, and split it into two parts:
\begin{align*}
R'&=\{X\in R: \, \underline{X}+\overline{X}\neq n \}, \\
R''&=\{X\in R: \, \underline{X}+\overline{X}= n \}.
\end{align*}
Let $R''=R''_0 \cupdot R''_1$ be a partition of $R''$ such that
\begin{align}\label{eqneareq}
||R''_0|-|R''_1||\leq 1.
\end{align}
We define a partition $R'=R'_0 \cupdot R'_1$ of $R'$ by
\begin{align*}
R'_0&=\{X\in R': \, \underline{X}+\overline{X}<n \},\\
R'_1&=\{X\in R': \, \underline{X}+\overline{X}>n \}
\end{align*}
and with $R_0=R'_0 \cupdot R''_0$ and $R_1=R'_1 \cupdot R''_1$ we have a partition $R=R_0 \cupdot R_1$.

\begin{lem}\label{lem1}
We have $||R_0|-|R_1||\leq 1$.
\end{lem}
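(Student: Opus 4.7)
The plan is to exploit the complementation involution $X \mapsto X^c = (n-i_k,\dots,n-i_1)$, which essentially does all the work. First I would verify that this map is a well-defined involution on $\V$: if $X=(i_1,\dots,i_k)\in \V$ then $X^c \in \binom{[0,n]}{k}$ and $\ol{X^c}-\ul{X^c}=(n-i_1)-(n-i_k)=i_k-i_1\le b$, so $X^c \in \V$, and clearly $(X^c)^c=X$.

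Next I would check that the involution preserves the set $C$, and hence restricts to an involution on $R$. Since $\ul{X^c}=n-\ol{X}$ and $\ol{X^c}=n-\ul{X}$, the condition $n-b\le \ul{X^c}\le \ol{X^c}\le b$ is equivalent to $n-b \le n-\ol{X}$ and $n-\ul{X}\le b$, i.e., to $n-b\le \ul{X}\le \ol{X}\le b$. Thus $X^c \in C$ iff $X \in C$, and the map restricts to a bijection $R\to R$.

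Now I would use the key numerical identity
\[
\ul{X^c}+\ol{X^c} = 2n-(\ul{X}+\ol{X}).
\]
This immediately shows that complementation swaps the conditions $\ul{X}+\ol{X}<n$ and $\ul{X}+\ol{X}>n$, so it maps $R'_0$ bijectively onto $R'_1$. In particular $|R'_0|=|R'_1|$.

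Finally I would conclude: since $R_0=R'_0\cupdot R''_0$ and $R_1=R'_1\cupdot R''_1$, we have
\[
||R_0|-|R_1|| = ||R''_0|-|R''_1|| \le 1
\]
by the choice (\ref{eqneareq}) of the partition of $R''$. There is no real obstacle here; the only thing to be careful about is that the involution genuinely preserves both $\V$ and the distinguished subset $C$, which is handled above.
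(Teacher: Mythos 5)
Your proof is correct and uses exactly the same idea as the paper: the complementation map $X\mapsto X^c$ is a bijection between $R'_0$ and $R'_1$, so the discrepancy reduces to $||R''_0|-|R''_1||\le 1$, which holds by the choice \eqref{eqneareq}. Your additional verifications that the involution preserves $\V$ and $C$ are details the paper leaves implicit, but the argument is the same.
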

\proof
A bijection between $R'_0$ and $R'_1$ is given by $X\mapsto X^c$. Hence we have $|R'_0|=|R'_1|$ and with \eqref{eqneareq} we obtain the assertion.
\qed

Recall the definition of the \emph{lexicographic ordering} $<_{lex}$ on the set of all $k$-tuples of integers:
\begin{align}\label{lexO}
(x_1,x_2,\dots , x_k)<_{lex} (y_1,y_2,\dots , y_k) \text{ if } \exists i\in [k] (\forall j\in [i-1]: \, x_j=y_j)\wedge x_i<y_i.
\end{align}
We define a proper numbering of $\V$ in the form of a total order $\le$. The minimal element gets label $1$, the next elements get labels $2,3,\dots$ and the maximal element gets label $|\V|$. Each total order will be given in the form of an ordinal sum of suborders:
If $\V=S_1 \cupdot \dots \cupdot S_l$ and $\le_i$ is a total order on $S_i$, $i=1,\dots,l$, then $\V=S_1 \oplus \dots \oplus S_l$ means that the elements of $\V$ are totally ordered as follows:
$X \le Y$ if there is some $i$ with $X,Y \in S_i$ and $X \le_i Y$ or there are some $i,j$ with $i < j$ and $X \in S_i$ and $Y \in S_j$.
We have $\V=R_0\cupdot C \cupdot R_1$, i.e., $l=3$ with $S_1=R_0$, $S_2=C$ and $S_3=R_1$. We define a total order $\le_{spo}$, which we call the \emph{simple palindrom ordering} (SPO), as follows:
\begin{align*}
\V=R_0\oplus C \oplus R_1,
\end{align*}
with the following suborders:
\begin{enumerate}
\item For all $X,Y\in R_0$: $X\leq_{spo}Y$ if $X\leq_{lex}Y$.
\item For all $X,Y\in C$: $X\leq_{spo}Y$ if $X\leq_{lex}Y$.
\item For all $X,Y\in R_1$: $X\leq_{spo}Y$ if $\overleftarrow{X}\leq_{lex}\overleftarrow{Y}$.
\end{enumerate}
Let $f_{spo}(X)$ be the label of $X\in V_{n,k,b}$ in the SPO.
Recall that the \emph{$f_{spo}$-distance} of  $X,Y\in V_{n,k,b}$ is given by
\[
d_{f_{spo}}(X,Y)=|f_{spo}(X)-f_{spo}(Y)|.
\]

\begin{lem}\label{lem2}
Let $X,Y$ be two adjacent elements of $V_{n,k,b}$ with $X<_{spo} Y$ and maximal $f_{spo}$-distance, where in addition $\underline{X}$ is minimal or $\overline{Y}$ is maximal. Then $X=[0,k-1]$ and $Y=[b-k+1,b]$ or $X=[n-b,n-b+k-1]$ and $Y=[n-k+1,n]$.
\end{lem}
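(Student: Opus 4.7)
I plan to prove the $\underline{X}$-minimal case directly; the $\overline{Y}$-maximal case then follows by the palindromic graph automorphism $\sigma: X \mapsto X^c = \{n-x : x \in X\}$, which swaps the two candidate pairs (since $\sigma([0,k-1]) = [n-k+1,n]$ and $\sigma([b-k+1,b]) = [n-b,n-b+k-1]$), preserves adjacency and $f_{spo}$-distances, and interchanges the two extremality conditions on $\underline{X}$ and $\overline{Y}$.

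Assume $(X,Y)$ is max-distance adjacent with $X <_{spo} Y$ and $\underline{X}$ minimal, and set $a := \underline{X}$. Lemma~\ref{164} forces $\overline{Y} \le a+b$. Among $X' \in V_{n,k,b}$ with $\underline{X'} = a$, the lex-smallest is $X_a := [a,a+k-1]$, giving $f_{spo}(X) \ge f_{spo}(X_a)$ when $X_a \in R_0$ (with a parallel bound when $X_a$ has moved into $C$ or $R''$). Among $Y' \in V_{n,k,b}$ with $\overline{Y'} \le a+b$, the reverse-lex-largest element of $R_1$ is $Y_a := [b-k+1+a,\, b+a]$ (or $[n-k+1,n]$ if $a+b > n$), giving $f_{spo}(Y) \le f_{spo}(Y_a)$. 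Hence $d_{f_{spo}}(X,Y) \le \Delta(a) := f_{spo}(Y_a) - f_{spo}(X_a)$. By Lemma~\ref{164} the pair $(X_0, Y_0) = ([0,k-1], [b-k+1,b])$ is adjacent (using $\overline{Y_0} - \underline{X_0} = b$ and $\overline{X_0} - \underline{Y_0} = 2k-2-b \le k-1 \le b$), with $d_{f_{spo}}(X_0, Y_0) = \Delta(0)$.

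Next I show $\Delta$ is non-increasing in $a$. Telescoping the definitions and applying the hockey-stick identity gives
\[
\Delta(a+1) - \Delta(a) \;=\; \binom{\min(b,\,2b+2a+1-n)}{k-1} - \binom{\min(b,\,n-2a-1)}{k-1}.
\]
In the regime $a \le (n-b-1)/2$ this equals $\binom{2b+2a+1-n}{k-1} - \binom{b}{k-1} \le 0$ since $2b+2a+1-n \le b$. Once $a > n-b$ the $Y$-contribution to $\Delta$ is frozen at $[n-k+1,n]$ while the $X$-contribution remains nonnegative, forcing further decrease. A direct check on the transitional regime $(n-b-1)/2 < a \le n-b$, where one must also track $X_a$ possibly leaving $R_0$ for $C$, closes the monotonicity argument.

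Combining, $d^* = \Delta(0)$, attained by $(X_0, Y_0)$. Any max-distance pair $(X,Y)$ satisfies $\Delta(\underline{X}) = \Delta(0)$, and minimality forces $\underline{X} = 0$; tightness of both bounds in the second paragraph then pins down $X = X_0 = [0, k-1]$ and $Y = Y_0 = [b-k+1, b]$. The principal obstacle is the transitional case analysis for the monotonicity of $\Delta$, where the forms of $X_a$ and $Y_a$ (and the assignment of $R''$-elements to $R_0$ versus $R_1$) each change across the breakpoints $(n-b-1)/2$, $(n-k+1)/2$, and $n-b$, requiring a handful of boundary computations to keep the binomial comparison tight.
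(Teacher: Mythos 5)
Your reduction of the problem to comparing the quantities $\Delta(a)=d_{f_{spo}}([a,a+k-1],[a+b-k+1,a+b])$ for $a\in[0,n-b]$ is sound and is essentially the paper's strategy, but the central claim that $\Delta$ is non-increasing on the whole range is false, and your own difference formula already shows this: for $a>\frac{n-b-1}{2}$ it reads $\Delta(a+1)-\Delta(a)=\binom{b}{k-1}-\binom{n-2a-1}{k-1}\ge 0$, so $\Delta$ is non-\emph{de}creasing on the second half of the range. The true behaviour (which the paper establishes via an injection $I_i\to I_{i-1}$ for $i\le\frac{n-b}{2}$ and the palindromic symmetry for $i>\frac{n-b}{2}$) is V-shaped, so the maximum sits at one of the \emph{two} endpoints $a=0$ or $a=n-b$ --- which is precisely why the conclusion of the lemma is a disjunction of two pairs and why Lemma \ref{prop2} must take the ceiling of the average of the two endpoint distances. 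Your final step ``$d^*=\Delta(0)$, attained by $(X_0,Y_0)$'' is therefore wrong. A concrete counterexample: $n=5$, $k=2$, $b=3$ with the choice $R''_1=\{\{1,4\}\}$ gives the order $\{0,1\},\{0,2\},\{0,3\},\{1,2\},\{1,3\}\mid\{2,3\}\mid\{1,4\},\{2,4\},\{3,4\},\{2,5\},\{3,5\},\{4,5\}$, hence $\Delta(0)=d_{f_{spo}}(\{0,1\},\{2,3\})=5$ but $\Delta(2)=d_{f_{spo}}(\{2,3\},\{4,5\})=6$; the unique maximal pair is the second one and no maximal adjacent pair has $\ul{X}=0$ at all.

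Two further points would need repair even after correcting ``non-increasing'' to ``V-shaped''. First, the opening symmetry reduction via $X\mapsto X^c$ does not exactly preserve $f_{spo}$-distances: complementation reverses the suborders between $R_0'$ and $R_1'$ but not the lexicographic order within $C$ (there minima are compared first, whereas after complementation maxima are compared first), and the arbitrary partition $R''=R''_0\cupdot R''_1$ need not be complement-invariant; since the two candidate maxima differ by at most $1$, a symmetry that holds only ``up to bounded error'' cannot be used to transfer the conclusion. Second, the identifications of $X_a$ and $Y_a$ (e.g.\ that $Y_a$ is the $\le_{spo}$-largest vertex with $\ol{Y'}\le a+b$ and lies in $R_1$) fail at the boundary: for $a=0$ one has $Y_0=[b-k+1,b]\in C$ throughout the regime $b\ge\frac{n+k-1}{2}$ in which the lemma is applied. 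These membership transitions are exactly what the paper's injection argument tracks case by case, and they are where your ``parallel bound'' and ``direct check on the transitional regime'' placeholders hide the real work --- indeed, carrying out that direct check would have revealed that the claimed monotonicity fails there.
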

\proof
If $Y\in C$ then $X=[0,k-1]$ and $Y=[b-k+1,b]$ have maximal $f_{spo}$-distance. Analogously, if $X\in C$ then $X=[n-b,n-b+k-1]$ and $Y=[n-k+1,n]$ have maximal $f_{spo}$-distance. It is not possible that $X$ and $Y$ lie both in $R_0$ or both in $R_1$, because in these cases $Y$ could be replaced by $[b-k+1,b]$ and $X$ by $[n-b,n-b+k-1]$, respectively. Thus it remains the case that $X\in R_0$ and $Y\in R_1$. To reach a maximal $f_{spo}$-distance, the form $X=(\underline{X},\underline{X}+1,\dots , \underline{X}+k-1)$ and $Y=(\overline{Y}-k+1,\overline{Y}-k+2,\dots , \overline{Y})$ with $\overline{Y}-\underline{X}=b$ is necessary. So we have $X=[i,i+k-1]$ and $Y=[i+b-k+1,i+b]$ for an $i\in [0,n-b]$. To prove the assertion, it is sufficient to show the following:
\begin{enumerate}[label=\alph{enumi})]
\item For $1\leq i\leq \frac{n-b}{2}$ we have $d_{spo}([i-1,i+k-2],[i+b-k,i+b-1])\geq d_{spo}([i,i+k-1],[i+b-k+1,i+b])$.
\item For $\frac{n-b}{2}<i\leq n-b$ we have $d_{spo}([i-1,i+k-2],[i+b-k,i+b-1])\leq d_{spo}([i,i+k-1],[i+b-k+1,i+b])$.
\end{enumerate}
We note that b) follows from a) because of the symmetry of the ordering. To show a) we define $I_i=\{X\in V_{n,k,b}: [i,i+k-1]\leq_{spo} X \leq_{spo} [i+b-k+1,i+b] \}$. To prove the inequality it is enough to show that the mapping $X=(i_1,i_2,\dots , i_k)\mapsto (i_1-1,i_2-1,\dots , i_k-1)=\tilde{X}$ is an injection $\phi$ from $I_i$ to $I_{i-1}$.
The injectivity is clear. Thus it remains to show that
$\tilde{X}=\phi (X)\in I_{i-1}$
if $X\in I_i$.

{\bf Case 1.} $\tilde{X}\in C$.

This case is easy, because $C\subseteq I_j$ for all $j$.

{\bf Case 2.} $\tilde{X}\in R_0$.
\begin{enumerate}
\item If $X\in R_0$ then $[i,i+k-1]\leq_{lex} X$ and thus $[i-1,i+k-2] \leq_{lex} \tilde{X}$, which yields $\tilde{X}\in I_{i-1}$.
\item If $X\in C$ then $i\leq \frac{n-b}{2}<n-b\leq \underline{X}$ and thus $i-1<\underline{X}-1=\underline{\tilde{X}}$. This implies that $[i-1,i+k-2] <_{lex} \tilde{X}$, which yields $\tilde{X}\in I_{i-1}$.
\item If $X\in R_1$ then $\overline{X}\leq \overline{[i+b-k+1,i+b]}=i+b$ because of $X\leq_{spo}[i+b-k+1,i+b]$ and due to $\underline{X}+\overline{X}\geq n$ we have $\underline{X}\geq n-i-b$. This implies $\underline{\tilde{X}}=\underline{X}-1\geq n-i-b-1\geq i-1$ because of $i\leq \frac{n-b}{2}$. Hence $\tilde{X}\in I_{i-1}$.
\end{enumerate}

{\bf Case 3.} $\tilde{X}\in R_1$.

\begin{enumerate}
\item The case $X\in R_0$ is not possible because $\underline{\tilde{X}}+\overline{\tilde{X}}=\underline{X}+\overline{X}-2<n$, which contradicts $\tilde{X}\in R_1$.
\item Let $X\in C$. Then $\overline{X}\leq b$, which implies $\overline{\tilde{X}}\leq b-1\leq i+b-2<\overline{[i+b-k,i+b-1]}$ due to $i\geq 1$. Hence $\overleftarrow{\tilde{X}}<_{lex} \overleftarrow{[i+b-k,i+b-1]}$, which yields $\tilde{X}\in I_{i-1}$.
\item If $X\in R_1$ then $\overline{X}\leq i+b$ and thus $\overline{\tilde{X}}\leq i+b-1$. This implies $\overleftarrow{\tilde{X}}\leq_{lex} \overleftarrow{[i+b-k,i+b-1]}$ and hence $\tilde{X}\in I_{i-1}$.
\end{enumerate}
\qed

Now the bandwidth of $f_{spo}$ can be determined:
\begin{lem}\label{prop2}
We have $B_{f_{spo}}(G_{n,k,b})=\left\lceil\frac{|V_{n,k,b}|+|C|-2}{2}\right\rceil$.
\end{lem}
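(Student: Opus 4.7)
The plan is to combine Lemma~\ref{lem2} with Lemma~\ref{lem1} to reduce the computation to a direct inspection of labels in the SPO. By Lemma~\ref{lem2}, every adjacent pair $(X,Y)$ with $X<_{spo}Y$ attaining $B_{f_{spo}}(\G)$ can be chosen (among all maximizers, one that additionally has minimal $\underline{X}$, respectively maximal $\overline{Y}$) to be one of the two explicit pairs
\[
(X_1,Y_1)=([0,k-1],\,[b-k+1,b])\quad\text{or}\quad (X_2,Y_2)=([n-b,n-b+k-1],\,[n-k+1,n]).
\]
So it suffices to locate these four vertices inside the ordinal sum $\V=R_0\oplus C\oplus R_1$ and take the larger of the two resulting distances.

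The main step is the position count. Since $\underline{X_1}+\overline{X_1}=k-1<n$, we have $X_1\in R'_0\subseteq R_0$, and as the lex-smallest $k$-subset of $[0,n]$ it receives label $f_{spo}(X_1)=1$. The hypothesis $b\ge(n+k-1)/2$ forces $[n-b,b]$ to contain at least $k$ integers, and $C$ is precisely the set of $k$-subsets of $[n-b,b]$; within $C$, taken in lex order, $Y_1=[b-k+1,b]$ is the greatest element and $X_2=[n-b,n-b+k-1]$ is the smallest. Hence $f_{spo}(Y_1)=|R_0|+|C|$ and $f_{spo}(X_2)=|R_0|+1$. Finally, $Y_2=[n-k+1,n]$ satisfies $\underline{Y_2}+\overline{Y_2}=2n-k+1>n$, so $Y_2\in R'_1\subseteq R_1$; since $\overleftarrow{Y_2}=(n,n-1,\dots,n-k+1)$ is clearly the lex-greatest among $\{\overleftarrow{X}:X\in R_1\}$, the element $Y_2$ is maximal in the SPO and $f_{spo}(Y_2)=|\V|$.

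Reading off the two distances gives
\[
d_{f_{spo}}(X_1,Y_1)=|R_0|+|C|-1,\qquad d_{f_{spo}}(X_2,Y_2)=|R_1|+|C|-1,
\]
and therefore $B_{f_{spo}}(\G)=\max(|R_0|,|R_1|)+|C|-1$. By Lemma~\ref{lem1}, $|R_0|$ and $|R_1|$ differ by at most one and sum to $|\V|-|C|$, whence $\max(|R_0|,|R_1|)=\lceil(|\V|-|C|)/2\rceil$. Substituting yields
\[
B_{f_{spo}}(\G)=\Bigl\lceil\tfrac{|\V|-|C|}{2}\Bigr\rceil+|C|-1=\Bigl\lceil\tfrac{|\V|+|C|-2}{2}\Bigr\rceil,
\]
which is the claimed formula. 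The only subtlety is ensuring the two candidate pairs really are admissible (e.g.\ that $Y_1\in C$ and that $X_2\ne Y_2$ so the distance is well-defined), which follows immediately from $b\ge(n+k-1)/2$ together with $n\ge k$; the remaining parity check in matching the two ceilings is routine arithmetic. The true work has been absorbed into Lemma~\ref{lem2}, so no further obstacle arises here.
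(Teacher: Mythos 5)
Your proposal is correct and follows essentially the same route as the paper: both locate the four extremal vertices $[0,k-1]$, $[b-k+1,b]$, $[n-b,n-b+k-1]$, $[n-k+1,n]$ in the ordinal sum $R_0\oplus C\oplus R_1$, invoke Lemma~\ref{lem2} to reduce the maximum $f_{spo}$-distance to these two pairs, and use Lemma~\ref{lem1} to show the two distances $|R_0|+|C|-1$ and $|R_1|+|C|-1$ differ by at most one, yielding the ceiling formula. Your phrasing via $\max(|R_0|,|R_1|)+|C|-1=\lceil(|\V|-|C|)/2\rceil+|C|-1$ is an equivalent arithmetic packaging of the paper's $\lceil(d_1+d_2)/2\rceil$.
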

\proof
We have $f_{spo}([0,k-1])=1$ and $f_{spo}([n-k+1,n])=|V_{n,k,b}|$. If $f_{spo}([n-b,n-b+k-1])=u+1$, then $f_{spo}([b-k+1,b])=u+|C|$. Because of Lemma \ref{lem1} we have $|u-(|V_{n,k,b}|-|C|-u)|\leq 1$ and thus $|(u+|C|-1)-(|V_{n,k,b}|-u-1)|\leq 1$. Lemma \ref{lem2} implies that one of the $f_{spo}$-distances $d_{f_{spo}}([0,k-1],[b-k+1,b])=u+|C|-1$ and $d_{f_{spo}}([n-b,n-b+k-1],[n-k+1,n])=|V_{n,k,b}|-u-1$ is the maximal $f_{spo}$-distance. As they both differ from each other by at most $1$ it follows that
\[
B_{f_{spo}}(G_{n,k,b})=\left\lceil\frac{u+|C|-1+|V_{n,k,b}|-u-1}{2}\right\rceil=\left\lceil\frac{|V_{n,k,b}|+|C|-2}{2}\right\rceil.
\]
\qed

Now we are able to prove the first part of Theorem \ref{thm1}.
\begin{proof}[Proof of Theorem \ref{thm1}.a)]
We know from Lemma \ref{prop2} that $B(G_{n,k,b})\leq \left\lceil\frac{|V_{n,k,b}|+|C|-2}{2}\right\rceil$. Let $f$ be an arbitrary proper numbering of $G_{n,k,b}$. Let $X_V$ be the vertex with number $1$ and $X^V$ the vertex with number $|V_{n,k,b}|$. Further let
$X_C$ be the vertex of $C$ with smallest number, denoted $\alpha$,  and $X^C$ be the veretx of $C$ with largest number, denoted  $\beta$. Then $\beta - \alpha\geq |C|-1$. Further $X_V$ and $X^C$ as well as $X_C$ and $X^V$ are adjacent with $d_f(X_V,X^C)=\beta -1$ and $d_f(X_C,X^V)=|V_{n,k,b}|-\alpha$. The sum of them is
\[
s=(\beta -1)+(|V_{n,k,b}|-\alpha)=|V_{n,k,b}|+(\beta - \alpha)-1\geq |V_{n,k,b}|+|C|-2.
\]
The maximum of both $f$-distances is therefore at least $\left\lceil\frac{|V_{n,k,b}|+|C|-2}{2}\right\rceil$.
From Lemma \ref{338} and \eqref{anzC} we obtain
\[
B(G_{n,k,b})=  \left\lceil\frac{(n+1)\binom{b}{k-1}-(k-1)\binom{b+1}{k}+\binom{2b-n+1}{k}-2}{2}\right\rceil.
\]
\end{proof}

\section{Asymptotic bandwidth for $b=o(n)$}\label{sec4}
In this section, we consider the case, where $b$ grows sublinearly with respect to $n$. First we take a simple proper numbering, which provides an upper bound for the bandwidth.
\begin{lem}
\label{401}
Let $n,k,b$ be arbitrary integers with $1 \le k-1 \le b \le n$. Then
\[
B(G_{n,k,b})\leq k\binom{b}{k}.
\]
\end{lem}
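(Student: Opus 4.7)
The plan is to exhibit an explicit proper numbering of $G_{n,k,b}$ realizing the bound. The degenerate case $b=k-1$ yields an edgeless graph (by Lemma~\ref{164}, adjacency forces the two $k$-intervals to coincide) and the bound $k\binom{k-1}{k}=0$ is met, so I may assume $b\ge k$.

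For $a\in[0,n-k+1]$ let $V_a=\{X\in\V:\ul X=a\}$. The remaining $k-1$ elements of any $X\in V_a$ lie in $[a+1,\min(n,a+b)]$, giving $|V_a|\le\binom{b}{k-1}$. I define $f$ by first sorting $\V$ in ascending order of $\ul X$, then inside each class $V_a$ by ascending $\ol X$ (with arbitrary further tie-breaking). Write $\ell(Z)$ for the position of $Z$ inside its class $V_{\ul Z}$.

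Now fix adjacent vertices $X,Y$ with $a:=\ul X\le c:=\ul Y$, and set $d=c-a$. Lemma~\ref{164} gives $\ol Y\le a+b$, which combined with $\ol Y\ge c+k-1$ forces $d\le b-k+1$. If $d=0$ then $|f(Y)-f(X)|\le|V_a|-1\le\binom{b}{k-1}-1\le(b-k+1)\binom{b}{k-1}=k\binom{b}{k}$ since $b\ge k$. If $d\ge 1$ then
\[
f(Y)-f(X)\le\sum_{j=a}^{c-1}|V_j|+\ell(Y)-\ell(X)\le d\binom{b}{k-1}+\ell(Y)-1.
\]
The secondary sort is chosen exactly so that $\ell(Y)$ is bounded by the number of $Z\in V_c$ with $\ol Z\le\ol Y\le a+b$, i.e.\ by the number of $(k-1)$-subsets of $[c+1,a+b]$, which is $\binom{b-d}{k-1}$. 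Thus everything reduces to the elementary inequality
\[
\binom{b-d}{k-1}\le(b-k+1-d)\binom{b}{k-1}+1,
\]
which I would check by splitting: at $d=b-k+1$ both sides equal $1$, while for $d<b-k+1$ the coefficient $b-k+1-d\ge 1$ makes the right-hand side at least $\binom{b}{k-1}+1>\binom{b-d}{k-1}$.

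The main conceptual obstacle is recognizing the right secondary order. A plain lexicographic tie-break would let $\ell(Y)$ be as large as the lex position of $Y\setminus\{c\}$ inside $\binom{[c+1,c+b]}{k-1}$, which overshoots the target. Sorting instead by $\ol X$ converts the adjacency restriction $\ol Y\le a+b$ directly into the tight count $\binom{b-d}{k-1}$; the rest is routine bookkeeping.
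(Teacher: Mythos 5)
Your proof is correct. It follows the same basic route as the paper --- an explicit numbering whose primary key is $\ul{X}$, followed by a count of the vertices that can lie between two adjacent ones --- but the execution differs in an instructive way. The paper uses the plain lexicographic order and avoids all per-class bookkeeping via a sandwich argument: for adjacent $X<_{lex}Y$ it sets $X'=[\ul{X},\ul{X}+k-1]$ and $Y'=[\ul{X}+b-k+1,\ul{X}+b]$, observes $X'\le_{lex}X<_{lex}Y\le_{lex}Y'$ (adjacency forces $\ul{Y}\le\ul{X}+b-k+1$, with equality only when $Y=Y'$), and notes that every $Z$ with $X'\le_{lex}Z<_{lex}Y'$ satisfies $\ul{Z}\in[\ul{X},\ul{X}+b-k]$, giving at most $(b-k+1)\binom{b}{k-1}=k\binom{b}{k}$ such $Z$. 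So your closing remark that a lexicographic tie-break ``overshoots the target'' is true only relative to your particular bookkeeping, where $\ell(Y)$ is bounded inside the single class $V_{\ul{Y}}$; with the global sandwich count the lexicographic order attains exactly the same bound, and more cheaply, needing neither the secondary sort by $\ol{X}$ nor the auxiliary inequality $\binom{b-d}{k-1}\le(b-k+1-d)\binom{b}{k-1}+1$. Your refinement is sound (the bound $\ell(Y)\le\binom{b-d}{k-1}$ and the binomial inequality both check out, and the degenerate case $b=k-1$ is handled correctly), but it buys nothing extra here, since the extremal configuration $d=b-k+1$ already saturates the same constant $k\binom{b}{k}$.
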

\proof
We order the vertices of $G_{n,k,b}$ in a lexicographic way, see \eqref{lexO}. Let $f_{lex}(X)$ be the label of $X\in V_{n,k,b}$ with respect to this ordering. Now let $X$ and $Y$ be two adjacent vertices with $X<_{lex} Y$ and let $X'=[\underline{X},\underline{X}+k-1]$ and $Y'=[\underline{X}+b-k+1,\underline{X}+b]$. Then
\begin{align}\label{lexB}
X'\leq_{lex} X<_{lex} Y \leq_{lex} Y'.
\end{align}
Moreover, for $j \in [0,n-b]$,
\begin{align*}
|\{X\in V_{n,k,b}:\, \underline{X}=j\}|=\binom{b}{k-1}.
\end{align*}
and, for $j \in [n-b+1,n]$,
\begin{align*}
|\{X\in V_{n,k,b}:\, \underline{X}=j\}|\le\binom{b}{k-1}.
\end{align*}
Since $Y'$ is the lexicographically smallest vertex with minimum element $\underline{X}+b-k+1$ it follows that
\begin{align}
\label{700}
|f_{lex}(Y')-f_{lex}(X')|\leq (b-k+1)\binom{b}{k-1}=k\binom{b}{k}.
\end{align}
Now \eqref{lexB}  and \eqref{700} imply
\begin{align*}
|f_{lex}(Y)-f_{lex}(X)|\leq k\binom{b}{k},
\end{align*}
which proves the assertion.
\qed

Chv\'atal observed in \cite{bwdiamb} that a lower bound for the bandwidth is given by
\begin{align}\label{lbb}
B(G)\geq \left\lceil \frac{|V|-1}{\diam(G)}\right\rceil.
\end{align}
Here the \emph{diameter} $\diam(G)$ of the graph $G=(V,E)$ is the maximal distance of any two vertices of $G$.

By Corollary \ref{193}, the distance of any two vertices of $\G$ is at most $\lceil \frac{n-0-b}{b-k+1} \rceil + 1= \lceil \frac{n-k+1}{b-k+1} \rceil$ and by Lemma \ref{175} the vertices $[0,k-1]$ and $[n-k+1,n]$ have distance $\lceil \frac{n-k+1}{b-k+1} \rceil$. Accordingly,

\begin{align}
\label{diam_Gnkb}
\diam(\G)=\left\lceil \frac{n-k+1}{b-k+1}\right\rceil.
\end{align}
Now we have all preparations to prove Theorem \ref{thm1} b).
\begin{proof}[Proof of Theorem \ref{thm1} b)]
From Lemma \ref{401} we know that
\[
B(\G)\leq k\binom{b}{k}.
\]
For the lower bound, we use the fact that $\binom{b}{k}=\frac{b^k}{k!}+O(b^{k-1})$ as $b \to \infty$. We have by Lemma \ref{338},  \eqref{lbb} and  \eqref{diam_Gnkb} for $n\to \infty$:
\begin{align*}
B(\G)&\ge \left\lceil \frac{|\V|-1}{\diam(\G)}\right\rceil = \frac{(n-b+1)\binom{b}{k-1}+\binom{b}{k}-1}{\left\lceil \frac{n-k+1}{b-k+1}\right\rceil}=
\frac{n\binom{b}{k-1}+O(b^{k})}{\frac{n-k+1}{b-k+1}+O(1)}\\
&=\frac{nk\binom{b}{k}+O(b^{k+1})}{n+O(b)}=\frac{k\binom{b}{k}+O(\frac{b^{k+1}}{n})}{1+O(\frac{b}{n})}.
\end{align*}

In the case $b=o(n^{\frac{1}{k+1}})$  we have
\[
\frac{k\binom{b}{k}+O(\frac{b^{k+1}}{n})}{1+O(\frac{b}{n})} \ge \left(k\binom{b}{k} + o(1)\right)\left(1-O(\frac{b}{n})\right) = k\binom{b}{k} - o(1).
\]

Thus,  $B(\G)\geq k\binom{b}{k}$ for sufficiently large $n$, which proves the first part of the assertion.

If $b=o(n)$ then
\[
\frac{k\binom{b}{k}+O(\frac{b^{k+1}}{n})}{1+O(\frac{b}{n})}\sim k \binom{b}{k},
\]
which shows that $B(\G)\gtrsim k\binom{b}{k}$ as $n\to \infty$, which proves the second part of the assertion.
\end{proof}

\section{Further basic properties for the asymptotics}
\label{451}
\begin{lem}
\label{437}
Let $b \sim \beta n$, $\delta > 0$, $X,Y \in \V$ and $\ol{Y}-\ul{X} \lesssim i(1-\delta) \beta n$ as well as
$\ol{X}-\ul{Y} \lesssim i(1-\delta) \beta n$, where $n \rightarrow \infty$ and $i$ is a positive integer.
If $n$ is sufficiently large, then $X$ and $Y$ have distance at most $i$.
\end{lem}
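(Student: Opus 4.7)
My plan is to invoke Corollary~\ref{193} after a short symmetry reduction, and then read off the bound by plugging the asymptotic hypotheses into the ceiling that appears in that corollary.

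First I would dispose of the degenerate case $\ul{X}=\ul{Y}$ and $\ol{X}=\ol{Y}$: then $\ol{X\cup Y}-\ul{X\cup Y}=\ol{X}-\ul{X}\le b$ because $X\in\V$, so by Lemma~\ref{164} the vertices $X,Y$ are either equal or adjacent, and in both situations their distance in $\G$ is at most $1\le i$. In every remaining case the hypothesis of the lemma is symmetric in $X$ and $Y$ (the two bounds $\ol{Y}-\ul{X}\lesssim i(1-\delta)\beta n$ and $\ol{X}-\ul{Y}\lesssim i(1-\delta)\beta n$ are interchanged by swapping the two vertices), so after swapping if necessary I may assume that $\ul{X}<\ul{Y}$, or else $\ul{X}=\ul{Y}$ and $\ol{X}<\ol{Y}$. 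Under this assumption Corollary~\ref{193} applies and gives that $X$ and $Y$ have distance at most
\[
\left\lceil\frac{\ol{Y}-\ul{X}-b}{b-k+1}\right\rceil+1.
\]

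Next I would insert the asymptotics. From $b\sim\beta n$ we get $b-k+1\sim\beta n$, and combining with $\ol{Y}-\ul{X}\lesssim i(1-\delta)\beta n$ yields
\[
\frac{\ol{Y}-\ul{X}-b}{b-k+1}\le i(1-\delta)-1+o(1)=i-1-i\delta+o(1)
\]
as $n\to\infty$. Since $i\delta$ is a fixed positive constant and the $o(1)$ term tends to $0$, for all sufficiently large $n$ the right-hand side is strictly less than $i-1$, so the ceiling is at most $i-1$, and hence the distance of $X$ and $Y$ is at most $i$.

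I do not foresee a real obstacle: the statement is essentially Corollary~\ref{193} rephrased in the regime $b\sim\beta n$, and the only work is the symmetry bookkeeping together with the standard passage from the $\lesssim$-hypothesis to a strict inequality for $n$ large.
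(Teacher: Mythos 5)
Your proposal is correct and follows essentially the same route as the paper: reduce by symmetry to the case $(\ul{X},\ol{X})<_{lex}(\ul{Y},\ol{Y})$, apply Corollary~\ref{193}, and evaluate the ceiling asymptotically as $\lceil i(1-\delta)-1+o(1)\rceil+1\le i$. Your explicit treatment of the degenerate case $(\ul{X},\ol{X})=(\ul{Y},\ol{Y})$ is a small tidiness improvement over the paper's ``without loss of generality,'' but not a different argument.
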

\proof Without loss of generality, let $(\ul{X},\ol{X}) \le_{lex} (\ul{Y},\ol{Y})$, i.e., $\ul{X} < \ul{Y}$ or $\ul{X} = \ul{Y}$ as well as $\ol{X} < \ol{Y}$.
By Corollary \ref{193}, $X$ and $Y$ have distance at most $\lceil\frac{i(1-\delta) \beta n -b}{b-k+1}\rceil + 1=\lceil\frac{i(1-\delta) \beta -\beta + o(1)}{\beta + o(1)}\rceil + 1=\lceil i(1-\delta)-1+o(1)\rceil + 1 \le i$. \qed

Let $P$ be a polygon in $\Om=\{(x,y) \in \R^2: 0 \le x \le y \le 1\}$ and let $\inte(P)$ be the interior of $P$. Let
\begin{align*}
V_{n,k}(P)&=\left\{X \in \binom{[0,n]}{k}:\ \frac{1}{n}(\ul{X},\ol{X}) \in P\right\},\\
V_{n,k}^o(P)&=\left\{X \in \binom{[0,n]}{k}:\ \frac{1}{n}(\ul{X},\ol{X}) \in \inte(P)\right\}.
\end{align*}
\begin{lem}
We have
\[
|V_{n,k}^o(P)| \sim |V_{n,k}(P)| \sim \left(\frac{1}{(k-2)!}\iint_P (y-x)^{k-2}\,dx\,dy\right) n^k \text{ as } n \rightarrow \infty.
\]
\end{lem}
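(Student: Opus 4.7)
The plan is to count $|V_{n,k}(P)|$ by conditioning on the extremes $(\underline{X},\overline{X})$. If $\underline{X}=i$ and $\overline{X}=j$ with $i<j$, then the remaining $k-2$ entries form an arbitrary $(k-2)$-subset of the integer interval $(i,j)$, of which there are $\binom{j-i-1}{k-2}$. Writing
\[
L_n(P)=\left\{(i,j)\in \Z^2:\ 0\le i<j\le n,\ \tfrac{1}{n}(i,j)\in P\right\}
\]
and $L_n^o(P)$ analogously with $\inte(P)$ in place of $P$, one obtains
\[
|V_{n,k}(P)|=\sum_{(i,j)\in L_n(P)}\binom{j-i-1}{k-2},\qquad |V_{n,k}^o(P)|=\sum_{(i,j)\in L_n^o(P)}\binom{j-i-1}{k-2}.
\]

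Next, I would use the elementary estimate $\binom{m}{k-2}=\frac{m^{k-2}}{(k-2)!}+O(m^{k-3})$ as $m\to\infty$, valid uniformly for $0\le m\le n$. Since $L_n(P)$ has at most $(n+1)^2$ elements and each binomial term has $j-i\le n$, the error contributes at most $O(n^{k-1})$ to the whole sum. So it suffices to analyze
\[
\frac{1}{(k-2)!}\sum_{(i,j)\in L_n(P)}(j-i)^{k-2}=n^{k}\cdot\frac{1}{n^{2}}\sum_{(i,j)\in L_n(P)}g\!\left(\tfrac{i}{n},\tfrac{j}{n}\right),
\]
with $g(x,y)=(y-x)^{k-2}/(k-2)!$. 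Because $P\subseteq\Om$ is a polygon, it is Jordan measurable, and $g$ is continuous on $\Om$. The right-hand factor is therefore a standard two-dimensional Riemann sum of mesh $1/n$ restricted to $P$, and it converges to $\iint_P g\,dx\,dy$. This yields
\[
|V_{n,k}(P)|=n^{k}\iint_P\frac{(y-x)^{k-2}}{(k-2)!}\,dx\,dy+o(n^{k}),
\]
which is the desired asymptotic for the closed version.

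Finally, to get $|V_{n,k}^o(P)|\sim|V_{n,k}(P)|$, note that $L_n(P)\setminus L_n^o(P)$ consists of those lattice points $(i,j)$ with $(i/n,j/n)\in\partial P$. Since $P$ is a polygon, $\partial P$ is a finite union of line segments, and each segment contains at most $O(n)$ such lattice points. Each of them contributes at most $\binom{n}{k-2}=O(n^{k-2})$ to the sum, so
\[
\bigl||V_{n,k}(P)|-|V_{n,k}^o(P)|\bigr|=O(n^{k-1})=o(n^{k}),
\]
and both counts share the same leading asymptotics. The only point that requires a little care is the Riemann-sum step: strictly speaking one should separate out the lattice points whose unit cell lies entirely in $\inte(P)$ (contributing the full integral up to $o(n^k)$) from those meeting $\partial P$ (contributing $O(n^{k-1})$ by the same boundary estimate), but this is routine since $P$ is polygonal and $g$ is bounded on $\Om$.
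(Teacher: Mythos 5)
Your proof is correct and follows essentially the same route as the paper: decompose by the pair $(\ul{X},\ol{X})$, approximate $\binom{j-i-1}{k-2}$ by $\frac{(j-i)^{k-2}}{(k-2)!}$, and recognize the resulting sum as a Riemann sum for $\iint_P (y-x)^{k-2}\,dx\,dy$ over the Jordan-measurable polygon $P$, with the boundary lattice points contributing only $O(n^{k-1})$. The only difference is technical: you control the binomial approximation by a single additive error $O(m^{k-3})$ summed over $O(n^2)$ lattice points, whereas the paper runs a two-sided multiplicative squeeze (an upper bound valid everywhere and a lower bound on the truncated region $y-x\ge\delta$, then $\varepsilon,\delta\to 0$); both are sound, and yours is arguably the more direct bookkeeping.
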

\proof Let $i,j$ be integers with $0 \le i \le j \le n$. Obviously,
\[
|\{X \in \binom{[0,n]}{k}:\ \ul{X}=i, \ol{X}=j\}|=\binom{j-i-1}{k-2}.
\]
Thus
\[
|V_{n,k}(P)|= \sum_{\frac{1}{n}(i,j)\in P} \binom{j-i-1}{k-2} = \sum_{\frac{1}{n}(i,j)\in P} \binom{n(j/n-i/n)-1}{k-2}.
\]
For $0 \le z \le 1$, we have
\[
\binom{nz-1}{k-2} \le \frac{z^{k-2}}{(k-2)!} n^{k-2}.
\]
Accordingly,
\[
\frac{1}{n^k}|V_{n,k}(P)| \le \frac{1}{(k-2)!} \sum_{\frac{1}{n}(i,j)\in P} \frac{1}{n^2}(j/n-i/n)^{k-2}.
\]
The RHS is a Riemann sum for the integral $\iint_P (y-x)^{k-2}\,dx\,dy$, which shows that
\[
|V_{n,k}(P)| \lesssim \left(\frac{1}{(k-2)!}\iint_P (y-x)^{k-2}\,dx\,dy\right) n^k \text{ as } n \rightarrow \infty.
\]
For $\delta > 0$ let $P_{\delta}=\{(x,y) \in P: y-x \ge \delta\}$.
Clearly, $P_{\delta} \subseteq P$. Obviously, for any $\varepsilon > 0$ there is some $\delta >0$ such that
\[
\iint_{P_{\delta}} (y-x)^{k-2}\,dx\,dy \ge (1-\varepsilon) \iint_P (y-x)^{k-2}\,dx\,dy.
\]
Moreover, for any $z \ge \delta$ there is some $n_0$ such that for all $n > n_0$
\[
\frac{k-2}{nz} \le \varepsilon.
\]
This implies
\[
(1-\varepsilon)^{k-2}\frac{(nz)^{k-2}}{(k-2)!} \le
\left(1-\frac{k-2}{nz}\right)^{k-2}\frac{(nz)^{k-2}}{(k-2)!} \le
\binom{nz-1}{k-2}
\]
and further
\begin{align*}
\frac{1}{n^k}|V_{n,k}(P)| &\ge (1-\varepsilon)^{k-2} \frac{1}{(k-2)!} \sum_{\frac{1}{n}(i,j)\in P_{\delta}} \frac{1}{n^2}(j/n-i/n)^{k-2} \\
&\gtrsim (1-\varepsilon)^{k-1} \frac{1}{(k-2)!} \iint_P (y-x)^{k-2}\,dx\,dy.
\end{align*}
Now, with $\varepsilon \rightarrow 0$ we obtain
\[
|V_{n,k}(P)| \gtrsim \left(\frac{1}{(k-2)!} \iint_P (y-x)^{k-2}\,dx\,dy \right) n^k \text{ as } n \rightarrow \infty.
\]
The reasoning for $|V_{n,k}^o(P)|$ is the same. \qed

For the sake of brevity, we define the \emph{measure of the polygon} $P \subseteq \Om$ by
\[
\mu(P)=\frac{1}{(k-2)!} \iint_P (y-x)^{k-2}\,dx\,dy.
\]
\begin{corollary}
\label{cor2}
If $S \subseteq \binom{[0,n]}{k}$ is a family of sets that contains all $X \in \binom{[0,n]}{k}$ with
$\frac{1}{n}(\ul{X},\ol{X}) \in \inte(P)$ and some $X \in \binom{[0,n]}{k}$ with
$\frac{1}{n}(\ul{X},\ol{X})$ on the boundary of $P$, then
\[
|S| \sim \mu(P) n^k \text{ as } n \rightarrow \infty.
\]
\end{corollary}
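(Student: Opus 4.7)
The plan is to deduce the corollary by a direct sandwich using the preceding lemma. The hypothesis on $S$ is really the statement that
\[
V_{n,k}^o(P) \subseteq S \subseteq V_{n,k}(P),
\]
since $S$ contains every $X$ whose normalized pair $(\ul{X},\ol{X})/n$ lies in $\inte(P)$, and every $X \in S$ has $(\ul{X},\ol{X})/n$ in the closed polygon $P$ (the phrase ``and some $X$\dots on the boundary'' just allows $S$ to include an arbitrary subset of the boundary vertices). Therefore
\[
|V_{n,k}^o(P)| \le |S| \le |V_{n,k}(P)|.
\]

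Next I would apply the preceding lemma to both the lower and upper bounds: it gives
\[
|V_{n,k}^o(P)| \sim \mu(P)\, n^k \quad\text{and}\quad |V_{n,k}(P)| \sim \mu(P)\, n^k \text{ as } n \to \infty.
\]
Dividing the sandwich inequality by $\mu(P)\, n^k$ (the case $\mu(P)=0$ is trivial, as then both bounding quantities are $o(n^k)$ and so is $|S|$) and passing to the limit yields $|S|/(\mu(P) n^k) \to 1$, i.e., $|S| \sim \mu(P) n^k$, which is the claim.

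There is essentially no obstacle here: the entire content of the corollary has been front-loaded into the lemma. The only small subtlety worth flagging in the write-up is to justify why the hypothesis forces $S \subseteq V_{n,k}(P)$, namely that every element of $S$ named in the hypothesis has its normalized endpoint pair either in $\inte(P)$ or on $\partial P$, both of which are contained in $P$. Once that containment is noted, the corollary is immediate from the squeeze.
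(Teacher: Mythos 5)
Your proposal is correct and is exactly the intended argument: the hypothesis sandwiches $S$ between $V_{n,k}^o(P)$ and $V_{n,k}(P)$, and the preceding lemma shows both bounds are $\sim \mu(P)n^k$, which is why the paper states this as an immediate corollary without further proof.
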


\section{Definition and measure of crucial polygons}
Recall that we consider the case $b\sim \beta n$ with $\beta \in (0,0.5]$ and $q\in \N$ and $r\in \R$ such that $1=q\beta +r$.
We define in $\Om$ several sets of points. First let for $i=1,\dots,q$
\begin{align*}
A_i&=(ir,ir+q(\beta-r)),\\
B_i&=(r-\beta+i\beta, r-\beta+i\beta),\\
C_i&=(i\beta,i\beta).
\end{align*}
In the following we denote lines given by $y=ax+b$ (or, more generally, by $ax+by=c$) by $g_{y=ax+b}$ (or $g_{ax+by=c}$).
Note that the points $B_i$ and $C_i$ lie on $g_{y=x}$. Furthermore, the points $A_i$ lie above or on the line $g_{y=x+\beta}$ iff $r \le \frac{q-1}{q^2+q-1}$. This is the reason for the distinction between a) and b) in Theorem \ref{thm2}.

For the Case a), i.e., for $r \le \frac{q-1}{q^2+q-1}$, we define points $D_i$ and $E_i$ as the intersection points of the segments
$\ol{A_iB_i}$ resp. $\ol{A_iC_i}$ with the line $g_{y=x+\beta}$.
An easy computation yields with
\[
\gamma=\beta(1-1/q)
\]
that for $i=1,\dots,q$
\begin{align*}
D_i&=(r+(i-1) \gamma, r+(i-1)\gamma+\beta),\\
E_i&=(i\gamma,i\gamma+\beta).
\end{align*}
For the Case b), i.e., for $r > \frac{q-1}{q^2+q-1}$, we define points $F_i$ as those points that provide an equipartition into $q+1$ parts of the segment between $(0,0)$ and $(1,1)$ and points $G_i$ as the intersection points of the segments between $F_i$ and $(0,1)$ with the line $g_{y=x+\beta}$. It is straightforward that for $i=0,\dots,q+1$
\begin{align*}
F_i&=(i/(q+1),i/(q+1)),\\
G_i&=(i(1-\beta)/(q+1),i(1-\beta)/(q+1)+\beta).
\end{align*}

As for the Case a), let $E_q$ be the intersection point of the lines $g_{y=x+\beta}$ and $g_{y=q\beta}$.
Finally, we define an auxiliary point $H_1=(r,\beta)$. For the Case a) we also use the points $C_0=F_0=(0,0)$, $B_{q+1}=F_{q+1}=(1,1)$, $E_0=G_0=(0,\beta)$ and $D_{q+1}=G_{q+1}=(1-\beta,1)$.

In Figures \ref{fig:points a} and \ref{fig:points b} the points are illustrated for
$\beta=9/20$ (i.e., Case a) with $q=2, r=1/10 < 1/5$) as well as for $\beta=7/20$ (i.e., Case b) with $q=2, r=3/10 > 1/5$).

\begin{figure}[H]
  \centering
   \begin{tikzpicture}[scale=1.5]
   \foreach \s in {5} {
   \foreach \b in {0.45}{
   \foreach \q in {2} {
   \foreach \r in {0.1} {
   \foreach \g in {\b*(1-1/\q)} {

   \coordinate[label=below:{$C_0=F_0=(0,0)$}] (0) at (0,0);
   \coordinate[label=below:{$(1,0)$}] (10) at (\s,0);
   \coordinate[label=above:{$(0,1)$}] (01) at (0,\s);
   \coordinate[label=above:{$B_3=F_3=(1,1)$}] (11) at (\s,\s);
   \draw[] (0) -- (10) -- (11) -- (01) -- cycle;

   \foreach \i in {1,...,\q} {
   \coordinate[label=above:{$A_{\i}$}] (A\i) at (\s*\i*\r,{\s*(\i*\r+\q*(\b-\r))});
   \coordinate[label=below:{$B_{\i}$}] (B\i) at ({\s*(\r-\b+\i*\b)},{\s*(\r-\b+\i*\b)});
   \coordinate[label=below:{$C_{\i}$}] (C\i) at (\s*\i*\b,\s*\i*\b);
   \coordinate[label=right:{$D_{\i}$}] (D\i) at ({\s*(\r+(\i-1)*\g)},{\s*(\r+(\i-1)*\g+\b)});
   \coordinate[label=below:{$E_{\i}$}] (E\i) at ({\s*(\i*\g)},{\s*(\i*\g+\b)});

   \draw (A\i) -- (B\i);
   \draw (A\i) -- (C\i);
   };
   \coordinate[label=left:{$E_0=G_0$}] (E0) at (0,\s*\b);
   \coordinate[label=above:{$D_3=G_3$}] (D3) at ({\s*(1-\b)},\s);
   \foreach \i in {A,B,C,D,E} {
   \foreach \j in {1,...,\q} {
   \fill (\i\j) circle(1pt);

   };
   };
   };
   };
   };
   };
   \fill (E0) circle(1pt);
   \fill (D3) circle(1pt);
   \fill (0) circle(1pt);
   \fill (10) circle(1pt);
   \fill (01) circle(1pt);
   \fill (11) circle(1pt);
   \draw (0) -- (11);
   \draw (E0) -- (D3);

 };

\end{tikzpicture}
    \caption{Polygons and important points for $\beta=9/20$ (i.e., Case a) with $q=2, r=1/10 < 1/5$).}		
    \label{fig:points a}
\end{figure}
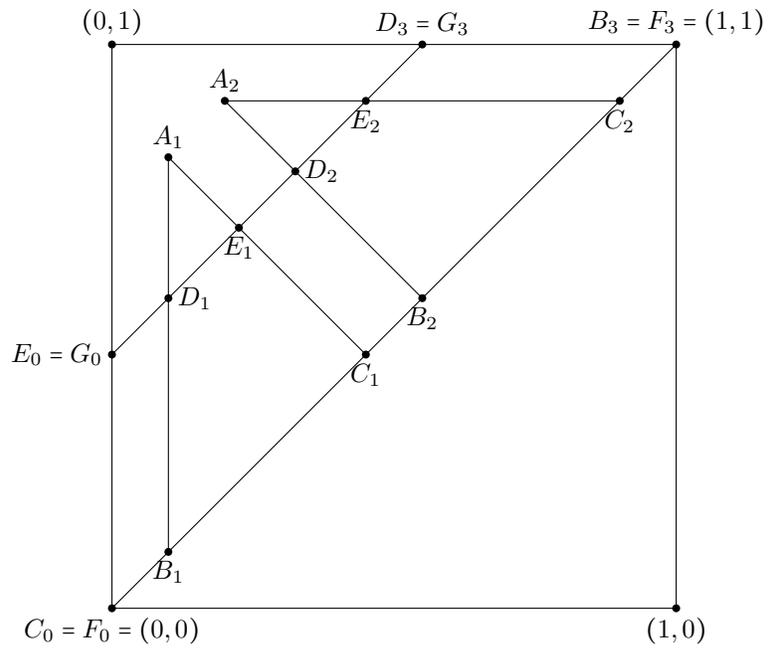	
	
\begin{figure}[H]		
\centering
 \begin{tikzpicture}[scale=1.5]
   \foreach \s in {5} {
   \foreach \b in {0.35}{
   \foreach \q in {2} {
   \foreach \r in {0.3} {
   \foreach \g in {\b*(1-1/\q)} {

   \coordinate[label=below:{$C_{0}=F_0=(0,0)$}] (0) at (0,0);
   \coordinate[label=below:{$(1,0)$}] (10) at (\s,0);
   \coordinate[label=above:{$(0,1)$}] (01) at (0,\s);
   \coordinate[label=above:{$B_3=F_3=(1,1)$}] (11) at (\s,\s);
   \draw[] (0) -- (10) -- (11) -- (01) -- cycle;

   \foreach \i in {1,...,\q} {
   \coordinate[label=45:{$A_{\i}$}] (A\i) at (\s*\i*\r,{\s*(\i*\r+\q*(\b-\r))});
   \coordinate[label=left:{$B_{\i}$}] (B\i) at ({\s*(\r-\b+\i*\b)},{\s*(\r-\b+\i*\b)});
   \coordinate[label=right:{$C_{\i}$}] (C\i) at (\s*\i*\b,\s*\i*\b);
  \coordinate[label=315:{$F_{\i}$}] (F\i) at ({\s*(\i/(\q+1))},{\s*(\i/(\q+1))});
  \coordinate[label=left:{$G_{\i}$}] (G\i) at ({\s*(\i*(1-\b)/(\q+1))},{\s*(\i*(1-\b)/(\q+1)+\b)});

   \draw (A\i) -- (B\i);
   \draw (A\i) -- (C\i);
   \draw (01) -- (F\i);
   };
   \coordinate[label=left:{$G_{0}$}] (G0) at (0,\s*\b);
   \coordinate[label=above:{$G_{3}$}] (G3) at ({\s*(1-\b)},\s);
   \coordinate[label=left:{$H_1$}] (H1) at (\s*\r,\s*\b);
   \coordinate[label=below:{$E_{2}$}] (E2) at ({\s*(2*\g)},{\s*(2*\g+\b)});
   \foreach \i in {A,B,C,F,G} {
   \foreach \j in {1,...,\q} {
   \fill (\i\j) circle(1pt);

   };
   };
   };
   };
   };
   };
   \fill (G0) circle(1pt);
   \fill (H1) circle(1pt);
   \fill (G3) circle(1pt);
   \fill (E2) circle(1pt);
   \fill (0) circle(1pt);
   \fill (10) circle(1pt);
   \fill (01) circle(1pt);
   \fill (11) circle(1pt);
   \draw (0) -- (11);
   \draw (G0) -- (G3);
   \draw (H1) -- (C1);
   \draw (E2) -- (A2);

 };
\end{tikzpicture}
    \caption{Polygons and important points for $\beta=7/20$ (i.e., Case b) with $q=2, r=3/10 > 1/5$).}
    \label{fig:points b}
\end{figure}
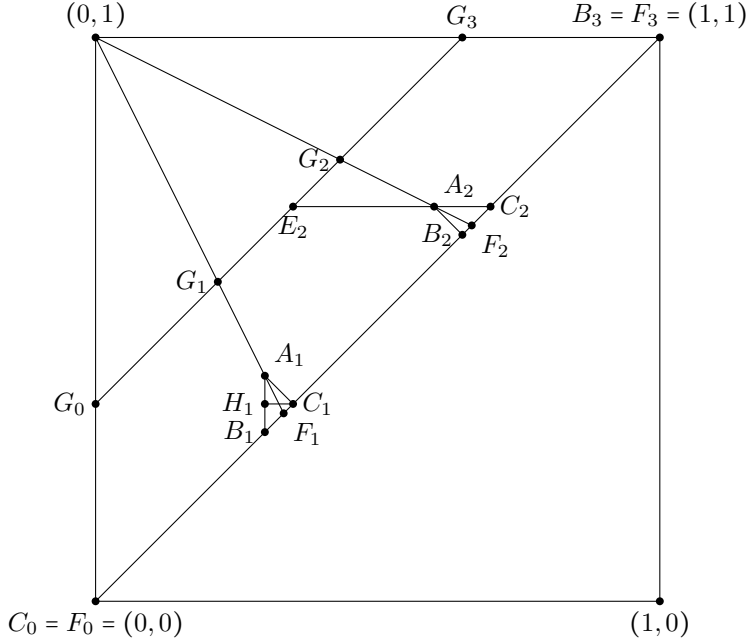

It is easy to check that the points $A_i$ lie on the segments $\ol{F_iG_i}$ (we have $\overrightarrow{F_iA_i}=(1-r(q+1)/\beta) \overrightarrow{F_iG_i}$) and that the points $F_0,B_1,F_1,C_1,\dots, B_q,F_q,C_q,F_{q+1}$ lie in this order on the line $g_{y=x}$.
It is also easy to see that $H_1$ lies on the segment $\ol{A_1B_1}$.

\begin{lem}
Let $0 \le s < t \le 1$, let $P_1=(\xi_1,\xi_1+s),P_2=(\xi_2,\xi_2+s)$ be points on the line $g_{y=x+s}$ with $0 \le \xi_1 \le \xi_2 \le 1-s$
and let $P_3=(\xi_3,\xi_3+t),P_4=(\xi_4,\xi_4+t)$ be points on the line $g_{y=x+t}$ with $0 \le \xi_3 \le \xi_4 \le 1-t$.
Let $u=\xi_2-\xi_1$ and $v=\xi_4-\xi_3$. Then
\[
\mu(P_1P_2P_4P_3) = \frac{1}{(k-2)!} \frac{1}{t-s} \left( \frac{1}{k}(v-u)(t^k-s^k)+\frac{tu-sv}{k-1}(t^{k-1}-s^{k-1})\right).
\]
\end{lem}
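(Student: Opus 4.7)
The plan is to reduce the double integral defining $\mu(P_1P_2P_4P_3)$ to a single integral by slicing the trapezoid along lines of constant $w=y-x$. This works because the integrand $(y-x)^{k-2}$ is constant on each such line.

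First I would determine, for each $w \in [s,t]$, the length $L(w)$ of the intersection of the trapezoid with $g_{y=x+w}$. The slanted edge $\overline{P_1P_3}$ connects $(\xi_1,\xi_1+s)$ to $(\xi_3,\xi_3+t)$; a linear parametrization shows that the point on this edge with $y-x=w$ has $x$-coordinate $\xi_1 + \frac{w-s}{t-s}(\xi_3-\xi_1)$. Analogously the edge $\overline{P_2P_4}$ meets $g_{y=x+w}$ at $x$-coordinate $\xi_2 + \frac{w-s}{t-s}(\xi_4-\xi_2)$. Subtracting gives
\[
L(w) = u + \frac{w-s}{t-s}(v-u),
\]
with $u=\xi_2-\xi_1$ and $v=\xi_4-\xi_3$.

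Next I would apply the change of variables $(x,y)\mapsto(x,w)$, where $w=y-x$, whose Jacobian is $1$, to obtain
\[
\iint_{P_1P_2P_4P_3}(y-x)^{k-2}\,dx\,dy = \int_s^t w^{k-2} L(w)\,dw = u\int_s^t w^{k-2}\,dw + \frac{v-u}{t-s}\int_s^t (w-s)w^{k-2}\,dw.
\]
The two elementary integrals equal $(t^{k-1}-s^{k-1})/(k-1)$ and $(t^k-s^k)/k - s(t^{k-1}-s^{k-1})/(k-1)$ respectively. Substituting and dividing by $(k-2)!$ yields $\mu(P_1P_2P_4P_3)$ as a sum of a $(t^k-s^k)$-term with coefficient $(v-u)/k$ and a $(t^{k-1}-s^{k-1})$-term with coefficient $[u(t-s)-(v-u)s]/(k-1) = (tu-sv)/(k-1)$, after pulling out the common factor $1/(t-s)$.

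There is no real conceptual obstacle; the only thing to watch is the algebraic simplification of the $(t^{k-1}-s^{k-1})$-coefficient, where the cancellation $u(t-s) - (v-u)s = tu-sv$ must be checked carefully to match the stated formula.
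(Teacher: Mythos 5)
Your proof is correct and follows essentially the same route as the paper, which simply invokes the coordinate transformation $x'=x+y$, $y'=y-x$ to turn the quadrangle into a trapezoid with bases parallel to the $x'$-axis and then computes the integral; your shear $(x,y)\mapsto(x,y-x)$ achieves the identical reduction to the one-dimensional integral $\int_s^t w^{k-2}L(w)\,dw$, and your evaluation and the simplification $u(t-s)-(v-u)s=tu-sv$ check out.
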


\proof The proof follows directly by computing the integrals, using the coordinate transformation $x'=x+y$, $y'=y-x$. This leads to a domain of integration in form of a trapezoid whose basis is parallel to the $x'$-axis. \qed

Since only the difference of the $\xi$-values has influence we have:

\begin{corollary}
\label{883}
Let $0 \le s < t \le 1$, let $P_1,P_2$ and $Q_1,Q_2$ be points of $\Omega$ on the line $g_{y=x+s}$
with $\overrightarrow{P_1P_2} = \overrightarrow{Q_1Q_2}$ showing to north east and let
$P_3,P_4$ and $Q_3,Q_4$ be points on the line $g_{y=x+t}$
with $\overrightarrow{P_3P_4} = \overrightarrow{Q_3Q_4}$ showing to north east.
Then
\[
\mu(P_1P_2P_4P_3) = \mu(Q_1Q_2Q_4Q_3).
\]
\end{corollary}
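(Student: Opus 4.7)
The plan is to invoke the preceding lemma for both quadrilaterals and to observe that its formula depends only on the parameters $s$, $t$, $u$, and $v$, not on the absolute $\xi$-positions of the corners. I would first parametrise each point as $P_i=(\xi_i^P,\xi_i^P+s)$ for $i=1,2$ and $P_i=(\xi_i^P,\xi_i^P+t)$ for $i=3,4$, and analogously $\xi_i^Q$ for the $Q_i$. Because all eight points lie in $\Omega$, the required inequalities $0\le\xi_1^P\le 1-s$ and $0\le\xi_3^P\le 1-t$ hold automatically, and the ``northeast'' condition on the vectors forces $\xi_1^P\le\xi_2^P$ and $\xi_3^P\le\xi_4^P$ (and analogously for the $Q$'s), so the lemma is applicable to both trapezoids.

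Next I would read off the difference parameters. From $\overrightarrow{P_1P_2}=\overrightarrow{Q_1Q_2}$, comparing $x$-components gives $\xi_2^P-\xi_1^P=\xi_2^Q-\xi_1^Q=:u$; from $\overrightarrow{P_3P_4}=\overrightarrow{Q_3Q_4}$, comparing $x$-components gives $\xi_4^P-\xi_3^P=\xi_4^Q-\xi_3^Q=:v$. The two trapezoids thus yield the same quadruple $(s,t,u,v)$ in the lemma's closed form
\[
\mu(\,\cdot\,)=\frac{1}{(k-2)!}\frac{1}{t-s}\left(\frac{1}{k}(v-u)(t^k-s^k)+\frac{tu-sv}{k-1}(t^{k-1}-s^{k-1})\right),
\]
and hence $\mu(P_1P_2P_4P_3)=\mu(Q_1Q_2Q_4Q_3)$.

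There is essentially no obstacle to overcome here: all of the real work was done inside the preceding lemma, and what remains is only the translation invariance of its formula along the parallel lines $g_{y=x+s}$ and $g_{y=x+t}$, which is manifest from the fact that only the scalar shifts $u$ and $v$ appear on the right-hand side.
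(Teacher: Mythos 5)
Your proposal is correct and is essentially the paper's own argument: the paper derives the corollary from the preceding lemma with the one-line remark that only the differences of the $\xi$-values enter the formula, which is exactly the observation you make explicit. Nothing further is needed.
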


By inserting the corresponding values and using the definition of the $c$-functions in Theorem \ref{thm2}, we obtain:

\begin{corollary}
\label{574}
We have for all possible $i$:
\begin{align*}
\mu(F_0F_{q+1}G_{q+1}G_0)&=\frac{\beta^{k-1}}{k!}(k-(k-1)\beta) = (q+1)c_2(\beta,k),\\
\mu(C_iB_{i+1}D_{i+1}E_i)&=\frac{\beta^{k-1}}{(k-1)!}r = (q+1)c_2(\beta,k)-qc_1(\beta,k),\\
\mu(B_iC_iE_iD_i)&=\frac{\beta^{k-1}}{k!}\left(k(\beta-r)-\frac{\beta(k-1)}{q}\right)
\\
&=(q+1)(c_1(\beta,k)-c_2(\beta,k)),\\
\mu(F_iF_{i+1}G_{i+1}G_i)&=\frac{1}{q+1} \mu(F_0F_{q+1}G_{q+1}G_0)=c_2(\beta,k),\\
\mu(A_iB_iC_i)&=\frac{(\beta-r)^k}{k!}q^{k-1}=(q+1)c_3(\beta,k),\\
\mu(A_iB_iF_i)&=(1-\frac{i}{q+1})\mu(A_iB_iC_i)=(q+1-i)c_3(\beta,k),\\
\mu(A_iF_iC_i)&=\frac{i}{q+1} \mu(A_iB_iC_i)=ic_3(\beta,k),\\
\mu(B_1C_1H_1)&=\frac{1}{q^{k-1}} \mu(A_1B_1C_1)=\frac{q+1}{q^{k-1}}c_3(\beta,k).
\end{align*}
\end{corollary}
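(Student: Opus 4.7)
The plan is to verify each of the eight identities in turn by applying the trapezoid-measure formula of the previous lemma to an explicit choice of $s,t,u,v$, and then simplifying via the relation $1 = q\beta + r$ together with the definitions of $c_1,c_2,c_3$ from Theorem \ref{thm2}. Corollary \ref{883} says that only the differences of $x$-coordinates on each of the two parallel lines matter, so I can read $u$ and $v$ directly from the coordinate formulas given in the previous section, without worrying about translating polygons into a canonical position.

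First I would dispatch the four quadrilaterals $F_0F_{q+1}G_{q+1}G_0$, $F_iF_{i+1}G_{i+1}G_i$, $C_iB_{i+1}D_{i+1}E_i$, and $B_iC_iE_iD_i$. Each has its lower side on $g_{y=x}$ and its upper side on $g_{y=x+\beta}$, so $s=0$ and $t=\beta$, and the lemma's formula collapses to $\mu = \frac{\beta^{k-1}}{k!}\bigl(u + v(k-1)\bigr)$. The offsets $(u,v)$ are read off as $(1,1-\beta)$, $(\tfrac{1}{q+1},\tfrac{1-\beta}{q+1})$, $(r,r)$, and $(\beta-r,\gamma-r)$ respectively; direct substitution (plus $\gamma = \beta(1-1/q)$ in the last case) yields the four raw closed-form expressions. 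The alternative expressions in terms of $c_1,c_2$ then amount to two algebraic identities $(q+1)c_2 - qc_1 = \beta^{k-1}r/(k-1)!$ and $(q+1)(c_1-c_2) = \frac{\beta^{k-1}}{k!}(k(\beta-r)-(k-1)\beta/q)$, both of which reduce to elementary polynomial algebra after cancelling $\beta^{k-1}/k!$ and replacing $r$ by $1-q\beta$.

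Next I would treat the four triangles as degenerate trapezoids with $v=0$, so that the formula simplifies further to $\mu = u\,t^{k-1}/k!$. For $A_iB_iC_i$ one uses $s=0$, $t=q(\beta-r)$, $u=\beta-r$, giving directly $q^{k-1}(\beta-r)^k/k!$. The ratios involving $A_iB_iF_i$ and $A_iF_iC_i$ follow because $F_i$ lies on $\overline{B_iC_i}$ (a short calculation using $1 = q\beta+r$ shows it divides the segment in ratio $(q+1-i):i$ from $B_i$), and the two subtriangles share apex $A_i$, so their measures scale linearly with the base $u$. Finally $\mu(B_1C_1H_1)$ comes out from $s=0$, $t=\beta-r$, $u=\beta-r$, $v=0$, yielding $(\beta-r)^k/k!$; the factor $1/q^{k-1}$ relative to $\mu(A_1B_1C_1)$ is then immediate.

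The main obstacle is purely bookkeeping: matching each polygon to its correct quadruple $(s,t,u,v)$, getting the cyclic order of the vertices right so that the $\xi$-increments have the sign required for Corollary \ref{883}, and organising the algebraic manipulations so that each final expression lands in the compact $c_1,c_2,c_3$ form. No single calculation is hard, but with eight polygons and three interlocking parameters it is essential to tabulate the quadruples $(s,t,u,v)$ systematically before substituting into the master formula.
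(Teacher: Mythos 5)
Your proposal is correct and follows exactly the route the paper intends: the paper's entire justification for this corollary is the one-line remark ``by inserting the corresponding values and using the definition of the $c$-functions,'' i.e., reading off the offsets $u,v$ and heights $s,t$ for each polygon, applying the trapezoid formula (with triangles as degenerate trapezoids, $v=0$), and simplifying via $r=1-q\beta$. Your tabulated quadruples $(s,t,u,v)$ and the resulting identities all check out.
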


\section{Proof of the lower bounds for the bandwidth in Theorem \ref{thm2}}

We fix some $\varepsilon > 0$, where $\varepsilon$ is sufficiently small, in particular $\varepsilon < 1$.
Let $P$ be a polygon in $\Om$. In the following we work with dilations of $P$ around a given centerpoint by factors of the form $(1-\varepsilon)$ and $(1+\varepsilon)$, respectively. We denote the new polygons by $\ul{P}$ and $\ol{P}$, respectively. But we emphasize that $\ul{P}$ and $\ol{P}$ depend on $\varepsilon$ and on the centerpoint of dilation. Note that
\[
\lim_{\varepsilon \rightarrow 0} \mu(\ul{P}) = \lim_{\varepsilon \rightarrow 0} \mu(\ol{P}) = \mu(P).
\]
Though the following result follows also directly from Lemma \ref{338}, we prove it as an example for our
polygon-method:
\begin{lem}
\label{628}
We have
\[
|\V| \sim \mu(F_0F_{q+1}G_{q+1}G_0) n^k = (q+1) c_2(\beta,k) n^k.
\]
\end{lem}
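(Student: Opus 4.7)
The plan is to identify $\V$ with a family of $k$-subsets whose normalized extremes fall in the polygon $P = F_0F_{q+1}G_{q+1}G_0$ and then invoke the preceding lemma (or Corollary \ref{cor2}) directly. First I would observe that $P$ is precisely the region $\{(x,y) \in \Om : y-x \le \beta\}$: the vertices $F_0=(0,0)$ and $F_{q+1}=(1,1)$ lie on $g_{y=x}$, while $G_0=(0,\beta)$ and $G_{q+1}=(1-\beta,1)$ lie on $g_{y=x+\beta}$, so $P$ is the trapezoid in $\Om$ bounded above by the line $y=x+\beta$. By the definition of $\V$, a set $X \in \binom{[0,n]}{k}$ lies in $\V$ iff $\ol{X}-\ul{X} \le b$, i.e., iff $\frac{1}{n}(\ol{X}-\ul{X}) \le b/n$.

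Next I would use $b \sim \beta n$ to sandwich $\V$ between two such polygon families. Fix $\varepsilon > 0$. For $n$ sufficiently large we have $b/n \in (\beta-\varepsilon,\beta+\varepsilon)$. Setting
\[
P^-_\varepsilon = \{(x,y) \in \Om : y-x \le \beta-\varepsilon\}, \qquad P^+_\varepsilon = \{(x,y) \in \Om : y-x \le \beta+\varepsilon\},
\]
we obtain
\[
V_{n,k}^o(P^-_\varepsilon) \subseteq \V \subseteq V_{n,k}(P^+_\varepsilon)
\]
for all sufficiently large $n$. By the preceding lemma, the outer sets have sizes $\sim \mu(P^-_\varepsilon) n^k$ and $\sim \mu(P^+_\varepsilon) n^k$, respectively.

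Finally, since the integrand $(y-x)^{k-2}$ is bounded on $\Om$ and $P \triangle P^\pm_\varepsilon$ has Lebesgue measure $O(\varepsilon)$, we have $\mu(P^\pm_\varepsilon) \to \mu(P)$ as $\varepsilon \to 0$. Passing $\varepsilon \to 0$ in the sandwich then yields $|\V| \sim \mu(P) n^k$. The explicit evaluation $\mu(F_0F_{q+1}G_{q+1}G_0) = (q+1)c_2(\beta,k)$ is already recorded in Corollary \ref{574}, which completes the proof. There is essentially no obstacle here beyond the identification of $P$ with the trapezoid $\{y-x \le \beta\} \cap \Om$; the analytic work is entirely carried out by the earlier lemma, and this argument is exactly the promised ``example for our polygon-method'' announced before the lemma.
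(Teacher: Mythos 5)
Your proof is correct and follows essentially the same route as the paper: identify $F_0F_{q+1}G_{q+1}G_0$ with the trapezoid $\{(x,y)\in\Om : y\le x+\beta\}$, sandwich $\V$ between the lattice families of slightly shrunken and slightly enlarged polygons, apply the counting lemma/Corollary \ref{cor2}, and let $\varepsilon\to 0$. The only cosmetic difference is that you perturb the strip additively to $y-x\le\beta\pm\varepsilon$ while the paper dilates the polygon by factors $(1\pm\varepsilon)$ about $F_0$ and then embeds the dilates in the strips $y\le x+(1\pm\varepsilon)\beta$; the two devices are interchangeable here.
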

\proof Let briefly $T=F_0F_{q+1}G_{q+1}G_0$. Note that $T=\{(x,y) \in \Om: y \le x+ \beta\}.$
We choose $F_0$ as the centerpoint of dilation so that e.g. the vertices of $\ul{T}$ are given by $\ul{F}_0=(0,0),
\ul{F}_{q+1}=(1-\varepsilon)(1,1), \ul{G}_{q+1}=(1-\varepsilon)(1-\beta,1), \ul{G}_{0}=(1-\varepsilon)(0,\beta)$.
If the dilation factors are $(1-\varepsilon)$ and $(1+\varepsilon)$, respectively, then
$\ul{T} \subseteq \{(x,y) \in \Om: y \le x+(1-\varepsilon)\beta\}$ and
$\ol{T}\cap \Om \subseteq \{(x,y) \in \Om: y \le x+(1+\varepsilon)\beta\}$.
It is easy to check that for sufficiently large $n$ and any $X \in \binom{[0,n]}{k}$ the following implications are true:
\[
\frac{1}{n}(\ul{X},\ol{X}) \in \ul{T} \Rightarrow X \in \V \Rightarrow \frac{1}{n}(\ul{X},\ol{X}) \in \ol{T}\cap \Om.
\]
Thus
\[
|V_{n,k}(\ul{T})| \le |\V| \le |V_{n,k}(\ol{T} \cap \Om)|
\]
and hence by Corollary \ref{cor2}
\[
\mu(\ul{T})n^k \lesssim |\V| \lesssim \mu(\ol{T})n^k.
\]
With $\varepsilon \rightarrow 0$ we obtain (using also Corollary \ref{574})
\[
|\V| \sim \mu(T) n^k=(q+1) c_2(\beta,k) n^k.
\]
\qed

Now we prove the first asymptotic lower bound:
\begin{lem}
We have
\[
B(\G) \gtrsim \frac{1}{q} \mu(F_0C_qE_qG_0) n^k = c_1(\beta,k) n^k \text{ as } n \rightarrow \infty.
\]
\end{lem}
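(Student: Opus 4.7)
The plan is to exploit the standard density-vs-diameter lower bound for bandwidth: if $S \subseteq V(G)$ has the property that any two of its vertices are at graph-distance at most $d$ in $G$, then $B(G)\ge (|S|-1)/d$. Indeed, in any proper numbering $f$ of $G$, if $X_{\min},X_{\max}\in S$ are the elements of smallest and largest label, their labels differ by at least $|S|-1$, but a shortest path between them uses at most $d$ edges, so the label gap is also at most $d\,B_f(G)$. The task thus reduces to exhibiting a large set in $\G$ of bounded graph-diameter.

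The natural candidate is $P = F_0 C_q E_q G_0$: a point $(x,y) \in P$ satisfies $x \ge 0$ and $y \le q\beta$, which in integer coordinates translates into a pair of bounds that match the hypothesis of Lemma~\ref{437} with $i = q$ once one introduces a small slack. Concretely, I would fix $\varepsilon > 0$ small, let $\ul{P}$ denote the dilation of $P$ by factor $1-\varepsilon$ centered at $F_0=(0,0)$, and set $S = V_{n,k}(\ul{P})$. Since points in $\ul P$ satisfy both $y \le (1-\varepsilon)q\beta$ and $y-x \le (1-\varepsilon)\beta$, it follows that $S \subseteq \V$ for sufficiently large $n$, and for any $X,Y \in S$ both $\ol{X}-\ul{Y}$ and $\ol{Y}-\ul{X}$ are bounded above by $(1-\varepsilon)q\beta n \lesssim q(1-\varepsilon)\beta n$. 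Lemma~\ref{437} (with $i=q$ and $\delta=\varepsilon$) then yields $d_{\G}(X,Y)\le q$ for all sufficiently large $n$, and hence $B(\G) \ge (|S|-1)/q$.

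By the area-counting lemma of Section~\ref{451} (cf.\ Corollary~\ref{cor2}), $|S| \sim \mu(\ul{P})\,n^k$, so $B(\G) \gtrsim \tfrac{1}{q}\mu(\ul{P})\,n^k$. Letting $\varepsilon\to 0$ gives $B(\G) \gtrsim \tfrac{1}{q}\mu(P)\,n^k$, exactly as at the end of the proof of Lemma~\ref{628}. Finally, a direct application of the trapezoid formula of the previous section with $s=0$, $t=\beta$, $u=q\beta$, $v=(q-1)\beta$ gives $\mu(P)=\tfrac{\beta^k}{k!}(k(q-1)+1)$, whence $\tfrac{1}{q}\mu(P) = \tfrac{\beta^k}{k!}\bigl(k-\tfrac{k-1}{q}\bigr) = c_1(\beta,k)$, as desired.

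The one point that requires care is the choice of dilation: we must \emph{shrink} $P$ rather than enlarge it, so that the hypothesis of Lemma~\ref{437} is satisfied with a margin that vanishes only in the final limit $\varepsilon \to 0$. Once this is set up properly the rest is bookkeeping, and in particular no second path-counting argument is needed since Lemma~\ref{437} already packages the geometric-to-combinatorial distance estimate for us.
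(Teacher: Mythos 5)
Your proposal is correct and follows essentially the same route as the paper: shrink the polygon $F_0C_qE_qG_0$ by a factor $1-\varepsilon$ about $F_0$, use Lemma~\ref{437} with $i=q$ to bound pairwise distances by $q$, apply the Chv\'atal-type density/diameter bound, invoke Corollary~\ref{cor2} for the vertex count, and let $\varepsilon\to 0$. The only (immaterial) difference is that you evaluate $\mu(F_0C_qE_qG_0)$ directly from the trapezoid formula, while the paper obtains it as $\mu(F_0F_{q+1}G_{q+1}G_0)-\mu(C_qB_{q+1}D_{q+1}E_q)$ via Corollary~\ref{574}; both give $qc_1(\beta,k)$.
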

\proof Let briefly $Q=F_0C_qE_qG_0$. For the dilation, we choose again $F_0$ as the centerpoint. With the factor $(1-\varepsilon)$ we obtain $\ul{Q}$. Let $G'$ be the subgraph of $\G$ induced by
\[
V'=\left\{X  \in \V: \frac{1}{n}(\ul{X},\ol{X}) \in \ul{Q}\right\}.
\]
Note that $\frac{1}{n}\ol{X} \le q\beta(1-\varepsilon)$ for all $X \in V'$.

Now we show that for sufficiently large $n$
\begin{equation}
\label{668}
\diam(G') \le q.
\end{equation}
Let $X$ and $Y$ be any two distinct vertices of $G'$.
Then $0 \le \ul{X} \le \ol{X} \le q \beta (1-\varepsilon) n$ and $0 \le \ul{Y} \le \ol{Y} \le q \beta (1-\varepsilon) n$. By Lemma \ref{437}, $X$ and $Y$ have distance at most $q$ if $n$ is sufficiently large,
which proves \eqref{668}.
From the Chv\'atal bound \eqref{lbb} it follows that
\[
B(\G) \ge B(G') \ge \frac{|V'|-1}{q} \gtrsim \frac{\mu(\ul{Q})}{q} n^k
\]
and with $\varepsilon \rightarrow 0$
\[
B(\G)  \gtrsim \frac{\mu(Q)}{q} n^k.
\]
Let briefly $Q_q=C_qB_{q+1}D_{q+1}E_q = C_qF_{q+1}G_{q+1}E_q$ and recall $T=F_0F_{q+1}G_{q+1}G_0$. Then $Q=T \setminus Q_q$ and hence
\[
\mu(Q)=\mu(T)-\mu(Q_q).
\]
By Corollary \ref{574},
\[
\mu(Q)=(q+1)c_2(\beta,k)-\left((q+1)c_2(\beta,k)-qc_1(\beta,k)\right) = q c_1(\beta,k).
\]
\qed

Now we prove the second asymptotic lower bound, which applies only for the second case:
\begin{lem}
If $r > \frac{q-1}{q^2+q-1}$ then
\begin{align*}
B(\G) &\gtrsim \left(\mu(F_0F_1G_1G_0)+\frac{1}{q+1}\mu(B_1C_1H_1)\right) n^k\\
&=\left(c_2(\beta,k)+\frac{1}{q^{k-1}}c_3(\beta,k)\right) n^k.
\end{align*}
\end{lem}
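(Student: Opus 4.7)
\proof (Proof plan.)
The plan is to adapt the polygon/diameter/Chv\'atal template used in the preceding lemma, but now augmented so that the clique $V_{n,k}(B_1C_1H_1)$ contributes the extra summand $\tfrac{1}{q+1}\mu(B_1C_1H_1)$ on top of the slice contribution $\mu(F_0F_1G_1G_0)$. Let $P_1=F_0F_1G_1G_0$ and $P_2=B_1C_1H_1$, and let $\ul{P_1}$ and $\ul{P_2}$ denote their $(1-\varepsilon)$-dilations around $F_0$ and $B_1$ respectively. Set $V_i'=V_{n,k}(\ul{P_i})$ and work with the subgraph $G'$ of $\G$ induced by $V_1'\cup V_2'$.

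First I would record three structural facts. \textbf{(a)} $V_2'$ is a clique in $\G$: for sufficiently large $n$, every vertex of $V_2'$ is a $k$-subset of an interval of length at most $(1-\varepsilon)(\beta-r)n<b$, so any two of them together fit into an interval of length $\le b$. \textbf{(b)} Every $Y\in V_2'$ is adjacent in $\G$ to $X_0=[0,k-1]$, since $\ul X_0=0$ and $\ol Y\le\beta n=b$ forces $\ol{X_0\cup Y}-\ul{X_0\cup Y}\le b$. \textbf{(c)} By Corollary~\ref{193}, for every $Y\in V_2'$ and every $X\in\V$ one has $d(X,Y)\le q$, because $\ul Y\ge rn$ and $\ol X\le n$ give $\ol X-\ul Y\le n-rn=q\beta n$.

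Now let $f$ be any proper numbering of $\G$ with bandwidth $B$, write $\alpha=f(X_0)$, and set $X^+=f^{-1}(|\V|)$. By (b), $f(V_2')\subseteq[\alpha-B,\alpha+B]$; by (c) with $X=X^+$, also $f(V_2')\subseteq[|\V|-qB,|\V|+qB]$. In the relevant regime $|\V|-\alpha>(q-1)B$ the intersection has length $(q+1)B-(|\V|-\alpha)$, so fitting the $|V_2'|$ distinct labels of $V_2'$ into this window yields
\[
(q+1)B\;\ge\;(|\V|-\alpha)+|V_2'|-1.
\]
The symmetric argument, applied to the mirror clique $V_{n,k}(\ul{P_2})^{c}$ (adjacent to $X_{q+1}=[n-k+1,n]$) together with $X^-=f^{-1}(1)$, gives $(q+1)B\ge f(X_{q+1})-1+|V_2'|-1$. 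To control the residual freedom in $\alpha$ (resp.\ $f(X_{q+1})$) one combines these with the ordinary Chv\'atal bound $(q+1)B\ge|\V|-1$ applied to the whole of $\V$, and uses the additional Chv\'atal bound on $V_1'$ (whose diameter in $\G$ is bounded via Lemma~\ref{437}, since $\ol Y-\ul X\le (1+q\beta)n/(q+1)<q\beta n$ on $V_1'$). A careful weighted sum of these inequalities should produce
\[
(q+1)B\;\gtrsim\;|V_1'|\,+\,\frac{|V_2'|}{q+1}\,+\,o(n^k),
\]
and letting $\varepsilon\to 0$ then gives the claim via Corollary~\ref{574}.

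The main obstacle I expect is exactly this last combination. The plain Chv\'atal bound on $\V$ yields only $c_2n^k$, and the plain clique bound on $V_2'$ yields only $|V_2'|-1\sim\tfrac{q+1}{q^{k-1}}c_3\,n^k$; neither is enough, and a naive sum of the two symmetric clique-augmented inequalities loses a factor of $2$. Getting the coefficient $\tfrac{1}{q+1}$ in front of $|V_2'|$ on the nose requires carefully exploiting that the clique labels are pinned into a window of length $B$ near $\alpha$ while the graph $\G$ has diameter $q+1$, so each ``saved'' label near $\alpha$ can be charged to one of the $q+1$ legs of a diameter-realising path. Making this charging argument rigorous, rather than losing constants, is the delicate point of the proof.
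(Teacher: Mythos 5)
Your facts (a)--(c) are correct and your overall strategy shares the paper's core mechanism: pin the labels of the clique $V_{n,k}(\ul{P_2})$ into a short window via adjacency to a ``low'' vertex, use the distance-$q$ bound to the top-labelled vertex, and extract $(q+1)B\ge |\V|+|V_{n,k}(\ul{P_2})|-2$. But the gap you flag at the end is real and is not closable by ``a careful weighted sum'' of the inequalities you list. The culprit is your choice of anchor: you anchor at the \emph{fixed} vertex $X_0=[0,k-1]$, whose label $\alpha$ is uncontrolled, so the clique is only confined to a window $[\alpha-B,\alpha+B]$ of length $2B$. If $\alpha$ sits in the middle of the label range, say $|\V|-(q-1)B\le\alpha\le 1+(q-1)B$, then the three-way intersection $[\alpha-B,\alpha+B]\cap[1,1+qB]\cap[|\V|-qB,|\V|]$ yields only $B\ge\tfrac{1}{2}(|V_2'|-1)$ together with $B\ge\tfrac{|\V|-1}{2(q-1)}$, and for $q\ge 3$ no convex combination of these with the Chv\'atal bounds on $\V$ and on $V_1'$ reaches $\left(c_2+\tfrac{1}{q^{k-1}}c_3\right)n^k$ (the achievable coefficient of $c_2$ caps at $\tfrac{q+1}{2(q-1)}\le 1$, with nothing left over for the $c_3$ term). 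The same obstruction is why adding the two mirror inequalities loses a factor of $2$, as you observed. (A small separate slip: your target display should read $B\gtrsim |V_1'|+\tfrac{|V_2'|}{q+1}$, not $(q+1)B\gtrsim\dots$, since $|V_1'|\sim c_2n^k$ already.)

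The paper's resolution is to anchor at the label-extremal vertices instead of at a fixed geometric vertex: set $X_V=f^{-1}(1)$, $X^V=f^{-1}(|\V|)$ and case-split on whether $\ul{X}_V\le(r+\varepsilon)n$ and $\ul{X}^V\le(r+\varepsilon)n$. If both hold, $X_V$ and $X^V$ have distance at most $2$ by Lemma \ref{437} and $B\ge\tfrac{|\V|-1}{2}\sim\tfrac{q+1}{2}c_2n^k$ already exceeds the target; if neither holds, they have distance at most $q$ and $B\ge\tfrac{|\V|-1}{q}$ suffices. In the mixed case, $X_V$ is adjacent to \emph{every} vertex of $V_{n,k}(\ul R)$ (your fact (b), applied to $X_V$ rather than to $X_0$), so the clique's labels lie in $[1,1+B]$ --- a window of length $B$, not $2B$ --- and chaining $f(X^R)\le 1+B$, $f(X_R)\le f(X^R)-(|V_{n,k}(\ul R)|-1)$ and $|\V|=f(X^V)\le f(X_R)+qB$ gives $(q+1)B\ge|\V|+|V_{n,k}(\ul R)|-2$ with exactly the right constants. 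With this case analysis in place of your combination step, your facts (a)--(c) do assemble into a complete proof.
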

\proof Let briefly $R=B_1C_1H_1$. We choose $\frac{1}{2}(B_1+C_1)=\frac{1}{2}(r+\beta,r+\beta)$ as the centerpoint and $(1-\frac{2}{\beta-r}\varepsilon)$ as the factor of dilation and thus obtain $\ul{R}$ from $R$. The vertices of $\ul{R}$ are
$\ul{B}_1=(r+\varepsilon, r+\varepsilon)$, $\ul{C}_1=(\beta-\varepsilon,\beta-\varepsilon)$ and $\ul{H}_1=(r+\varepsilon, \beta - \varepsilon)$.

Let $f$ be a bandwidth numbering of $\G$. Let $X_V$ and $X^V$ be those vertices for which $f(X_V)=1$ and $f(X^V)=|\V|$.

{\bf Case 1.} $\ul{X}_V \le n(r+\varepsilon)$ and $\ul{X}^V \le n(r+\varepsilon)$.

Then, for sufficiently small $\varepsilon$ and sufficiently large $n$, $\ol{X}_V,\ol{X}^V \le n(r+\varepsilon+\beta+\varepsilon) \le 2\beta(1-\varepsilon) n$. Lemma \ref{437} implies that $X_V$ and $X^V$ have distance at most $2$
and by
Lemma \ref{628}, we have
\[
B(\G) \ge \frac{|\V|-1}{2} \gtrsim \frac{q+1}{2}c_2(\beta,k) n^k \gtrsim \left(c_2(\beta,k)+\frac{1}{q^{k-1}}c_3(\beta,k)\right) n^k.
\]

{\bf Case 2.} $\ul{X}_V > n(r+\varepsilon)$ and $\ul{X}^V > n(r+\varepsilon)$.

Then, for sufficiently large $n$, $\ol{X}^V-\ol{X}_V,\ol{X}_V-\ol{X}^V \le n-n(r+\varepsilon) = (1-r-\varepsilon)n = q\beta(1-\frac{\varepsilon}{q\beta}) n$.
Lemma \ref{437} implies that $X_V$ and $X^V$ have distance at most $q$
and by
Lemma \ref{628}, we have

\[
B(\G) \ge \frac{|\V|-1}{q} \gtrsim \frac{q+1}{q}c_2(\beta,k)n^k \gtrsim  \left(c_2(\beta,k)+\frac{1}{q^{k-1}}c_3(\beta,k)\right) n^k.
\]

{\bf Case 3.} $\ul{X}_V \le n(r+\varepsilon)$ and $\ul{X}^V > n(r+\varepsilon)$.

Let $X_R=\argmin\{f(X): X \in V_{n,k}(\ul{R})\}$ and $X^R=\argmax\{f(X): X \in V_{n,k}(\ul{R})\}$.
Clearly,
\begin{equation}
\label{906}
f(X^R)-f(X_R) \ge |V_{n,k}(\ul{R})|-1.
\end{equation}
In view of $n(r+\varepsilon) \le \ol{X}_R \le n(\beta-\varepsilon)$, $n(r+\varepsilon) \le \ol{X}_V \le n(r+\varepsilon+\beta+\varepsilon)$ and $r < \beta$ we have
$\ol{X}_R-\ul{X}_V \le n(\beta-r-2\varepsilon) \le b$ and $\ol{X}_V-\ul{X}_R \le n(\beta+\varepsilon) \le b$
if  $\varepsilon$ is sufficiently small and  $n$ is sufficiently large. Thus, by Lemma \ref{164}, $X_V$ and $X_R$ are adjacent and hence
\begin{equation}
\label{913}
f(X^R)-f(X_V) \le B(\G).
\end{equation}
In view of $\ul{X}_R,\ul{X}^V\ge n(r+\varepsilon)$ and analogously to Case 2, $X_R$ and $X^V$ have distance at most $q$ and hence
\begin{equation}
\label{919}
f(X^V)-f(X_R) \le qB(\G).
\end{equation}
From \eqref{906}, \eqref{913} and \eqref{919} we obtain
\[
f(X^V)-f(X_V) \le (q+1)B(\G)-(|V_{n,k}(\ul{R})|-1)
\]
and thus by Corollary \ref{574} and Lemma \ref{628}
\[
B(\G) \gtrsim c_2(\beta,k)n^k+\left(1-\frac{2}{\beta-r} \varepsilon \right)^2 \frac{1}{q^{k-1}}c_3(\beta,k) n^k
\]
and with $\varepsilon \rightarrow 0$ the assertion follows. \qed

\section{Proof of the upper bounds for the bandwidth in Theorem \ref{thm2}}
\label{1122}

In the following, we present two proper numberings $f$ of $\G$ whose bandwidth is asymptotically equal to the asserted upper bounds.
As in Section \ref{sec3}, we define a total order $\V=S_1 \oplus \dots \oplus S_l$ with suborders given by means of polygons. In order to avoid intersections on the boundaries we explicitly describe which part of the boundary is deleted, though the ordering can be given on the whole polygon.
For example, a notation of the form $C_0B_1D_1E_0 \setminus \ol{B_1D_1}$ means that the segment $\ol{B_1D_1}$ is deleted from the closed quadrangle $C_0B_1D_1E_0$.

{\bf Case a)} $r \le \frac{q-1}{q^2+q+1}.$

We define the total order $\le$ as follows:
\begin{multline*}
\V=V_{n,k}(C_0B_1D_1E_0 \setminus \ol{B_1D_1}) \oplus V_{n,k}(B_1C_1E_1D_1 \setminus \ol{C_1E_1}) \oplus V_{n,k}(C_1B_2D_2E_1 \setminus \ol{B_2D_2}) \\
\oplus \dots \oplus V_{n,k}(B_qC_qE_qD_q \setminus \ol{C_qE_q}) \oplus V_{n,k}(C_qB_{q+1}D_{q+1}E_q).
\end{multline*}
We still have to define the ordering of the elements of $V_{n,k}(C_iB_{i+1}D_{i+1}E_i)$, $i=0,\dots,q$, and of $V_{n,k}(B_iC_iE_iD_i)$, $i=1,\dots,q$
(here we may allow the complete boundary).

If $(\ul{X},\ol{X})=(\ul{Y},\ol{Y})$ we set in both cases $X \le_i Y$ if $X \le_{lex} Y$. Thus let $(\ul{X},\ol{X})\neq (\ul{Y},\ol{Y})$.

First we discuss $V_{n,k}(C_iB_{i+1}D_{i+1}E_i)$. We use a new coordinate system with the same origin and with transformation matrix and inverse transformation matrix
\[
M_i=
\begin{pmatrix}
1&-i/q\\1&1-i/q
\end{pmatrix} \text{ and }
M_i^{-1}=
\begin{pmatrix}
1-i/q&i/q\\-1&1
\end{pmatrix}.
\]
Thus the new coordinate axes have direction of $\overrightarrow{C_iB_{i+1}}$ and $\overrightarrow{C_iE_i}$. The ordering is a lexicographic ordering of the points $\frac{1}{n} (\ul{X},\ol{X})$ with respect to the new coordinate system, i.e., for $X,Y \in V_{n,k}(C_iB_{i+1}D_{i+1}E_i)$ and $(\ul{X},\ol{X})\neq (\ul{Y},\ol{Y})$ we set
\[
X \le _i Y \text{ if } ((1-i/q) \ul{X}+i \ol{X}/q,-\ul{X}+\ol{X}) \le_{lex} ((1-i/q) \ul{Y}+i \ol{Y}/q,-\ul{Y}+\ol{Y}),
\]
see Figure \ref{fig:ub1}.

\begin{figure}[H]
\centering
\begin{tikzpicture}[scale=1.5]
   \foreach \s in {5} {
   \foreach \b in {0.45}{
   \foreach \q in {2} {
   \foreach \r in {0.1} {
   \foreach \g in {\b*(1-1/\q)} {

   \coordinate[label=below:{$C_0=(0,0)$}] (0) at (0,0);
   \coordinate[label=below:{$(1,0)$}] (10) at (\s,0);
   \coordinate[label=above:{$(0,1)$}] (01) at (0,\s);
   \coordinate[label=above:{$B_3=(1,1)$}] (11) at (\s,\s);
   \draw[] (0) -- (10) -- (11) -- (01) -- cycle;

   \foreach \i in {1,...,\q} {
   \coordinate[label=above:{$A_{\i}$}] (A\i) at (\s*\i*\r,{\s*(\i*\r+\q*(\b-\r))});
   \coordinate[label=below:{$B_{\i}$}] (B\i) at ({\s*(\r-\b+\i*\b)},{\s*(\r-\b+\i*\b)});
   \coordinate[label=below:{$C_{\i}$}] (C\i) at (\s*\i*\b,\s*\i*\b);
   \coordinate[label=left:{$D_{\i}$}] (D\i) at ({\s*(\r+(\i-1)*\g)},{\s*(\r+(\i-1)*\g+\b)});
   \coordinate[label=above:{$E_{\i}$}] (E\i) at ({\s*(\i*\g)},{\s*(\i*\g+\b)});

   \draw (A\i) -- (B\i);
   \draw (A\i) -- (C\i);
   };
   \coordinate[label=left:{$E_0$}] (E0) at (0,\s*\b);
   \coordinate[label=above:{$D_3$}] (D3) at ({\s*(1-\b)},\s);
   \foreach \i in {A,B,C,D,E} {
   \foreach \j in {1,...,\q} {
   \fill (\i\j) circle(1pt);

   };
   };
   };
   };
   };
   };
   \fill (E0) circle(1pt);
   \fill (D3) circle(1pt);
   \fill (0) circle(1pt);
   \fill (10) circle(1pt);
   \fill (01) circle(1pt);
   \fill (11) circle(1pt);
   \draw (0) -- (11);
   \draw (E0) -- (D3);

   \foreach \x in {0,...,2}
 \draw[blue,
    decoration={markings,mark=at position 1 with {\arrow[scale=2,blue]{>}}},
    postaction={decorate},
    shorten >=0.4pt
    ] ($(0)!\x/3!(B1)$) -- ($(E0)!\x/3!(D1)$);
  \foreach \x in {0,...,6}
 \draw[blue,
    decoration={markings,mark=at position 1 with {\arrow[scale=2,blue]{>}}},
    postaction={decorate},
    shorten >=0.4pt
    ] ($(B1)!\x/7!(C1)$) -- ($(D1)!\x/7!(E1)$);
     \foreach \x in {0,...,2}
 \draw[blue,
    decoration={markings,mark=at position 1 with {\arrow[scale=2,blue]{>}}},
    postaction={decorate},
    shorten >=0.4pt
    ] ($(C1)!\x/3!(B2)$) -- ($(E1)!\x/3!(D2)$);
     \foreach \x in {0,...,6}
 \draw[blue,
    decoration={markings,mark=at position 1 with {\arrow[scale=2,blue]{>}}},
    postaction={decorate},
    shorten >=0.4pt
    ] ($(B2)!\x/7!(C2)$) -- ($(D2)!\x/7!(E2)$);
     \foreach \x in {0,...,3}
 \draw[blue,
    decoration={markings,mark=at position 1 with {\arrow[scale=2,blue]{>}}},
    postaction={decorate},
    shorten >=0.4pt
    ] ($(C2)!\x/3!(11)$) -- ($(E2)!\x/3!(D3)$);
 };

\end{tikzpicture}
\caption{Schematic illustration of the ordering for Case a).}\label{fig:ub1}
\end{figure}
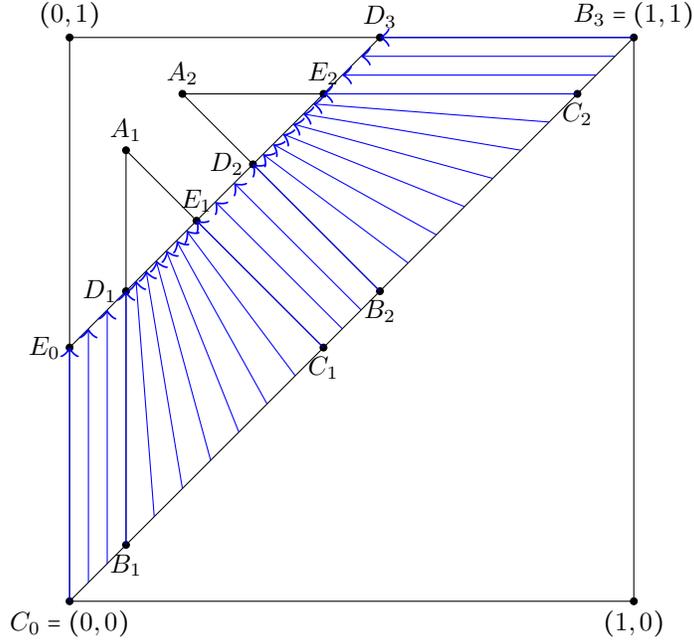

Now we discuss $V_{n,k}(B_iC_iE_iD_i)$. Here we work with polar coordinates in the coordinate system with origin $A_i$ and $x$-axis in the direction of $\overrightarrow{A_iB_i}$ and arbitrary, but fixed unit length. For $X \in V_{n,k}(B_iC_iE_iD_i)$ let $\varphi_i(X)$ and $r_i(X)$ be the angular and radial coordinates of $\frac{1}{n} (\ul{X},\ol{X})$ in this coordinate system. The ordering is a lexicographic ordering with respect to the \emph{reflected} polar coordinates, i.e.,
for $X,Y \in V_{n,k}(B_iC_iE_iD_i)$ and $(\ul{X},\ol{X})\neq (\ul{Y},\ol{Y})$ we set
\[
X \le_i Y \text{ if } (\varphi_i(X),-r_i(X)) \le_{lex} (\varphi_i(Y),-r_i(Y)),
\]
see Figure \ref{fig:ub1}.

Note that for simpler numerical computations $\varphi_i(X)$ may be enlarged to an angle such that one leg is parallel to the $y$-axis, the size of the angle may be replaced by $\tan(\varphi_i(X))$ and the Euclidean norm for $r_i(X)$ may be replaced by some other norm, e.g. the $L_1$-norm.

It is easy to check that
\begin{equation}
\label{1016}
X \le Y \text{ implies } \ul{X} \le \ol{Y}.
\end{equation}

\begin{lem}
\label{1021}
Let $f$ be the numbering for Case a). Then, for $n \rightarrow \infty$,
\[
B_f(G) \lesssim c_1(\beta,k)n^k.
\]
\end{lem}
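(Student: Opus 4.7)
The plan is to bound, for each adjacent pair $X\le Y$, the quantity $f_{spo}(Y)-f_{spo}(X)=|\{Z\in\V:X\le Z\le Y\}|$ asymptotically by $\mu(R(X,Y))n^k$ via Corollary \ref{cor2}, where $R(X,Y)\subseteq\Omega$ is the polygonal region of SPO positions between $X$ and $Y$, and then to verify $\mu(R(X,Y))\le c_1(\beta,k)$ for every adjacent pair. By Lemma \ref{164}, adjacency reads $\max(y_X,y_Y)-\min(x_X,x_Y)\le\beta+o(1)$ in normalized coordinates $(x,y)=\frac{1}{n}(\ul{\cdot},\ol{\cdot})$; a quick check shows this forces $X$ and $Y$ to lie in polygons of the ordering sequence $P_0^C,P_1^B,P_1^C,\ldots,P_q^C$ (writing $P_i^C:=C_iB_{i+1}D_{i+1}E_i$ and $P_i^B:=B_iC_iE_iD_i$) at most two positions apart. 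Greater separations force one of $X,Y$ onto a vanishingly small boundary piece, and the bulk polygons strictly between $X$ and $Y$ still have total $\mu$-measure at most $c_1$, so these residual configurations are harmless.

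For $X,Y$ in the same polygon or in a consecutive pair, $R(X,Y)$ is contained in those polygons and the required bound is an arithmetic consequence of Corollary \ref{574}: $\mu(P_i^B)+\mu(P_i^C)=c_1$ and $\mu(P_i^C)+\mu(P_{i+1}^B)=c_1$ handle the consecutive-pair cases, while $\mu(P_i^C)=(q+1)c_2-qc_1\le c_1$ is equivalent to $c_2\le c_1$, which rearranges to $r\le[k(q-1)+1]/[k(q^2+q-1)+1]$ -- strictly weaker than the Case a) hypothesis $r\le(q-1)/(q^2+q-1)$ -- and $\mu(P_i^B)=(q+1)(c_1-c_2)\le c_1$ is equivalent to the always-valid $q\beta\le 1$.

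The first substantial case is $X\in P_i^C$, $Y\in P_{i+1}^C$ (skipping $P_{i+1}^B$). In the coordinates $u=(1-i/q)x+(i/q)y$, $v=y-x$ that linearize the SPO on each $P_j^C$, each $P_j^C$ becomes a rectangle with unit Jacobian, so the tail of $P_i^C$ (with $u\ge u_X$) and the head of $P_{i+1}^C$ (with $u\le u_Y$) have combined $\mu$-measure $(u_Y-u_X+r-\beta)\beta^{k-1}/(k-1)!$. Direct maximization of $u_Y-u_X$ under the adjacency constraints -- the optimum occurs at $y_X=x_X$ and $y_Y=x_Y=x_X+\beta$ -- yields $u_Y-u_X\le\beta$, whence the combined tail and head is at most $\mu(P_i^C)$ and therefore $\mu(R)\le\mu(P_i^C)+\mu(P_{i+1}^B)=c_1$.

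The main obstacle is the analogous case $X\in P_i^B$, $Y\in P_{i+1}^B$ (skipping $P_i^C$), because the SPO inside each $P_j^B$ is by polar angle from the apex $A_j$ and is not directly expressible in $(u,v)$-coordinates. I parametrize each point of $P_j^B$ by the $x$-coordinate $x_b$ of the intersection of the line through $A_j$ and the point with the diagonal $y=x$; since the polar angle at $A_j$ is monotone in $x_b$, this parametrization respects the SPO. The change of variables from $(x,y)$ to $(x_b,v)$ on the trapezoid has Jacobian $1-v/D$ with $D=q(\beta-r)=y_{A_j}-x_{A_j}$, so $\mu(\textup{tail}(P_i^B))=(i\beta-x_{b,X})L$ and $\mu(\textup{head}(P_{i+1}^B))=(x_{b,Y}-i\beta-r)L$ for the common factor $L=\int_0^\beta v^{k-2}(1-v/D)\,dv/(k-2)!$, while $\mu(P_i^B)=(\beta-r)L$. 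The desired $\mu(R)\le c_1$ therefore reduces to the single geometric inequality $x_{b,Y}-x_{b,X}\le\beta$. From the explicit formula $x_{b,X}=ir+(x_X-ir)D/(D-v_X)$ with $x_X\ge ir$ I immediately get $x_{b,X}\ge x_X$, and maximizing $x_{b,Y}$ over the adjacency constraint $x_Y+v_Y\le x_X+\beta$ yields $x_{b,Y}\le x_X+\beta$; the optimum is attained at $v_Y=0$ by a sign analysis that is valid precisely because $i\le q-1$ in Case a). Combining the two inequalities closes the proof.
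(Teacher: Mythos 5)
Your proposal is correct, but it takes a genuinely different route from the paper's proof. The paper avoids any case analysis on which polygons $X$ and $Y$ occupy: to every foot point $P=(\xi,\xi)$ on the diagonal it associates a point $\hat{P}$ on $g_{y=x+\beta}$ via the sweep direction of the ordering at $P$, shows that every $Z$ with $X\le Z\le Y$ lands in the single region $P_{\ul{X}}P_{\ol{Y}}\hat{P}_{\ol{Y}}\hat{P}_{\ul{X}}$, and then uses translation invariance (Corollary \ref{883}) together with the intercept theorem to see that any such region whose diagonal feet are $\beta$ apart has measure exactly $\mu(B_iC_iE_iD_i)+\mu(C_iB_{i+1}D_{i+1}E_i)=c_1(\beta,k)$, independently of where the feet sit; the adjacency defect $\ol{Y}-\ul{X}\le b=\beta n+o(n)$ is absorbed by the auxiliary point $P'$ and an $O(\varepsilon)$ term. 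You instead enumerate the admissible polygon placements of $X$ and $Y$ (same, consecutive, two apart, plus the degenerate three-apart configurations) and bound tail and head measures directly in adapted coordinates: the unit-Jacobian $(u,v)$ system on the parallelograms $C_iB_{i+1}D_{i+1}E_i$ and the fan coordinates $(x_b,v)$ with Jacobian $1-v/D$ on the quadrangles $B_iC_iE_iD_i$, followed by an explicit optimization under the adjacency constraint. I checked the computations ($u'_Y-u_X\le\beta$ using $i+1\le q$; $x_{b,X}\ge x_X$ and $x_{b,Y}\le x_X+\beta$ via the monotonicity analysis with $w\le D$) and they are sound; your approach buys transparency about exactly where the Case a) hypothesis enters, at the cost of bookkeeping and a separate treatment of the boundary configurations, while the paper's single measure identity settles all configurations at once. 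Three minor points: the unproved assertion $x_X\ge ir$ (and the positivity of $D-v_X$) is equivalent to $\gamma\ge r$, i.e.\ precisely to $r\le\frac{q-1}{q^2+q-1}$, so it deserves an explicit line; the numbering here is the Case a) ordering of Section \ref{1122}, not the simple palindrome ordering of Section \ref{sec3}, so the notation $f_{spo}$ is misleading; and the idealized constraint $\ol{Y}-\ul{X}\le\beta n$ should carry the $o(n)$ correction, which adds only an $O(\varepsilon)$ to each measure and vanishes in the limit.
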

\proof
Let $P=(\xi,\xi)$ be any point on the segment $\overline{C_0B_{q+1}}$, i.e., $0 \le \xi \le 1$.
With $P$ we associate a new point $\hat{P}$ as follows:
If $P \in \overline{C_iB_{i+1}}$ for some $i$ then let $\hat{P}$ be the intersection point of the line $g_{y=x+\beta}$ with the line through $P$ that is parallel to $C_iE_i$.
If $P \in \overline{B_iC_{i}}$ for some $i$ then let $\hat{P}$ be the intersection point of the line $g_{y=x+\beta}$ with the line through $P$ and $A_i$, see Figure \ref{fig:ub2}.

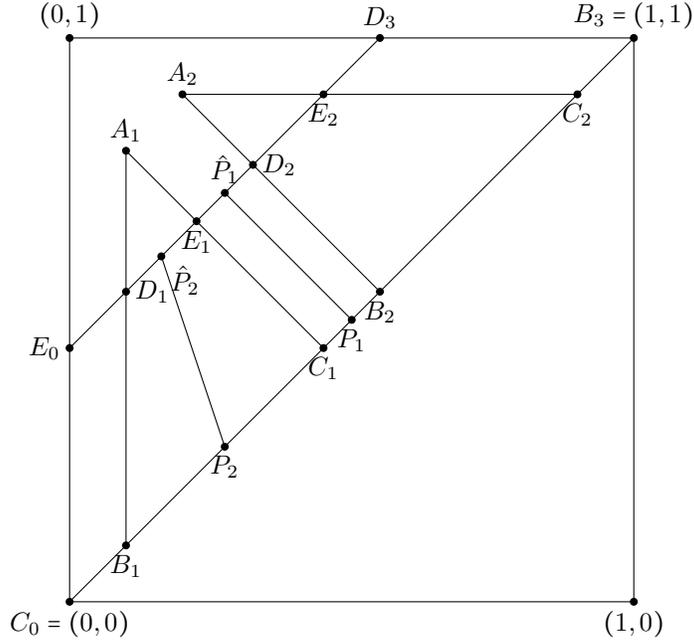
\begin{figure}[H]
\centering
\begin{tikzpicture}[scale=1.5]
   \foreach \s in {5} {
   \foreach \b in {0.45}{
   \foreach \q in {2} {
   \foreach \r in {0.1} {
   \foreach \g in {\b*(1-1/\q)} {

   \coordinate[label=below:{$C_0=(0,0)$}] (0) at (0,0);
   \coordinate[label=below:{$(1,0)$}] (10) at (\s,0);
   \coordinate[label=above:{$(0,1)$}] (01) at (0,\s);
   \coordinate[label=above:{$B_3=(1,1)$}] (11) at (\s,\s);
   \draw[] (0) -- (10) -- (11) -- (01) -- cycle;

   \foreach \i in {1,...,\q} {
   \coordinate[label=above:{$A_{\i}$}] (A\i) at (\s*\i*\r,{\s*(\i*\r+\q*(\b-\r))});
   \coordinate[label=below:{$B_{\i}$}] (B\i) at ({\s*(\r-\b+\i*\b)},{\s*(\r-\b+\i*\b)});
   \coordinate[label=below:{$C_{\i}$}] (C\i) at (\s*\i*\b,\s*\i*\b);
   \coordinate[label=right:{$D_{\i}$}] (D\i) at ({\s*(\r+(\i-1)*\g)},{\s*(\r+(\i-1)*\g+\b)});
   \coordinate[label=below:{$E_{\i}$}] (E\i) at ({\s*(\i*\g)},{\s*(\i*\g+\b)});

   \draw (A\i) -- (B\i);
   \draw (A\i) -- (C\i);
   };
   \coordinate[label=left:{$E_0$}] (E0) at (0,\s*\b);
   \coordinate[label=above:{$D_3$}] (D3) at ({\s*(1-\b)},\s);
   \foreach \i in {A,B,C,D,E} {
   \foreach \j in {1,...,\q} {
   \fill (\i\j) circle(1pt);

   };
   };
   };
   };
   };
   };
   \coordinate[label=below:$P_2$] (P2) at ($(B1)!0.5!(C1)$);
   \coordinate[label=below:$P_1$] (P1) at ($(C1)!0.5!(B2)$);
   \coordinate[label=-20:$\hat{P}_2$] (Ph2) at
(intersection of A1--P2 and D1--E1);
\coordinate[label=above:$\hat{P}_1$] (Ph1) at
(intersection of 01--P1 and E1--D2);
   \fill (P1) circle(1pt);
   \fill (P2) circle(1pt);
   \fill (Ph1) circle(1pt);
   \fill (Ph2) circle(1pt);
   \fill (E0) circle(1pt);
   \fill (D3) circle(1pt);
   \fill (0) circle(1pt);
   \fill (10) circle(1pt);
   \fill (01) circle(1pt);
   \fill (11) circle(1pt);
   \draw (0) -- (11);
   \draw (E0) -- (D3);
   \draw (Ph1) -- (P1);
   \draw (Ph2) -- (P2);

 };

\end{tikzpicture}
\caption{Important points for the upper bound for Case a).}\label{fig:ub2}
\end{figure}

Moreover, for $X \in  \V$ let
\begin{equation}
\label{1281}
P_{\ul{X}}=\frac{1}{n}(\ul{X},\ul{X}) \text{ and } P_{\ol{X}}=\frac{1}{n}(\ol{X},\ol{X}).
\end{equation}
Let $X,Y \in \V$ with $X \le Y$.
By the definition of the ordering and in view of \eqref{1016},
\[
f(Y)-f(X) \le |V_{n,k}(P_{\ul{X}}P_{\ol{Y}}\hat{P}_{\ol{Y}}\hat{P}_{\ul{X}})|.
\]
Thus we have to prove that
\begin{equation}
\label{1042}
|V_{n,k}(P_{\ul{X}}P_{\ol{Y}}\hat{P}_{\ol{Y}}\hat{P}_{\ul{X}})| \lesssim c_1(\beta,k)n^k.
\end{equation}

Note that
\begin{equation}
\label{1038}
\overrightarrow{B_iB_{i+1}}=\overrightarrow{C_iC_{i+1}}=(\beta,\beta).
\end{equation}
Since $X$ and $Y$ are adjacent we have by Lemma \ref{164}, $\ol{Y}-\ul{X} \le b \sim \beta n$.
Let $\varepsilon > 0$. Then $\frac{1}{n}(\ol{Y}-\ul{X}) \le \beta+\varepsilon$ for sufficiently large $n$.
Let
\begin{equation}
\label{1305}
P'=(\frac{1}{n}\ul{X}+\beta,\frac{1}{n}\ul{X}+\beta).
\end{equation}
If $\beta < \frac{1}{n}(\ol{Y}-\ul{X}) \le \beta+\varepsilon$ then $\overrightarrow{P'P_{\ol{Y}}}=(\delta,\delta)$ with $\delta\leq \varepsilon$.
In view of
\[
\mu(P_{\ul{X}}P_{\ol{Y}}\hat{P}_{\ol{Y}}\hat{P}_{\ul{X}})=\mu(P_{\ul{X}}P'\hat{P}'\hat{P}_{\ul{X}})+\mu(P'P_{\ol{Y}}\hat{P}_{\ol{Y}}\hat{P}')
\]
we have
\[
\mu(P_{\ul{X}}P_{\ol{Y}}\hat{P}_{\ol{Y}}\hat{P}_{\ul{X}})\le \mu(P_{\ul{X}}P'\hat{P}'\hat{P}_{\ul{X}})+O(\varepsilon),
\]
which is clearly also true if $\frac{1}{n}(\ol{Y}-\ul{X}) \le \beta$.
Using Corollary \ref{574}, it is easy to check that for all possible $i$,
\begin{equation}\label{1222}
\begin{aligned}
\mu(P_{\ul{X}}P'\hat{P}'\hat{P}_{\ul{X}})&=\mu(B_iB_{i+1}D_{i+1}D_i) = \mu(C_iC_{i+1}E_{i+1}E_i)\\
&=\mu(B_iC_iE_iD_i) + \mu(C_iB_{i+1}D_{i+1}E_i)\\
&=c_1(\beta,k).
\end{aligned}
\end{equation}

An illustration of this fact can be found in Figures \ref{fig:ub3} and \ref{fig:ub4b}.

\begin{figure}[H]
\centering
\begin{tikzpicture}[scale=1.5]
   \foreach \s in {5} {
   \foreach \b in {0.45}{
   \foreach \q in {2} {
   \foreach \r in {0.1} {
   \foreach \g in {\b*(1-1/\q)} {

   \coordinate[label=below:{$C_0=(0,0)$}] (0) at (0,0);
   \coordinate[label=below:{$(1,0)$}] (10) at (\s,0);
   \coordinate[label=above:{$(0,1)$}] (01) at (0,\s);
   \coordinate[label=above:{$B_3=(1,1)$}] (11) at (\s,\s);
   \draw[] (0) -- (10) -- (11) -- (01) -- cycle;

   \foreach \i in {1,...,\q} {
   \coordinate[label=above:{$A_{\i}$}] (A\i) at (\s*\i*\r,{\s*(\i*\r+\q*(\b-\r))});
   \coordinate[label=right:{$B_{\i}$}] (B\i) at ({\s*(\r-\b+\i*\b)},{\s*(\r-\b+\i*\b)});
   \coordinate[label=below:{$C_{\i}$}] (C\i) at (\s*\i*\b,\s*\i*\b);
   \coordinate[label=right:{$D_{\i}$}] (D\i) at ({\s*(\r+(\i-1)*\g)},{\s*(\r+(\i-1)*\g+\b)});
   \coordinate[label=below:{$E_{\i}$}] (E\i) at ({\s*(\i*\g)},{\s*(\i*\g+\b)});

   \draw (A\i) -- (B\i);
   \draw (A\i) -- (C\i);
   };
   \coordinate[label=left:{$E_0$}] (E0) at (0,\s*\b);
   \coordinate[label=above:{$D_3$}] (D3) at ({\s*(1-\b)},\s);
   \foreach \i in {A,B,C,D,E} {
   \foreach \j in {1,...,\q} {
   \fill (\i\j) circle(1pt);

   };
   };
   \coordinate[label=right:$P_{\ul{X}}$] (P1) at ($(0)!0.5!(B1)$);
   \coordinate[label=right:$P^{'}$] (P2) at ($(P1)+({\s*\b},{\s*\b})$);
   \coordinate[label=above:$\hat{P}_{\ul{X}}$] (Ph1) at ($(P1)+(0,{\s*\b})$);
\coordinate[label=above:$\hat{P}^{'}$] (Ph2) at
(intersection of 01--P2 and E1--D2);
   };
   };
   };
   };

   \fill (P1) circle(1pt);
   \fill (P2) circle(1pt);
   \fill (Ph1) circle(1pt);
   \fill (Ph2) circle(1pt);
   \fill (E0) circle(1pt);
   \fill (D3) circle(1pt);
   \fill (0) circle(1pt);
   \fill (10) circle(1pt);
   \fill (01) circle(1pt);
   \fill (11) circle(1pt);
   \draw (0) -- (11);
   \draw (E0) -- (D3);
   \draw (Ph2) -- (P2);
   \draw (Ph1) -- (P1);

   \filldraw[fill=red!20!white] (P1) -- (B1) -- (D1) -- (Ph1) --  cycle;
   \filldraw[fill=red!20!white] (P2) -- (B2) -- (D2) -- (Ph2) --  cycle;
 };

\end{tikzpicture}
\caption{Illustration of \eqref{1222}, where $P_{\ul{X}} \in \overline{C_iB_{i+1}}$ for some $i$. Both red quadrangles have the same measure by Corollary \ref{883}.}
\label{fig:ub3}
\end{figure}

 \begin{figure}[H]
 \centering
\begin{tikzpicture}[scale=1.5]
   \foreach \s in {5} {
   \foreach \b in {0.45}{
   \foreach \q in {2} {
   \foreach \r in {0.1} {
   \foreach \g in {\b*(1-1/\q)} {

   \coordinate[label=below:{$C_0=(0,0)$}] (0) at (0,0);
   \coordinate[label=below:{$(1,0)$}] (10) at (\s,0);
   \coordinate[label=above:{$(0,1)$}] (01) at (0,\s);
   \coordinate[label=above:{$B_3=(1,1)$}] (11) at (\s,\s);
   \draw[] (0) -- (10) -- (11) -- (01) -- cycle;

   \foreach \i in {1,...,\q} {
   \coordinate[label=above:{$A_{\i}$}] (A\i) at (\s*\i*\r,{\s*(\i*\r+\q*(\b-\r))});
   \coordinate[label=right:{$B_{\i}$}] (B\i) at ({\s*(\r-\b+\i*\b)},{\s*(\r-\b+\i*\b)});
   \coordinate[label=below:{$C_{\i}$}] (C\i) at (\s*\i*\b,\s*\i*\b);
   \coordinate[label=left:{$D_{\i}$}] (D\i) at ({\s*(\r+(\i-1)*\g)},{\s*(\r+(\i-1)*\g+\b)});
   \coordinate[label=above:{$E_{\i}$}] (E\i) at ({\s*(\i*\g)},{\s*(\i*\g+\b)});

   \draw (A\i) -- (B\i);
   \draw (A\i) -- (C\i);
   };
   \coordinate[label=left:{$E_0$}] (E0) at (0,\s*\b);
   \coordinate[label=above:{$D_3$}] (D3) at ({\s*(1-\b)},\s);
   \foreach \i in {A,B,C,D,E} {
   \foreach \j in {1,...,\q} {
   \fill (\i\j) circle(1pt);

   };
   };
   \coordinate[label=right:$P_{\ul{X}}$] (P1) at ($(B1)!0.25!(C1)$);
   \coordinate[label=right:$P^{'}$] (P2) at ($(P1)+({\s*\b},{\s*\b})$);
   \coordinate[label=above:$\hat{P}_{\ul{X}}$] (Ph1) at (intersection of A1--P1 and E0--D3);
\coordinate[label=above:$\hat{P}^{'}$] (Ph2) at
(intersection of A2--P2 and E0--D3);
   };
   };
   };
   };

   \fill (P1) circle(1pt);
   \fill (P2) circle(1pt);
   \fill (Ph1) circle(1pt);
   \fill (Ph2) circle(1pt);
   \fill (E0) circle(1pt);
   \fill (D3) circle(1pt);
   \fill (0) circle(1pt);
   \fill (10) circle(1pt);
   \fill (01) circle(1pt);
   \fill (11) circle(1pt);
   \draw (0) -- (11);
   \draw (E0) -- (D3);
   \draw (Ph2) -- (P2);
   \draw (Ph1) -- (P1);

   \filldraw[fill=red!20!white] (P1) -- (B1) -- (D1) -- (Ph1) --  cycle;
   \filldraw[fill=red!20!white] (P2) -- (B2) -- (D2) -- (Ph2) --  cycle;

 };
 \end{tikzpicture}
 \caption{Illustration of \eqref{1222},  where $P_{\ul{X}} \in \overline{B_iC_i}$ for some $i$. Both red quadrangles have the same measure by Corollary \ref{883} and the intercept theorem.}
\label{fig:ub4b}
 \end{figure}

By Corollary \ref{cor2},
\[
|V_{n,k}(P_{\ul{X}}P_{\ol{Y}}\hat{P}_{\ol{Y}}\hat{P}_{\ul{X}})| \lesssim \left(c_1(\beta,k)+O(\varepsilon)\right)n^k
\]
and with $\varepsilon \rightarrow 0$ we get \eqref{1042}.

{\bf Case b)} $r > \frac{q-1}{q^2+q+1}.$

For this case, we use an ordering similar to Case a), but with different polygons, due to the different location of their defining points.
Let $A_0=(0,q(\beta -r))$, $A_{q+1}=((q+1)r,1))$ and $I=(0,1)$.
Note that the points $A_i, i=0,\dots,q+1$, lie on the line $g_{y=x+q(\beta-r)}$.
We define the total order $\le $ as follows:
\begin{align*}
\V=&V_{n,k}(C_0B_1A_1G_1G_0A_0 \setminus (\ol{A_1B_1} \cup \ol{A_1G_1})) \oplus V_{n,k}(A_1B_1C_1 \setminus \ol{A_1C_1})\\
 &\oplus V_{n,k}(C_1B_2A_2G_2G_1A_1 \setminus (\ol{A_2B_2} \cup \ol{A_2G_2})) \oplus \dots \\
 &\oplus V_{n,k}(C_{q-1}B_qA_qG_qG_{q-1}A_{q-1} \setminus (\ol{A_qB_q}\cup \ol{A_qG_q})) \\
&\oplus V_{n,k}(A_qB_qC_q \setminus \ol{A_qC_q}) \oplus V_{n,k}(C_qB_{q+1}G_{q+1}G_qA_q).
\end{align*}
We still have to define the ordering of the elements of $V_{n,k}(C_iB_{i+1}A_{i+1}G_{i+1}G_iA_i)$, $i=0,\dots,q$, and of $V_{n,k}(A_iB_iC_i)$, $i=1,\dots,q$ (again, we may allow the complete boundary).

If $(\ul{X},\ol{X})=(\ul{Y},\ol{Y})$ we set in all cases $X \le_i Y$ if $X \le_{lex} Y$. Thus let $(\ul{X},\ol{X})\neq (\ul{Y},\ol{Y})$.

First we discuss $V_{n,k}(C_iB_{i+1}A_{i+1}G_{i+1}G_iA_i)$. We divide the hexagon $C_iB_{i+1}A_{i+1}G_{i+1}G_iA_i$ into two quadrangles $C_iB_{i+1}A_{i+1}A_i$ and $A_iA_{i+1}G_{i+1}G_i$, define the corresponding orderings for both quadrangles and then explain how they are combined.

The definition of the ordering of $V_{n,k}(C_iB_{i+1}A_{i+1}A_i)$ is similar to $V_{n,k}(C_iB_{i+1}D_{i+1}E_i)$ of Case a), i.e., for $X,Y \in V_{n,k}(C_iB_{i+1}A_{i+1}A_i)$ and $(\ul{X},\ol{X}) \neq (\ul{Y},\ol{Y})$ we set
\[
X \le _i Y \text{ if } ((1-i/q) \ul{X}+i \ol{X}/q,-\ul{X}+\ol{X}) \le_{lex} ((1-i/q) \ul{Y}+i \ol{Y}/q,-\ul{Y}+\ol{Y}).
\]

Concerning $V_{n,k}(A_iA_{i+1}G_{i+1}G_i)$ we work with polar coordinates $\varphi_i(X)$ and $r_i(X)$ of points $\frac{1}{n}(\ul{X},\ol{X})$, where $X \in V_{n,k}(A_iA_{i+1}G_{i+1}G_i)$, in the coordinate system with origin $I$ and $x$-axis in the direction of $\overrightarrow{I A_i}$ and arbitrary, but fixed unit length. Similarly to $V_{n,k}(B_iC_iE_iD_i)$ in Case a), the ordering is a lexicographic ordering with respect to the reflected polar coordinates, i.e., for $X,Y \in V_{n,k}(A_iA_{i+1}G_{i+1}G_i)$ and $(\ul{X},\ol{X}) \neq (\ul{Y},\ol{Y})$ we set
\[
X \le_i Y \text{ if } (\varphi_i(X),-r_i(X)) \le_{lex} (\varphi_i(Y),-r_i(Y)).
\]
For a point $P\in C_iB_{i+1}A_{i+1}A_i$ let $\tilde{P}$ be the intersection point of the line $g_{y=x+q(\beta-r)}$ with the line through $P$ that is parallel to $C_iA_i$. If, in particular, $P=\frac{1}{n}(\ul{X},\ol{X})$ with $X \in V_{n,k}(C_iB_{i+1}A_{i+1}A_i)$, then let $\tilde{\varphi}(X)$ be the angular coordinate of $\tilde{P}$ in the coordinate system introduced for $V_{n,k}(A_iA_{i+1}G_{i+1}G_i)$.

The combination of the two orderings is as follows: Let $X \in V_{n,k}(C_iB_{i+1}A_{i+1}A_i)$ and $Y \in V_{n,k}(A_iA_{i+1}G_{i+1}G_i)$ and $(\ul{X},\ol{X}) \neq (\ul{Y},\ol{Y})$. We set
\[
X \le _i Y \text{ if } \tilde{\varphi}_i(X) \le \varphi_i(Y).
\]

Finally, we discuss $V_{n,k}(A_iB_iC_i)$. Here the ordering is a lexicographic ordering of the reflected polar coordinates in the coordinate system with origin $A_i$ and $x$-axis in the direction of $\overrightarrow{A_iB_i}$  and arbitrary, but fixed unit length.

The whole ordering is illustrated in Figure \ref{fig:ub5}.

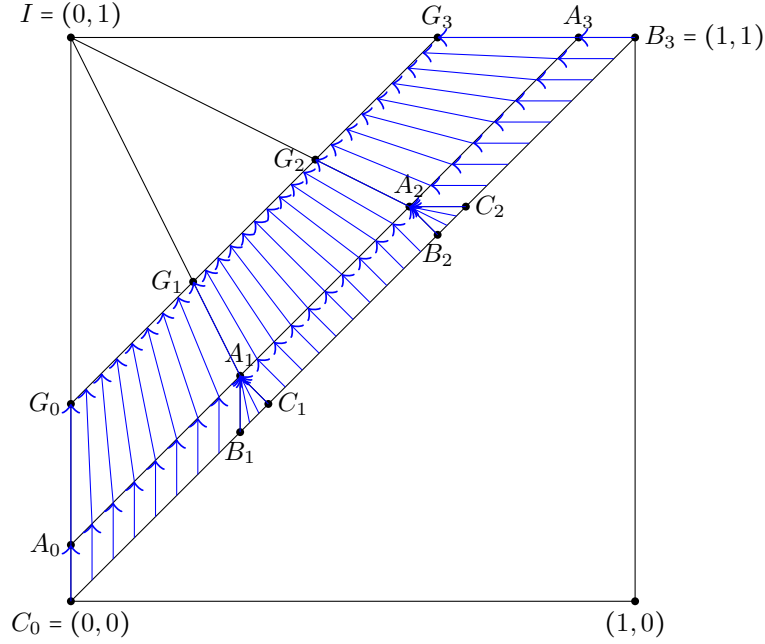
\begin{figure}[H]
\centering
 \begin{tikzpicture}[scale=1.5]
   \foreach \s in {5} {
   \foreach \b in {0.35}{
   \foreach \q in {2} {
   \foreach \r in {0.3} {
   \foreach \g in {\b*(1-1/\q)} {

   \coordinate[label=below:{$C_0=(0,0)$}] (0) at (0,0);
   \coordinate[label=below:{$(1,0)$}] (10) at (\s,0);
   \coordinate[label=above:{$I=(0,1)$}] (01) at (0,\s);
   \coordinate[label=right:{$B_3=(1,1)$}] (11) at (\s,\s);
   \draw[] (0) -- (10) -- (11) -- (01) -- cycle;

   \foreach \i in {1,...,\q} {
   \coordinate[label=above:{$A_{\i}$}] (A\i) at ({\s*\i*\r},{\s*(\i*\r+\q*(\b-\r))});
   \coordinate[label=below:{$B_{\i}$}] (B\i) at ({\s*(\r-\b+\i*\b)},{\s*(\r-\b+\i*\b)});
   \coordinate[label=right:{$C_{\i}$}] (C\i) at ({\s*\i*\b},\s*\i*\b);
  \coordinate[label=left:{$G_{\i}$}] (G\i) at ({\s*(\i*(1-\b)/(\q+1))},{\s*(\i*(1-\b)/(\q+1)+\b)});

   \draw (A\i) -- (B\i);
   \draw (A\i) -- (C\i);
   \draw (01) -- (A\i);
   };
   \coordinate[label=left:{$G_{0}$}] (G0) at (0,\s*\b);
   \coordinate[label=above:{$G_{3}$}] (G3) at ({\s*(1-\b)},\s);
   \coordinate[label=left:{$A_{0}$}] (A0) at (0,{\s*(\q*(\b-\r))});
   \coordinate[label=above:{$A_{3}$}] (A3) at ({\s*(\q+1)*\r},{\s*((\q+1)*\r+\q*(\b-\r))});
   \foreach \i in {A,B,C,G} {
   \foreach \j in {1,...,\q} {
   \fill (\i\j) circle(1pt);

   };
   };
   };
   };
   };
   };
   \fill (G0) circle(1pt);
   \fill (A0) circle(1pt);
   \fill (A3) circle(1pt);
   \fill (G3) circle(1pt);
   \fill (0) circle(1pt);
   \fill (10) circle(1pt);
   \fill (01) circle(1pt);
   \fill (11) circle(1pt);
   \draw (0) -- (11);
   \draw (A0) -- (A3);
   \draw (G0) -- (G3);

   \foreach \x in {0,...,7}
 \draw[blue,
    decoration={markings,mark=at position 1 with {\arrow[scale=2,blue]{>}}},
    postaction={decorate},
    shorten >=0.4pt
    ] ($(0)!\x/8!(B1)$) -- ($(A0)!\x/8!(A1)$);

    \foreach \x in {0,...,7}
 \draw[blue,
    decoration={markings,mark=at position 1 with {\arrow[scale=2,blue]{>}}},
    postaction={decorate},
    shorten >=0.4pt
    ] ($(A0)!\x/8!(A1)$) -- ($(G0)!\x/8!(G1)$);

    \foreach \x in {0,...,2}
   \draw[blue,
    decoration={markings,mark=at position 1 with {\arrow[scale=2,blue]{>}}},
    postaction={decorate},
    shorten >=0.4pt
    ] ($(B1)!\x/3!(C1)$) -- (A1);

    \foreach \x in {0,...,9}
   \draw[blue,
    decoration={markings,mark=at position 1 with {\arrow[scale=2,blue]{>}}},
    postaction={decorate},
    shorten >=0.4pt
    ] ($(C1)!\x/10!(B2)$) -- ($(A1)!\x/10!(A2)$);

 \foreach \x in {0,...,9}
   \draw[blue,
    decoration={markings,mark=at position 1 with {\arrow[scale=2,blue]{>}}},
    postaction={decorate},
    shorten >=0.4pt
    ] ($(A1)!\x/10!(A2)$) -- ($(G1)!\x/10!(G2)$);

    \foreach \x in {0,...,2}
   \draw[blue,
    decoration={markings,mark=at position 1 with {\arrow[scale=2,blue]{>}}},
    postaction={decorate},
    shorten >=0.4pt
    ] ($(B2)!\x/3!(C2)$) -- (A2);

     \foreach \x in {0,...,8}
   \draw[blue,
    decoration={markings,mark=at position 1 with {\arrow[scale=2,blue]{>}}},
    postaction={decorate},
    shorten >=0.4pt
    ] ($(C2)!\x/8!(11)$) -- ($(A2)!\x/8!(A3)$);

    \foreach \x in {0,...,8}
   \draw[blue,
    decoration={markings,mark=at position 1 with {\arrow[scale=2,blue]{>}}},
    postaction={decorate},
    shorten >=0.4pt
    ] ($(A2)!\x/8!(A3)$) -- ($(G2)!\x/8!(G3)$);

 };
\end{tikzpicture}
\caption{Schematic illustration of the ordering for Case b).}\label{fig:ub5}
\end{figure}

\begin{lem}
\label{1454}
Let $f$ be the numbering for Case b). Then
\[
B_f(G) \lesssim (c_2(\beta,k)+c_3(\beta,k))n^k.
\]
\end{lem}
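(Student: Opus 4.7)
The plan is to mirror the template of Lemma \ref{1021}. For each edge $\{X,Y\}$ with $X \le Y$ in the Case b) ordering $f$, I would exhibit a polygonal region $R(X,Y) \subseteq \Omega$ such that $\{Z \in \V : X \le Z \le Y\} \subseteq V_{n,k}(R(X,Y))$, and then bound $\mu(R(X,Y)) \le c_2(\beta,k) + c_3(\beta,k) + O(\varepsilon)$ uniformly. Combined with Corollary \ref{cor2} and $\varepsilon \to 0$, this gives $B_f(\G) \lesssim (c_2 + c_3)n^k$.

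The first step is to establish the monotonicity analog of \eqref{1016}, $X \le Y \Rightarrow \ul X \le \ol Y$, by inspecting each of the three sub-orderings (the $M_i$-lex ordering on the lower quadrangle $C_iB_{i+1}A_{i+1}A_i$, the $I$-based polar ordering on the upper quadrangle $A_iA_{i+1}G_{i+1}G_i$, and the $A_i$-based polar ordering on the triangle $A_iB_iC_i$) together with the glue rules between them. Next I would fix $\varepsilon > 0$ and, for an adjacent pair $X,Y$, use Lemma \ref{164} to get $\ol Y - \ul X \le b \le (\beta+\varepsilon)n$ for large $n$. For each diagonal point $P = (\xi,\xi)$, I would define a sweep-projection $\hat P \in g_{y=x+\beta}$ along the ordering direction of whichever polygon contains $P$: parallel to $\overrightarrow{C_iA_i}$ inside a lower quadrangle, through $A_i$ inside a triangle, and via the $I$-based polar ray once the sweep has entered the upper quadrangle. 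With $P_{\ul X}, P_{\ol Y}, P'$ defined as in \eqref{1281} and \eqref{1305}, the definitions of the sub-orderings force $\{Z : X \le Z \le Y\} \subseteq V_{n,k}(P_{\ul X} P_{\ol Y} \hat{P}_{\ol Y} \hat{P}_{\ul X})$. Exactly as in Lemma \ref{1021}, replacing $P_{\ol Y}$ by $P' = P_{\ul X}+(\beta,\beta)$ costs only $O(\varepsilon)$ in measure, so it suffices to bound $\mu(P_{\ul X} P' \hat P' \hat P_{\ul X})$.

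The core of the argument is then to show $\mu(P_{\ul X} P' \hat P' \hat P_{\ul X}) \le c_2(\beta,k) + c_3(\beta,k)$ for every $P_{\ul X}$ on $\overline{C_0B_{q+1}}$, paralleling \eqref{1222}. Corollary \ref{883} lets me translate $P_{\ul X}$ along the diagonal without changing the measure, so it is enough to handle one representative configuration in each type ($P_{\ul X} \in \overline{C_iB_{i+1}}$ versus $P_{\ul X} \in \overline{B_iC_i}$). The resulting region decomposes into an upper-quadrangle slab and a triangle sliver; by Corollary \ref{574} the upper-quadrangle slab has measure exactly $c_2(\beta,k)$ (matching $\mu(F_iF_{i+1}G_{i+1}G_i)$ after translation), while the triangle sliver has measure at most $c_3(\beta,k)$, where the Case b) hypothesis $r>(q-1)/(q^2+q-1)$, equivalent to $q(\beta-r)<\beta$, is what controls the geometry of the sweep through the triangle via $A_i$. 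Applying Corollary \ref{cor2} and letting $\varepsilon \to 0$ yields the lemma.

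The main obstacle is this measure computation. The quadrilateral $P_{\ul X} P' \hat P' \hat P_{\ul X}$ has a piecewise-linear boundary whose breakpoints depend on where the hat-sweep direction changes, so different positions of $P_{\ul X}$ produce qualitatively different decompositions of the region into pieces living in hexagons and triangles. Verifying uniformly that the total measure is bounded by $c_2 + c_3$, rather than by something larger involving multiple triangle crossings or a full triangle of measure $(q+1)c_3$, should follow from careful application of Corollary \ref{883} together with the intercept theorem (as in Figure \ref{fig:ub4b}) and from the congruence of consecutive triangle--hexagon pairs along the diagonal, but this is where the bulk of the casework lies.
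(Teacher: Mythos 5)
Your proposal follows the paper's proof essentially step for step: the same monotonicity property, the same two-stage sweep projection $\hat P$ (parallel to $\overrightarrow{C_iA_i}$ up to the line through the $A_i$'s, then radially from $I$ to $g_{y=x+\beta}$), the same $O(\varepsilon)$ reduction to the polygon $P_{\ul X}P'\hat P'\hat P_{\ul X}$, and the same telescoping evaluation $\mu(F_iF_{i+1}G_{i+1}G_i)+\mu(A_{i+1}F_{i+1}C_{i+1})-\mu(A_iF_iC_i)=c_2(\beta,k)+c_3(\beta,k)$ via Corollaries \ref{883} and \ref{574}. The casework you defer is exactly the part the paper itself relegates to the figures, so the two arguments coincide.
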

\proof
Let $P=(\xi,\xi)$ be any point on the segment $\overline{C_0B_{q+1}}$, i.e., $0 \le \xi \le 1$.
With $P$ we associate a new point $\hat{P}$ as follows:
If $P \in \overline{C_iB_{i+1}}$ for some $i$, i.e., $P$ belongs to the quadrangle $C_iB_{i+1}A_{i+1}A_i$, then we already defined $\tilde{P}$. The point $\hat{P}$ is the intersection point of the line $g_{y=x+\beta}$ with the line through $\tilde{P}$ and $I$.
If $P \in \overline{B_iC_{i}}$ for some $i$ then let $\hat{P}$ be the intersection point of the line $g_{y=x+\beta}$ with the line through $I$ and $A_i$, see Figure \ref{fig:ub6}.

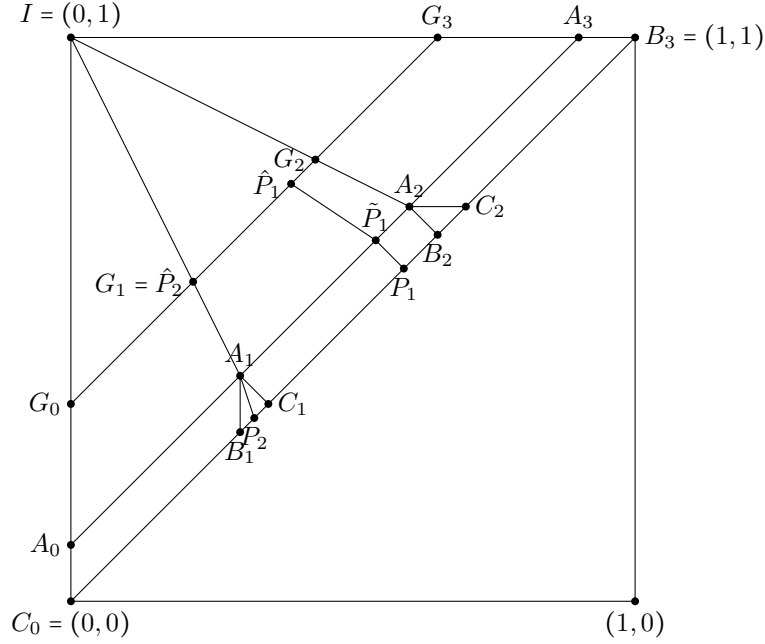
\begin{figure}[H]
\centering
 \begin{tikzpicture}[scale=1.5]
   \foreach \s in {5} {
   \foreach \b in {0.35}{
   \foreach \q in {2} {
   \foreach \r in {0.3} {
   \foreach \g in {\b*(1-1/\q)} {

   \coordinate[label=below:{$C_0=(0,0)$}] (0) at (0,0);
   \coordinate[label=below:{$(1,0)$}] (10) at (\s,0);
   \coordinate[label=above:{$I=(0,1)$}] (01) at (0,\s);
   \coordinate[label=right:{$B_3=(1,1)$}] (11) at (\s,\s);
   \coordinate[label=left:{$G_{1}=\hat{P}_2$}] (G1) at ({\s*(1*(1-\b)/(\q+1))},{\s*(1*(1-\b)/(\q+1)+\b)});
   \coordinate[label=left:{$G_{2}$}] (G2) at ({\s*(2*(1-\b)/(\q+1))},{\s*(2*(1-\b)/(\q+1)+\b)});
   \draw[] (0) -- (10) -- (11) -- (01) -- cycle;

   \foreach \i in {1,...,\q} {
   \coordinate[label=above:{$A_{\i}$}] (A\i) at ({\s*\i*\r},{\s*(\i*\r+\q*(\b-\r))});
   \coordinate[label=below:{$B_{\i}$}] (B\i) at ({\s*(\r-\b+\i*\b)},{\s*(\r-\b+\i*\b)});
   \coordinate[label=right:{$C_{\i}$}] (C\i) at ({\s*\i*\b},\s*\i*\b);

   \draw (A\i) -- (B\i);
   \draw (A\i) -- (C\i);
   \draw (01) -- (A\i);
   };
   \coordinate[label=left:{$G_{0}$}] (G0) at (0,\s*\b);
   \coordinate[label=above:{$G_{3}$}] (G3) at ({\s*(1-\b)},\s);
   \coordinate[label=left:{$A_{0}$}] (A0) at (0,{\s*(\q*(\b-\r))});
   \coordinate[label=above:{$A_{3}$}] (A3) at ({\s*(\q+1)*\r},{\s*((\q+1)*\r+\q*(\b-\r))});
   \foreach \i in {A,B,C,G} {
   \foreach \j in {1,...,\q} {
   \fill (\i\j) circle(1pt);

   };
   };

   };
   };
   };
   };
   \coordinate[label=below:$P_2$] (P2) at ($(B1)!0.5!(C1)$);
   \coordinate[label=below:$P_1$] (P1) at ($(C1)!0.8!(B2)$);
   \coordinate (P1h) at ($(P1)+(A2)-(B2)$);
   \coordinate[label=above:$\tilde{P}_1$] (Pt1) at
(intersection of P1--P1h and A1--A2);
\coordinate[label=left:$\hat{P}_1$] (Ph1) at
(intersection of 01--Pt1 and G1--G2);
   \fill (P1) circle(1pt);
   \fill (P2) circle(1pt);
   \fill (Pt1) circle(1pt);
   \fill (Ph1) circle(1pt);

   \fill (G0) circle(1pt);
   \fill (A0) circle(1pt);
   \fill (A3) circle(1pt);
   \fill (G3) circle(1pt);
   \fill (0) circle(1pt);
   \fill (10) circle(1pt);
   \fill (01) circle(1pt);
   \fill (11) circle(1pt);
   \draw (0) -- (11);
   \draw (A0) -- (A3);
   \draw (G0) -- (G3);
   \draw (P1) -- (Pt1);
   \draw (Pt1) -- (Ph1);
   \draw (P2) -- (A1);
 };
\end{tikzpicture}
\caption{Important points for the upper bound for Case b).}\label{fig:ub6}
\end{figure}

For $X \in \V$ we define $P_{\ul{X}}$ and $P_{\ol{X}}$ as in \eqref{1281} and, analogously to Case a), have to prove that for $n \rightarrow \infty$
\begin{equation}
\label{1043}
|V_{n,k}(P_{\ul{X}}P_{\ol{Y}}\hat{P}_{\ol{Y}}\hat{P}_{\ul{X}})| \lesssim (c_2(\beta,k)+c_3(\beta,k))n^k.
\end{equation}
We define $P'$ as in \eqref{1305}. With the same arguments as for Case a) it is sufficient to prove that
\[
\mu(P_{\ul{X}}P'\hat{P'}\hat{P}_{\ul{X}})=c_2(\beta,k)+c_3(\beta,k).
\]
Using Lemma \ref{574}, one can verify that for all possible $i$,
\begin{equation}\label{1541}
\begin{aligned}
\mu(P_{\ul{X}}P'\hat{P'}\hat{P}_{\ul{X}})&=\mu(B_iB_{i+1}A_{i+1}G_{i+1}G_iA_i) = \mu(C_iC_{i+1}A_{i+1}G_{i+1}G_iA_i)\\
&=\mu(F_iF_{i+1}G_{i+1}G_i) + \mu(A_{i+1}F_{i+1}C_{i+1})-\mu(A_{i}F_{i}C_{i})\\
&=c_2(\beta,k)+c_3(\beta,k).
\end{aligned}
\end{equation}
An illustration of this fact can be found in Figures \ref{fig:ub7a} and \ref{fig:ub7b}.

\begin{figure}[H]
\centering
\begin{tikzpicture}[scale=1.5]
   \foreach \s in {5} {
   \foreach \b in {0.35}{
   \foreach \q in {2} {
   \foreach \r in {0.3} {
   \foreach \g in {\b*(1-1/\q)} {

   \coordinate[label=below:{$C_0=F_0=(0,0)$}] (0) at (0,0);
   \coordinate[label=below:{$(1,0)$}] (10) at (\s,0);
   \coordinate[label=above:{$I=(0,1)$}] (01) at (0,\s);
   \coordinate[label=right:{$B_3=F_3=(1,1)$}] (11) at (\s,\s);
   \coordinate[label=left:{$G_{1}=\hat{P}_2$}] (G1) at ({\s*(1*(1-\b)/(\q+1))},{\s*(1*(1-\b)/(\q+1)+\b)});
   \coordinate[label=left:{$G_{2}$}] (G2) at ({\s*(2*(1-\b)/(\q+1))},{\s*(2*(1-\b)/(\q+1)+\b)});
   \draw[] (0) -- (10) -- (11) -- (01) -- cycle;

   \foreach \i in {1,...,\q} {
   \coordinate[label=left:{$A_{\i}$}] (A\i) at ({\s*\i*\r},{\s*(\i*\r+\q*(\b-\r))});
   \coordinate[label=below:{$B_{\i}$}] (B\i) at ({\s*(\r-\b+\i*\b)},{\s*(\r-\b+\i*\b)});
   \coordinate[label=right:{$C_{\i}$}] (C\i) at ({\s*\i*\b},\s*\i*\b);
   \coordinate[label=-45:{$F_{\i}$}] (F\i) at ({\s*(\i/(\q+1))},{\s*(\i/(\q+1))});

   \draw (A\i) -- (B\i);
   \draw (A\i) -- (C\i);
   \draw (01) -- (F\i);
   };
   \coordinate[label=left:{$G_{0}$}] (G0) at (0,\s*\b);
   \coordinate[label=above:{$G_{3}$}] (G3) at ({\s*(1-\b)},\s);
   \coordinate[label=left:{$A_{0}$}] (A0) at (0,{\s*(\q*(\b-\r))});
   \coordinate[label=above:{$A_{3}$}] (A3) at ({\s*(\q+1)*\r},{\s*((\q+1)*\r+\q*(\b-\r))});

 \coordinate[label=right:$P_{\ul{X}}$] (P1) at ($(0)!0.2!(B1)$);
   \coordinate[label=right:$P^{'}$] (P2) at ($(P1)+({\s*\b},{\s*\b})$);
   \coordinate (P1h) at ($(P1)+(A1)-(B1)$);
   \coordinate (Pt1) at
(intersection of A0--A3 and P1--P1h);
\coordinate[label=above:$\hat{P}_{\ul{X}}$] (Ph1) at
(intersection of 01--Pt1 and G0--G1);
\coordinate (P2h) at ($(P2)+(A2)-(B2)$);
   \coordinate (Pt2) at
(intersection of A0--A3 and P2--P2h);
\coordinate[label=above:$\hat{P}^{'}$] (Ph2) at
(intersection of 01--Pt2 and G1--G2);
   };
   \filldraw[fill=red!20!white] (0) -- (A0) -- (Pt1) -- (P1) --  cycle;
   \filldraw[fill=red!20!white] (C1) -- (P2) -- (Pt2) -- (A1) --  cycle;
   \filldraw[fill=blue!20!white] (A0) -- (G0) -- (Ph1) -- (Pt1) -- cycle;
   \filldraw[fill=blue!20!white] (A1) -- (G1) -- (Ph2) -- (Pt2) -- cycle;

   \foreach \i in {A,B,C,F,G} {
   \foreach \j in {1,...,\q} {
   \fill (\i\j) circle(1pt);

   };
   };
   };
   };
   };

   \fill (P1) circle(1pt);
   \fill (P2) circle(1pt);
   \fill (Pt1) circle(1pt);
   \fill (Ph1) circle(1pt);
   \fill (Pt2) circle(1pt);
   \fill (Ph2) circle(1pt);

   \fill (G0) circle(1pt);
   \fill (A0) circle(1pt);
   \fill (A3) circle(1pt);
   \fill (G3) circle(1pt);
   \fill (0) circle(1pt);
   \fill (10) circle(1pt);
   \fill (01) circle(1pt);
   \fill (11) circle(1pt);
   \draw (0) -- (11);
   \draw (A0) -- (A3);
   \draw (G0) -- (G3);
   \draw (P1) -- (Pt1);
   \draw (Pt1) -- (Ph1);
   \draw (P2) -- (Pt2);
   \draw (Pt2) -- (Ph2);

 };
\end{tikzpicture}
\caption{Illustration of \eqref{1541}, where $P_{\ul{X}} \in \overline{C_iB_{i+1}}$ for some $i$. Both red and both blue quadrangles have the same measure by Corollary \ref{883}  and the intercept theorem.}\label{fig:ub7a}
\end{figure}

\begin{figure}[H]
\centering
 \begin{tikzpicture}[scale=1.5]
   \foreach \s in {5} {
   \foreach \b in {0.35}{
   \foreach \q in {2} {
   \foreach \r in {0.3} {
   \foreach \g in {\b*(1-1/\q)} {

   \coordinate[label=below:{$C_0=(0,0)$}] (0) at (0,0);
   \coordinate[label=below:{$(1,0)$}] (10) at (\s,0);
   \coordinate[label=above:{$I=(0,1)$}] (01) at (0,\s);
   \coordinate[label=right:{$B_3=(1,1)$}] (11) at (\s,\s);
   \coordinate[label=left:{$G_{1}=\hat{P}_{\ul{X}}$}] (G1) at ({\s*(1*(1-\b)/(\q+1))},{\s*(1*(1-\b)/(\q+1)+\b)});
   \coordinate[label=right:{$G_{2}=\hat{P}^{'}$}] (G2) at ({\s*(2*(1-\b)/(\q+1))},{\s*(2*(1-\b)/(\q+1)+\b)});
   \draw[] (0) -- (10) -- (11) -- (01) -- cycle;

   \foreach \i in {1,...,\q} {
   \coordinate[label=above:{$A_{\i}$}] (A\i) at ({\s*\i*\r},{\s*(\i*\r+\q*(\b-\r))});
   \coordinate[label=below:{$B_{\i}$}] (B\i) at ({\s*(\r-\b+\i*\b)},{\s*(\r-\b+\i*\b)});
   \coordinate[label=right:{$C_{\i}$}] (C\i) at ({\s*\i*\b},\s*\i*\b);

   \draw (A\i) -- (B\i);
   \draw (A\i) -- (C\i);
   \draw (01) -- (A\i);
   };
   \coordinate[label=left:{$G_{0}$}] (G0) at (0,\s*\b);
   \coordinate[label=above:{$G_{3}$}] (G3) at ({\s*(1-\b)},\s);
   \coordinate[label=left:{$A_{0}$}] (A0) at (0,{\s*(\q*(\b-\r))});
   \coordinate[label=above:{$A_{3}$}] (A3) at ({\s*(\q+1)*\r},{\s*((\q+1)*\r+\q*(\b-\r))});

\coordinate[label=-45:$P_{\ul{X}}$] (P1) at ($(B1)!0.5!(C1)$);
   \coordinate[label=-45:$P^{'}$] (P2) at ($(P1)+({\s*\b},{\s*\b})$);
   \filldraw[fill=red] (B1) -- (P1) -- (A1) --  cycle;
   \filldraw[fill=red] (B2) -- (P2) -- (A2) --  cycle;
   \foreach \i in {A,B,C,G} {
   \foreach \j in {1,...,\q} {
   \fill (\i\j) circle(1pt);

   };
   };
   };
   };
   };
   };

   \fill (P1) circle(1pt);
   \fill (P2) circle(1pt);

   \fill (G0) circle(1pt);
   \fill (A0) circle(1pt);
   \fill (A3) circle(1pt);
   \fill (G3) circle(1pt);
   \fill (0) circle(1pt);
   \fill (10) circle(1pt);
   \fill (01) circle(1pt);
   \fill (11) circle(1pt);
   \draw (0) -- (11);
   \draw (A0) -- (A3);
   \draw (G0) -- (G3);
   \draw (P1) -- (A1);
   \draw (P2) -- (A2);
 };
\end{tikzpicture}
\caption{Illustration of \eqref{1541}, where $P_{\ul{X}} \in \overline{B_iC_i}$ for some $i$. Both red quadrangles have the same measure by Corollary \ref{883}.}\label{fig:ub7b}
\end{figure}

\section{Open problems}
We formulate the following conjecture in form of a problem because we are rather convinced that it is correct.
\begin{prob}
Prove that the ordering for Case b) presented in Section \ref{1122} and illustrated in Figure \ref{fig:ub5} defines an asymptotically optimal bandwidth numbering.
\end{prob}
A larger program is formulated in the second problem:
\begin{prob}
Find and study other interesting graph classes that allow a reduction to the unit square for the asymptotics and lead to interesting and non-trivial orderings on the unit square.
\end{prob}

\section*{Acknowledgement}
We are grateful for the financial support of the \emph{Deutsche Forschungsgemeinschaft} for the DFG graduate school 1505/2 {\it welisa}.

\bibliographystyle{elsarticle-num}

\end{document}